\newcommand{\nc}{\newcommand}
\nc{\dmo}{\DeclareMathOperator}
\dmo{\ra}{\rightarrow}
\dmo{\N}{\mathbb{N}}
\dmo{\F}{\mathbb{F}}
\dmo{\Z}{\mathbb{Z}}
\dmo{\R}{\mathbb{R}}
\dmo{\C}{\mathcal{C}}
\dmo{\AC}{\mathcal{AC}}
\dmo{\Mod}{Mod}
\dmo{\PMod}{PMod}
\dmo{\B}{B}
\dmo{\PB}{PB}
\dmo{\GL}{GL}
\dmo{\SL}{SL}
\dmo{\Sp}{Sp}
\dmo{\I}{\mathcal{I}}
\dmo{\el}{\ell_{\C}}
\dmo{\NN}{\mathcal{N}}
\dmo{\Tr}{Tr}
\dmo{\rk}{rk}
\dmo{\Aut}{Aut}
\dmo{\Inn}{Inn}
\dmo{\Teich}{Teich}
\dmo{\Ind}{Ind}
\dmo{\cd}{cd}
\dmo{\forget}{Forget}
\dmo{\Homeo}{Homeo}
\dmo{\Out}{Out}
\dmo{\MCG}{MCG}
\dmo{\Diffeo}{Diffeo}
\dmo{\push}{Push}
\dmo{\capd}{Cap}
\dmo{\CG}{CG}
\dmo{\UCG}{UCG}
\dmo{\FCG}{FCG}
\dmo{\PFB}{PFB}
\dmo{\BG}{B}
\dmo{\PBG}{PB}
\dmo{\NRH}{NRH}
\dmo{\st}{st}
\dmo{\lk}{lk}
\nc{\nt}{\newtheorem}
\newtheorem{thm}{{\bf Theorem}}[section]
\newtheorem{mainthm}{Theorem}
\newtheorem{maincor}{Corollary}
\newtheorem{lem}[thm]{{\bf Lemma}}
\newtheorem{cor}[thm]{{\bf Corollary}}
\newtheorem{prop}[thm]{{\bf Proposition}}
\theoremstyle{remark}
\newtheorem{remark}[thm]{Remark}
\newtheorem*{ques}{Question}
\theoremstyle{definition}
\newtheorem{definition}[thm]{Definition}
\numberwithin{equation}{section}
\newtheorem*{rep@theorem}{\rep@title}
\newcommand{\newreptheorem}[2]{%
\newenvironment{rep#1}[1]{%
 \def\rep@title{#2 \ref{##1}}%
 \begin{rep@theorem}}%
 {\end{rep@theorem}}}
\title[Acylindrical hyperbolicity of Out(RAAG)]
{Acylindrical hyperbolicity of outer automorphism groups of right-angled Artin groups}
\author{Hyungryul Baik}
\address{%
        Department of Mathematical Sciences, KAIST,
		291 Daehak-ro Yuseong-gu, Daejeon, 34141, South Korea
}
\email{%
        hrbaik@kaist.ac.kr
}
\author{Junseok Kim}
\address{%
        Department of Mathematics,
        Technion - Israel Institute of Technology,
        Technion City, Haifa, Israel, 3200003
}
\email{%
        jsk8818@campus.technion.ac.il
}
\begin{document}
\maketitle
\begin{abstract}
 We study the acylindrical hyperbolicity of the outer automorphism group of a right-angled Artin group $A_\Gamma$. When the defining graph $\Gamma$ has no SIL-pair (separating intersection of links), we obtain a necessary and sufficient condition for $\Out(A_\Gamma)$ to be acylindrically hyperbolic. As a corollary, if $\Gamma$ is a random connected graph satisfying a certain probabilistic condition, then $\Out(A_\Gamma)$ is not acylindrically hyperbolic with high probability. When $\Gamma$ has a maximal SIL-pair system, we derive a classification theorem for partial conjugations. Such a classification theorem allows us to show that the acylindrical hyperbolicity of $\Out(A_\Gamma)$ is closely related to the existence of a specific type of partial conjugations.
\end{abstract}
\section{Introduction}
 Given a finite simplicial graph $\Gamma$ whose vertex set and edge set are $V$ and $E$, respectively, the right-angled Artin group (RAAG) $A_\Gamma$ associated with $\Gamma$ is defined by the following group presentation: $$A_\Gamma = \langle v \in V \mid [u, v] = 1 ~\text{for}~\{u, v\} \in E \rangle.$$
In these settings, $\Gamma$ is said to be the defining graph of $A_\Gamma$.

RAAGs can be considered to be a family of groups which interpolate the free groups and free abelian groups. Their outer automorphism groups are of interest of many authors. $\Out(\F_n)$ has been particularly studied a lot. Given that the extended mapping class group $\MCG^{\pm}(S_g)$ of a closed surface $S_g$ is isomorphic to $\Out(\pi_1(S_g))$, $\Out(\F_n)$ naturally share many properties of the mapping class group of punctured surfaces but has a rather rich structure.

Our main focus here is $\Out(A_\Gamma)$. As we mentioned already, two extreme ends of examples of $A_\Gamma$ are free groups $\F_n$ and free abelian group $\Z^n$. $\Out(\F_n)$ and $\GL_n(\Z) (=\Out(\Z^n))$ already share some nice properties. They both are virtually torsion-free, residually finite, and have finite virtual cohomological dimension. Charney-Vogtmann showed in \cite{charney2009finiteness} and \cite{charney2011subgroups} that these properties hold for $\Out(A_\Gamma)$, and the algorithm in \cite{day2021calculating} gave a complete answer to the virtual cohomological dimension of $\Out(A_\Gamma)$.

Recently, there has been significant research activity to understand plenty of group theoretic properties of $\Out(A_\Gamma)$, for example, Kazhdan's property (T) and virtual duality. However, the understanding of when $\Out(A_\Gamma)$ has property (T) or is a duality group is still incomplete. \cite{wade2023note} and \cite{sale2024virtual} provide partial answers to these questions.

It is also true that both $\Out(\F_n)$ and $\GL_n(\Z) $ are non-relatively hyperbolic. The second author with Sangrok Oh and Philippe Tranchida \cite{kim2022automorphism} studied relative hyperbolicity of $\Out(A_\Gamma)$. They showed that unless $\Out(A_\Gamma)$ is either finite or virtually a RAAG $A_\Lambda$ where $\Lambda$ is either a single vertex or disconnected, $\Out(A_\Gamma)$ is never relatively hyperbolic.

But the story becomes more mysterious when it comes to acylindrical hyperbolicity. First of all, $\Out(\F_n)$ is acylindrincally hyperbolic for all $n \geq 2$ but $\GL_n(\Z)$ is not acylindrincally hyperbolic for all $n \geq 3$. Hence it is a curious question how acylindrical hyperbolicity of $\Out(A_\Gamma)$ behaves for different $\Gamma$. Indeed Genevois asked the following question with a complete classification of acylindrical hyperbolicity of $\Aut(A_\Gamma)$ in \cite{genevois2018automorphisms}.

\begin{ques}
    When is $\Out(A_\Gamma)$ acylindrincally hyperbolic? Can we classify such $\Gamma$'s?
\end{ques}

Despite our classification for certain families of graphs, determining the exact status of acylindrical hyperbolicity for general graphs remains an intriguing open challenge. For instance, it would be highly interesting to determine the status of $\Out(A_\Gamma)$ when $\Gamma$ possesses SIL-pairs but fails to admit a maximal SIL-pair system (e.g., the graphs constructed by Wiedmer in \cite{wiedmer2024right}, which will be discussed at the end of Section~\ref{sec:AH}).

Besides from the desire to understand how $\Out(A_\Gamma)$ ``sees" the similarity and difference between $\Out(\F_n)$ and $\GL_n(\Z)$, this is a question of its own interest, since acylindrically hyperbolic groups have many nice properties such as SQ-universality. \cite{osin2016acylindrically} and \cite{hagen2019beyond} would assist in understanding the notion of acylindrical hyperbolicity.

In this paper, we provide a partial answer to this question. First we obtain a complete answer to this question in the case $\Gamma$ does not contain a so-called SIL-pair. A SIL-pair is a pair of non-adjacent vertices $a, b$ which separate the graph in a reasonable sense. Namely, there must be another vertex which is separated from $a, b$ by the intersection of the links of $a, b$(see Definition \ref{def:SIL}). Then the following theorem can be obtained easily from the work of Guirardel-Sale \cite{guirardel2018vastness}.

\begin{mainthm} (Corollary \ref{nosilpair}) \label{mainthmA}
Suppose $\Gamma$ has no SIL-pair. Then $\Out(A_\Gamma)$ is acylindrically hyperbolic if and only if the complement of the star of each vertex is connected and there exist two equivalent vertices $a, b$ such that if $v \leq w$ for some vertices $v,w\in \Gamma$, then $\{v, w\} = \{a, b\}$
\end{mainthm}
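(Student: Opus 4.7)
The plan is to combine the structural results of Guirardel-Sale for SIL-free defining graphs with the standard obstructions to acylindrical hyperbolicity coming from direct products and central subgroups.

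Step one is to reduce $\Out(A_\Gamma)$ to a manageable direct product via the Laurence-Servatius generating set (graph automorphisms, inversions, partial conjugations $\pi_v^C$ for components $C$ of $\Gamma \setminus \st(v)$, and transvections $R_w^v$ for dominations $v \leq w$). Under the no-SIL hypothesis, partial conjugations commute with each other and with all transvections they do not interact with, and transvections belonging to different equivalence classes of dominations commute as well. Modulo graph automorphisms and inversions, which contribute only a finite quotient, a finite-index subgroup of $\Out(A_\Gamma)$ should then be isomorphic to a direct product of the form
\[
  \left(\prod_{[v]} \GL_{|[v]|}(\Z)\right) \times \Z^{m} \times \Z^{k},
\]
where $[v]$ ranges over equivalence classes of mutually dominating vertices, $m$ is the number of strict (non-symmetric) dominations in $\Gamma$, and $k$ counts the extra connected components of $\Gamma \setminus \st(v)$ summed over all $v$. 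This decomposition is essentially what the Guirardel-Sale analysis yields in the no-SIL setting.

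Step two is to translate acylindrical hyperbolicity of this product into graph conditions. A direct product of infinite groups is acylindrically hyperbolic if and only if at most one factor fails to be virtually cyclic and that factor is itself acylindrically hyperbolic; in particular any infinite central direct factor rules out acylindrical hyperbolicity. The $\Z^{k}$ summand vanishes iff every $\Gamma \setminus \st(v)$ is connected; the $\Z^{m}$ summand vanishes iff there are no strict dominations; and $\GL_n(\Z)$ is acylindrically hyperbolic iff $n = 2$ (in which case it is virtually free, while for $n \geq 3$ it is a higher-rank arithmetic lattice and hence not acylindrically hyperbolic). Putting these constraints together forces exactly one equivalence class of size $2$, no strict dominations, and every star-complement connected, which is precisely the stated condition.

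The main obstacle is justifying the direct-product decomposition of step one to the desired level of precision, particularly the commutation of partial conjugations with transvections across disjoint supports; this is exactly where the SIL-pair hypothesis bites, so it is the point at which one must appeal to Guirardel-Sale carefully. Once the decomposition is in hand, the remainder is Osin's elementary dichotomy for direct products together with the acylindrical hyperbolicity of $\GL_2(\Z)$.
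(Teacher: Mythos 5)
Your overall strategy --- feeding the no-SIL structure theory of Guirardel--Sale into Osin's obstructions --- is the same as the paper's, and your final list of graph conditions is correct, but the structural claim in your first step is a genuine gap, not a technicality. A finite-index subgroup of $\Out(A_\Gamma)$ does \emph{not} in general decompose as a direct product $\bigl(\prod_{[v]}\GL_{|[v]|}(\Z)\bigr)\times\Z^{m}\times\Z^{k}$. What Guirardel--Sale actually provide is a short exact sequence $1\to P\to\Out^*(A_\Gamma)\to\prod_i\SL_{n_i}(\Z)\to 1$ whose kernel $P$ is finitely generated \emph{nilpotent}, generated by the partial conjugations and the transvections $R_{ab}$ with $a\leq b$ but $a\not\sim b$. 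This kernel need not be abelian (for a strict chain $a\leq b\leq c$ the transvections $R_{ab}$ and $R_{bc}$ do not commute), and the extension does not split as a direct product: conjugating $R_{ab}$ by $R_{bb'}$ for $b'\sim b$ yields an element of the form $R_{ab}R_{ab'}$, so the $\SL_{n}(\Z)$ factor attached to $[b]$ does not centralize $P$. The commutation ``across disjoint supports'' you invoke simply fails for transvections along a domination chain. A second, smaller error: your criterion for acylindrical hyperbolicity of a direct product is wrong as stated --- $\Z\times\F_2$ has only one non-virtually-cyclic factor, which is acylindrically hyperbolic, yet $\Z\times\F_2$ is not, since an acylindrically hyperbolic group cannot be a product of two infinite groups. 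Your ``in particular'' clause is the correct statement and is the one you actually apply, so the conclusion survives, but the criterion would not.

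Both defects are repaired by arguing as the paper does: if $\Out(A_\Gamma)$ is acylindrically hyperbolic then so is its infinite normal finite-index subgroup $\Out^*(A_\Gamma)$; an acylindrically hyperbolic group has no infinite amenable (in particular, no infinite finitely generated nilpotent) normal subgroup, so $P$ must be trivial --- which is precisely the statement that every $\Gamma-\st(x)$ is connected and every domination is symmetric; then $\Out^*(A_\Gamma)\cong\prod_i\SL_{n_i}(\Z)$, and the product obstruction together with the failure of acylindrical hyperbolicity for $\SL_n(\Z)$, $n\geq 3$, forces a single equivalence class of size $2$ and all others of size $1$. For the converse you should also be explicit that one cannot promote acylindrical hyperbolicity from a finite-index subgroup to the ambient group in general; the paper instead observes that $\Out^*(A_\Gamma)\cong\SL_2(\Z)$ makes $\Out(A_\Gamma)$ virtually nonabelian free, hence hyperbolic and non-elementary, hence acylindrically hyperbolic.
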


For two distinct vertices $v$ and $w$, the partial order $v \leq w$ in Theorem \ref{mainthmA}, called \textit{link-star order}, means $\lk(v) \subset \st(w)$, and $a$ and $b$ are equivalent if and only if $a \leq b$ and $b \leq a$. See Section~\ref{sec:prelim} for details.
Theorem~\ref{mainthmA} has an interesting consequence for random RAAGs(i.e., RAAGs with random defining graphs). Let $\Gamma(n, p)$ denote the Erd\"os-R\'enyi random graph with $n$ vertices and probability $p$ for each edge, where $p=p(n)$ may depend on $n$. We say $\Gamma(n, p)$ has some property with high probability if the probability for the given property to hold approaches to 1 as $n$ approaches to infinity. For background on random graph theory, we refer the reader to the monographs \cite{bollobas2001random, janson2000random}.

It is worth placing this result in the context of the works by Charney--Farber \cite{charney2012random} and Day \cite{day2012finiteness}, who also studied $\Out(A_\Gamma)$ for random graphs $\Gamma$ in the Erd\"os-R\'enyi model. They showed that for any fixed edge probability $p\in (1-1/\sqrt{2},1)$ (independent of $n$), the link-star partial order on the vertices of $\Gamma(n,p)$ is trivial with high probability (see \cite[Theorem 5.1]{charney2012random}), and consequently $\Out(A_\Gamma)$ is finite with high probability. A finite group is trivially not acylindrically hyperbolic, so Corollary~\ref{maincor1} below is already known in the regime where $p$ is a fixed constant in $(1-1/\sqrt{2},1)$. The significance of Corollary~\ref{maincor1} is that it extends to the regime where $p=p(n)$ depends on $n$. In particular, when $np(1-p)<\log n+\log\log n$ (e.g., $1-p\sim\log\log n/n$), transvections of infinite order exist with high probability and $\Out(A_\Gamma)$ is therefore infinite with high probability, yet still fails to be acylindrically hyperbolic. The extreme case $qn=O(1)$ (i.e., $\Gamma$ is very close to the complete graph) is not covered by the present statement.
\begin{maincor} \label{maincor1}
Let $\Gamma=\Gamma(n,p)$ and let $q=1-p$. Assume
\[
p \,\geq\, \frac{\log n+2\log\log n+\omega(n)}{n}\quad\text{with}\quad \omega(n)\to\infty,
\qquad\text{and}\qquad qn\to\infty.
\]
Then $\Out(A_\Gamma)$ is not acylindrically hyperbolic with high probability.
\end{maincor}
The proof of Corollary~\ref{maincor1} is given in Section~\ref{sec:proof-main-cor}. Throughout, we write $f(n)=o(g(n))$ to mean $f(n)/g(n)\to 0$ as $n\to\infty$, and $f(n)=O(g(n))$ to mean that there exist constants $C>0$ and $n_0$ such that $|f(n)|\le C|g(n)|$ for all $n\ge n_0$, following the standard asymptotic notation in random graph theory (see, e.g., \cite{bollobas2001random}).

The case $\Gamma$ contains a SIL-pair is much more complicated and a complete classification is out of reach at the moment. But when $\Gamma$ contains a maximal SIL-pair system, we give a full classification of the partial conjugations in Proposition \ref{classofpartialconj}. In some cases, we can understand the structure of the pure symmetric outer automorphism group $\mathrm{PSO}(A_{\Gamma})$ which is the image of the subgroup of $\Aut(A_\Gamma)$ generated by partial conjugations under the quotient map $\Aut(A_\Gamma) \to \Out(A_\Gamma)$. As a result, we obtain the following.

\begin{mainthm} (Theorem \ref{thm:structureofPSO} + Theorem \ref{beingAH}) \label{mainthmB}
Suppose $\Gamma$ has a maximal SIL-pair system with no additional components (such as a component $D_1$ in Figure \ref{examplesofdecompositions} or components $D_1, D_2$ in Figure \ref{twoadditionalcomponents}).
\begin{enumerate}
\item If there are no nontrivial subordinate partial conjugations (type (3) partial conjugations from the classification in Proposition \ref{classofpartialconj}), then $\mathrm{PSO}(A_{\Gamma})$ is a direct product of two subgroups. In particular, if there are no nontrivial subordinate partial conjugations and no transvections, then $\Out(A_\Gamma)$ is not acylindrically hyperbolic.
\item
Suppose every vertex in simultaneously shared components (such as components $C_i$'s in Figure \ref{examplesofdecompositions} and Figure \ref{twoadditionalcomponents}) either has the star whose complement is connected or defines a subordinate partial conjugation. Then under some minor assumption (this is conjectured to be always satisfied. See Corollary \ref{beingAHcor} and a discussion before it.), $\mathrm{PSO}(A_\Gamma)$ is acylindrically hyperbolic.
\end{enumerate}
\end{mainthm}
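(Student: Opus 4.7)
The plan is to handle parts (1) and (2) separately, both leveraging the classification of partial conjugations from Proposition~\ref{classofpartialconj} together with the decomposition of $\Gamma$ forced by a maximal SIL-pair system with no additional components.

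For part (1), the strategy is structural. Under the hypothesis, every generator of $\mathrm{PSO}(A_\Gamma)$ is of type (1) or type (2) in the classification. Because the SIL-pairs in the maximal system genuinely separate $\Gamma$ across the intersection of their links, the supports of these generators sort into two families $P_1, P_2$ lying on opposite sides of the separation determined by the shared components $C_i$. The key computational step is to check, using the standard relations that partial conjugations with disjoint or properly nested supports commute in $\Out(A_\Gamma)$ modulo inner automorphisms, that $\langle P_1 \rangle$ and $\langle P_2 \rangle$ commute; this yields $\mathrm{PSO}(A_\Gamma) \cong \langle P_1 \rangle \times \langle P_2 \rangle$, with both factors infinite because each side carries at least one nontrivial partial conjugation. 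Since a direct product of two infinite groups is not acylindrically hyperbolic by Osin's characterization, and since without nontrivial transvections $\Out(A_\Gamma)$ contains $\mathrm{PSO}(A_\Gamma)$ as a subgroup of finite index (the quotient being controlled by graph symmetries and inversions), acylindrical hyperbolicity of $\Out(A_\Gamma)$ is ruled out as well.

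For part (2), the plan is to exhibit an action of $\mathrm{PSO}(A_\Gamma)$ on a hyperbolic space with a loxodromic WPD element. I would take a vertex $v$ in a simultaneously shared component $C_i$ guaranteed by the hypothesis. If $\Gamma - \st(v)$ is disconnected, the visual splitting of $A_\Gamma$ along $\st(v)$ gives a Bass--Serre tree $T_v$ on which a carefully chosen product of partial conjugations at the components of $\Gamma - \st(v)$ acts loxodromically; if $v$ instead defines a subordinate partial conjugation, the analogous splitting coming from the subordinate data supplies the tree. The WPD property would then be verified by controlling the coarse stabilizer of a long geodesic axis, which reduces to an explicit description of centralizers of the chosen partial conjugation inside $\mathrm{PSO}(A_\Gamma)$, read off from Proposition~\ref{classofpartialconj}.

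The main obstacle is this WPD verification. It requires a tight description of which elements of $\mathrm{PSO}(A_\Gamma)$ almost commute with the loxodromic partial conjugation, and this is exactly where the \emph{minor assumption} flagged in Corollary~\ref{beingAHcor} enters: it should pin down the last technical relation needed to rule out a pathologically large centralizer. I would try to discharge it by a direct argument using the classification together with the crucial hypothesis that no additional components $D_j$ are present; such components would otherwise contribute partial conjugations commuting with the chosen loxodromic element and degenerate the situation into the direct-product picture of part (1). Throughout both parts, the "no additional components" assumption is what keeps the canonical decomposition visible to $\mathrm{PSO}(A_\Gamma)$ and ultimately dictates which side of the dichotomy applies.
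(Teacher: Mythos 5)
Part (1) has a genuine gap in the proposed decomposition. You claim the generators sort into two commuting families ``lying on opposite sides of the separation determined by the shared components.'' But the dominant partial conjugations do not split along the simultaneously shared components into commuting halves: for $v\in C_i$ and $u\in C_j$ with $\bigcap_i\lk(w_i)\subset\lk(v)\cap\lk(u)$, the component $C_j$ is dominating for $\Gamma-\st(v)$ and $C_i$ is dominating for $\Gamma-\st(u)$, so $[P_v^{C_j},P_u^{C_i}]\neq 1$ by Lemma \ref{lem:commutingrelation}; these two generators live on ``opposite sides'' in any geometric sense, yet cannot be placed in commuting factors. (Also, with $n\geq 3$ components there are not two sides at all, and type (2) generators do not occur here since there are no additional components.) The paper's actual splitting is of a different nature: $N_1$ is generated by the dominant partial conjugations (minus a redundant subset coming from relation (5) of Theorem \ref{poutpresentation}) and $N_2$ by everything else (supports properly inside a single $C_i$, or defined by vertices of $\bigcap_i\lk(w_i)$); commutation is checked generator by generator against Toinet's relations, and triviality of $N_1\cap N_2$ comes from a retraction $\mathrm{PSO}(A_\Gamma)\to N_1$. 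Your final step (Osin's obstruction to direct products plus passing to the finite-index normal subgroup $\Out^*$) is the right one, but it needs both factors infinite, which your decomposition cannot deliver because it does not exist.

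Part (2) rests on a category error: $\mathrm{PSO}(A_\Gamma)$ does not act on the Bass--Serre tree of a visual splitting of $A_\Gamma$ along $\st(v)$ --- that tree carries an action of $A_\Gamma$, not of its (outer) automorphism group, and outer automorphisms need not preserve any such splitting. So the loxodromic WPD element you propose has no ambient space to act on, and the WPD verification cannot even be set up. The paper's route is entirely different: it first proves the semidirect-product structure $\mathrm{PSO}(A_\Gamma)=G\times(H'\rtimes K)$ (Theorem \ref{semidirect}), then uses a surjection to $\mathbb{Z}$ to exhibit $\mathrm{PSO}(A_\Gamma)$ itself as a non-ascending HNN extension with stable letter a dominant partial conjugation $P_{w_1}^{C_3}$, and verifies weak malnormality of the associated subgroup by iterated applications of Britton's lemma with an explicit element $g$; acylindrical hyperbolicity then follows from Minasyan--Osin (Lemma \ref{minasyanosin}). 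The ``minor assumption'' is precisely the injectivity of $\varphi:K\to\Aut(H_3)$, used at the last step to force $gAg^{-1}\cap A$ to be trivial; your guess about its role (controlling centralizers) is in the right spirit, but the hyperbolic space and the entire verification mechanism in your proposal would have to be replaced.
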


Theorem~\ref{mainthmB} shows that when there exists a maximal SIL-pair system with no additional components, the acylindrical hyperbolicity of $\mathrm{PSO}(A_\Gamma)$ is closely related to the existence of subordinate partial conjugations (the two cases in the theorem take care of two extreme situations). We hope to generalize Theorem \ref{mainthmB} for the cases between two cases in the theorem in the future work.

One importance of Theorem~\ref{mainthmB} lies in the fact that understanding the acylindrical hyperbolicity of $\mathrm{PSO}(A_\Gamma)$ is indeed related to that of $\Out(A_{\Gamma})$. In particular, if there is no nontrivial link-star order between two vertices in $\Gamma$ and if $\mathrm{PSO}(A_{\Gamma})$ is not acylindrically hyperbolic, then $\Out(A_\Gamma)$ is not acylindrically hyperbolic as well. However, it is not known whether the acylindrical hyperbolicity of $\mathrm{PSO}(A_\Gamma)$ implies that of $\Out(A_\Gamma)$, even in the case of having finite index subgroup relation. In general, it is conjectured that acylindrical hyperbolicity is a quasi-isometry invariant.

The case with additional components is indicated in Theorem \ref{thm:withaddcpnts}. In this case, if there are $m$ additional components, a finite index subgroup $\Out^*(A_\Gamma)$ of $\Out(A_\Gamma)$ has a normal subgroup, which is the direct product of $m$ copies of a RAAG $A_{\Lambda}$. This also implies non-acylindrical-hyperbolicity of $\Out(A_\Gamma)$ when there are at least two additional components.

Theorem~\ref{mainthmB} can be used to obtain a very explicit infinite family of graphs $\Gamma$ such that $\mathrm{PSO}(A_{\Gamma})$ is acylindrically hyperbolic. To illustrate the examples, for $m\geq 3$, consider a $3m-$gon with vertices labelled $v_1,\ldots,v_{3m}$ and two green vertices outside this polygon. We add edges to connect the vertices labelled $v_{3k}$ (the black vertices in the figure) to the two green vertices, the vertices labelled $v_{3k+1}$ (the red vertices in the figure) to one of the two green vertices, and the vertices labelled $v_{3k+2}$ (the blue vertices in the figure) to the other. The graph obtained in this way is denoted by $\Lambda_m$. For example, the graph on the right in Figure \ref{Fig:Lambda} represents $\Lambda_3$. For $m=2$, the graph $\Lambda_2$ is given on the left in Figure \ref{Fig:Lambda}.

Suppose $p,q,r\geq 2$ are integers. Let $\Gamma(p,q,r)$ be a graph obtained by the following way: consider the disjoint union of three graphs $\Lambda_p$, $\Lambda_q$, and $\Lambda_r$, and identify all the top green vertices as a single point and all the bottom green vertices as a single point. See Figure \ref{Fig:Gamma(2,4,3)} as an example. Now we can apply Theorem \ref{mainthmB} to obtain the following (see Section \ref{subsec:examples} for details).
\begin{maincor} \label{maincor2}
For any integers $p,q,r\geq 2$, $\mathrm{PSO}(A_{\Gamma(p,q,r)})$ is an acylindrically hyperbolic finite index subgroup of $\Out(A_{\Gamma(p,q,r)})$.
\end{maincor}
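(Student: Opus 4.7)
The plan is to apply Theorem \ref{mainthmB}(2) (as sharpened by Corollary \ref{beingAHcor}) to $\Gamma := \Gamma(p,q,r)$ for the acylindrical hyperbolicity of $\mathrm{PSO}(A_{\Gamma})$, and then to check the finite-index claim separately.

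Call the two identified green vertices $g_{1}$ (top) and $g_{2}$ (bottom). In each $\Lambda_{s}$-piece, label the vertices adjacent to both greens as \emph{black}, and the remaining cyclic vertices as \emph{red} (adjacent only to $g_{1}$) or \emph{blue} (adjacent only to $g_{2}$), in the repeating pattern black--red--blue around each $3s$-gon. Since only black vertices contain both $g_{1}$ and $g_{2}$ in their link, any pair of blacks from different $\Lambda$-pieces forms a SIL-pair with link-intersection exactly $\{g_{1},g_{2}\}$, and deleting $\{g_{1},g_{2}\}$ disconnects $\Gamma$ into the three cycles $C_{1}, C_{2}, C_{3}$ (the three $\Lambda$-pieces stripped of their greens). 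I would argue that this realises a maximal SIL-pair system in the sense of the paper, with simultaneously shared components $C_{1}, C_{2}, C_{3}$ and no additional components $D_{i}$, since every vertex of $\Gamma$ is either green or lies in one of the $C_{i}$.

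I would then verify the hypotheses of Theorem \ref{mainthmB}(2) vertex-by-vertex on each $C_{i}$. For a red $r \in C_{i}$, $\Gamma \setminus \st(r)$ is connected because $g_{2}$ is still attached to every remaining blue in all three pieces and the two untouched cycles are joined through $g_{2}$; by symmetry the same holds for every blue vertex through $g_{1}$. For a black $b \in C_{i}$, however, $\Gamma \setminus \st(b)$ breaks into the arc of $C_{i}$ obtained by deleting three consecutive cyclic vertices together with the other two intact cycles, so $b$ fails the connectivity clause. I would then check that the induced partial conjugation is precisely of type (3), i.e.\ subordinate, in the classification of Proposition \ref{classofpartialconj}, by matching its support against the SIL-decomposition rather than only against the component structure of $\Gamma \setminus \st(b)$. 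The minor assumption of Corollary \ref{beingAHcor} can then be verified directly from the explicit structure, yielding the acylindrical hyperbolicity of $\mathrm{PSO}(A_{\Gamma})$.

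For the finite-index claim I would rule out nontrivial link-star orders $v \leq w$ in $\Gamma$. The links $\lk(g_{i})$ are too large to lie in any single star; the link of a black $b$ contains $\{g_{1}, g_{2}\}$, forcing any candidate $w$ to be another black, and the cyclic neighbours in $\lk(b)$ obstruct the inclusion $\lk(b) \subset \st(w)$; for a red or blue vertex the opposing green together with the adjacent black obstructs any such $w$. Hence $\Gamma$ admits no nontrivial transvections, so $\Out(A_{\Gamma})/\mathrm{PSO}(A_{\Gamma})$ is a quotient of the finite group generated by inversions and graph automorphisms, giving finite index. The principal obstacle will be the middle paragraph, namely matching the partial conjugations by black vertices to the subordinate (type (3)) class of Proposition \ref{classofpartialconj} and verifying the minor assumption of Corollary \ref{beingAHcor}; these are the delicate combinatorial inputs that actually unlock Theorem \ref{mainthmB}(2), and everything else is direct bookkeeping in $\Gamma(p,q,r)$.
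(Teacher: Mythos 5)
Your proposal is correct and follows essentially the same route as the paper: exhibit the maximal SIL-pair system with the three polygon pieces as simultaneously shared components and no additional components, check that red/blue vertices have connected star complements while each black vertex defines exactly one subordinate partial conjugation whose support contains all the other blacks, rule out transvections, and invoke Corollary \ref{beingAHcor}. The only item you should add is the remaining hypothesis of Corollary \ref{beingAHcor} that the vertices of $\bigcap_i \lk(w_i)$ (the two green vertices) define only trivial partial conjugations, which is immediate since $\Gamma - \st(g_1)$ consists of $g_2$ together with the blue vertices, all adjacent to $g_2$, and symmetrically for $g_2$.
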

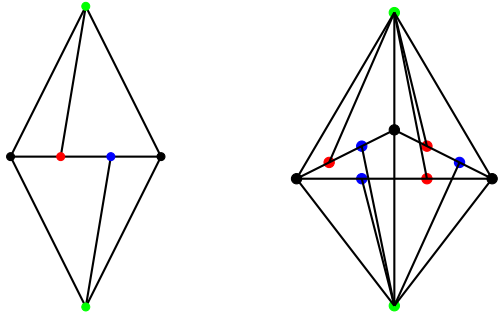
\begin{figure}[ht]
\centering
\begin{tikzpicture}
\draw[black, thick] (1,0) -- (-1,0);
\draw[black, thick] (1,0) -- (0,-2);
\draw[black, thick] (1,0) -- (0,2);
\draw[black, thick] (-1,0) -- (0,-2);
\draw[black, thick] (-1,0) -- (0,2);
\draw[black, thick] (-1/3,0) -- (0,2);
\draw[black, thick] (1/3,0) -- (0,-2);
\filldraw[black] (-1,0) circle (1.5pt);
\filldraw[blue] (1/3,0) circle (1.5pt);
\filldraw[red] (-1/3,0) circle (1.5pt);
\filldraw[black] (1,0) circle (1.5pt);
\filldraw[green] (0,2) circle (1.5pt);
\filldraw[green] (0,-2) circle (1.5pt);
\end{tikzpicture}
\qquad \qquad
\begin{tikzpicture}[scale = 1.3]
\filldraw[green] (0,1.5) circle (1.5pt);
\filldraw[green] (0,-1.5) circle (1.5pt);
\filldraw[black] (0,0.3) circle (1.5pt);
\filldraw[black] (1,-0.2) circle (1.5pt);
\filldraw[black] (-1,-0.2) circle (1.5pt);
\filldraw[red] (1/3,-0.2) circle (1.5pt);
\filldraw[blue] (-1/3,-0.2) circle (1.5pt);
\filldraw[red] (1/3,2/15) circle (1.5pt);
\filldraw[blue] (2/3,-1/30) circle (1.5pt);
\filldraw[red] (-2/3,-1/30) circle (1.5pt);
\filldraw[blue] (-1/3,2/15) circle (1.5pt);
\draw[black, thick] (0,0.3) -- (0,1.5);
\draw[black, thick] (1,-0.2) -- (0,1.5);
\draw[black, thick] (-1,-0.2) -- (0,1.5);
\draw[black, thick] (0,0.3) -- (0,-1.5);
\draw[black, thick] (1,-0.2) -- (0,-1.5);
\draw[black, thick] (-1,-0.2) -- (0,-1.5);
\draw[black, thick] (-1,-0.2) -- (1,-0.2);
\draw[black, thick] (-1,-0.2) -- (0,0.3);
\draw[black, thick] (1,-0.2) -- (0,0.3);
\draw[black, thick] (1/3,-0.2) -- (0,1.5);
\draw[black, thick] (1/3,2/15) -- (0,1.5);
\draw[black, thick] (-2/3,-1/30) -- (0,1.5);
\draw[black, thick] (-1/3,-0.2) -- (0,-1.5);
\draw[black, thick] (2/3,-1/30) -- (0,-1.5);
\draw[black, thick] (-1/3,2/15) -- (0,-1.5);
\end{tikzpicture}
\caption{The graphs $\Lambda_2$ and $\Lambda_3$.}
\label{Fig:Lambda}
\end{figure}

If $\Out(A_{\Gamma(p,q,r)})$ is acylindrically hyperbolic, it gives a partial answer to the Genevois' question. Otherwise, this example is a counterexample to the conjecture that acylindrical hyperbolicity is a quasi-isometry invariant.

The paper is organized as follows: Section \ref{sec:prelim} provides all basic definitions and notations for our discussion. In Section \ref{sec:AH}, we prove Theorem A and study maximal SIL-pair systems of a graph and how it can be used to understand partial conjugations for the corresponding RAAG. In Section \ref{sec:noadditionalcomponents}, we study graphs with no additional components and prove Theorem B.
\subsection{Acknowledgment} We would like to thank Jaehoon Kim for the fruitful conversation which helped the authors to obtain Corollary 1 from Theorem A.

The authors are grateful to Sanghyun Kim, Wonyong Jang, and Sangrok Oh, Richard Wade for the useful discussions and suggestions.
Both authors were supported by the National Research Foundation of Korea (NRF) grant funded by the Korea government (MSIT) (No. 2020R1C1C1A01006912). The second author was also partially supported by Mid-Career Researcher Program (RS-2023-00278510) through the National Research Foundation funded by the government of Korea and by the Israel Science Foundation Grant 1576/23 (PI Nir Lazarovich).

\section{Preliminaries} \label{sec:prelim}
We mostly follow \cite{kim2022automorphism} for the notations. Throughout the paper, $\Gamma$ is always assumed to be a finite simplicial graph with vertex set $V$. We often write $v\in \Gamma$ for a vertex $v$ of $\Gamma$ instead of writing $v \in V$.

For a vertex $v \in \Gamma$, the \textit{link} of $v$, denoted by $\lk(v)$, is the full subgraph of $\Gamma$ spanned by vertices adjacent to $v$.
Similarly, the \textit{star} of $v$, denoted by $\st(v)$, is the full subgraph of $\Gamma$ spanned by vertices adjacent to $v$ and $v$ itself.
For two distinct vertices $v$ and $w$, we then say that $v \leq w$ if $\lk(v) \subset \st(w)$. This partial order induces an equivalence relation on $V$ by setting $v \sim w$ if $v\leq w$ and $w \leq v$.
The partial order then descends to a partial order on the collection of the equivalence classes of vertices by setting $[v] \leq [w]$ if for some, and thus all, representatives $v' \in [v]$ and $w' \in [w]$, we have $v' \leq w'$. A vertex $v \in \Gamma$ is maximal if any vertex $w$ such that $v \leq w$ is equivalent to $v$.

For a subgraph $\Lambda$ of $\Gamma$, the \textit{link} of $\Lambda$, denoted by $\lk(\Lambda)$, is the intersection of the links of all vertices in $\Lambda$. The \textit{star} of $\Lambda$, denoted by $\st(\Lambda)$, is the full subgraph spanned by $\lk(\Lambda)$ and $\Lambda$.

\subsection{Automorphism groups of RAAGs}\label{2.1}

A theorem which was conjectured by Servatius \cite{SERVATIUS} and proved by Laurence \cite{Laurence} says that $\Aut(A_{\Gamma})$ is generated by the following four classes of automorphisms:
\begin{itemize}
    \item \textbf{Graph automorphisms}. An automorphism of $\Gamma$ induces an automorphism of $A_{\Gamma}$ since it preserves the edges of $\Gamma$ and thus the relations of $A_{\Gamma}$. The automorphism obtained this way is called a \textit{graph automorphism}.
    \item \textbf{Inversions}. An automorphism of $A_{\Gamma}$ by sending one generator $v$ to its inverse $v^{-1}$ is called an \textit{inversion}.
    \item \textbf{Transvections}. Take two vertices $v$ and $w$ in $\Gamma$ such that $v \leq w$. Then the automorphism sending $v$ to $vw$ and fixing all the other vertices is called a \textit{right transvection} and is denoted by $R_
    {vw}$. We can similarly define a \textit{left transvection} $L_{vw}$ by sending $v$ to $wv$ and still fixing all the other vertices.
    \item \textbf{Partial conjugations}. For a vertex $v\in\Gamma$, let $C$ be a connected component of $\Gamma - \st(v)$. The automorphism defined by conjugating every vertex in $C$ by $v$ is called a \textit{partial conjugation defined by $v$} and is denoted by $P_v^C$. $C$ is called the \textit{support} of the partial conjugation $P_v^C$. If a component $C$ is of $\Gamma - \st(v)$ is composed of a single vertex $w$, we write $P_v^w$ instead of $P_v^{\{w\}}$.
\end{itemize}
A graph has no transvection if $v\not\leq w$ for any distinct pair of vertices $v$ and $w$ in the graph.

Note that inversions and graph automorphisms have finite order, but transvections and partial conjugations have infinite order.
Let $\Aut^*(A_{\Gamma})$ be the subgroup of $\Aut(A_{\Gamma})$ generated only by transvections and partial conjugations. Then we can easily deduce the following:

\begin{lem}
$\Aut^*(A_{\Gamma})$ is a finite index normal subgroup of $\Aut(A_{\Gamma})$.
\end{lem}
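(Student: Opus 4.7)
The plan is to use the standard strategy: identify the finite subgroup $I \leq \Aut(A_\Gamma)$ generated by all inversions and graph automorphisms, show that $\Aut(A_\Gamma) = I \cdot \Aut^*(A_\Gamma)$, and verify that $I$ normalizes $\Aut^*(A_\Gamma)$ by a direct computation on generators. Combining these, the quotient $\Aut(A_\Gamma)/\Aut^*(A_\Gamma)$ is a quotient of the finite group $I$, hence finite.

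First I would check that $I$ is finite. The inversions mutually commute and form an elementary abelian $2$-group isomorphic to $(\Z/2\Z)^{|V|}$, and graph automorphisms form the finite group $\Aut(\Gamma)$. Since a graph automorphism $\sigma$ sends the inversion $\iota_v$ to $\iota_{\sigma(v)}$, the group they generate is a semidirect product $(\Z/2\Z)^{|V|} \rtimes \Aut(\Gamma)$, which is finite. By Laurence's theorem, $\Aut(A_\Gamma)$ is generated by $I$ together with transvections and partial conjugations, so $\Aut(A_\Gamma) = I \cdot \Aut^*(A_\Gamma)$ as a set once normality is known.

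For normality, it suffices to conjugate each type of generator of $\Aut^*(A_\Gamma)$ by each type of generator of $I$ and check that the result lies in $\Aut^*(A_\Gamma)$. The easy cases are the graph automorphisms: if $\sigma \in \Aut(\Gamma)$, then $\sigma$ preserves the link-star order (since it takes $\lk(v)$ to $\lk(\sigma(v))$ and $\st(v)$ to $\st(\sigma(v))$), and carries connected components of $\Gamma - \st(v)$ to connected components of $\Gamma - \st(\sigma(v))$. A direct evaluation on the generating set of $A_\Gamma$ gives $\sigma R_{vw}\sigma^{-1} = R_{\sigma(v)\sigma(w)}$, $\sigma L_{vw}\sigma^{-1} = L_{\sigma(v)\sigma(w)}$, and $\sigma P_v^C \sigma^{-1} = P_{\sigma(v)}^{\sigma(C)}$. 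For an inversion $\iota_u$, a small case analysis on whether $u$ equals $v$, equals $w$, lies in $C$, or is unrelated shows that $\iota_u R_{vw}\iota_u$ is a right or left transvection (or its inverse) on the same pair $v, w$, and $\iota_u P_v^C \iota_u$ equals $P_v^C$ or its inverse (the inverse occurring exactly when $u=v$). In every case the output lies in $\Aut^*(A_\Gamma)$.

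The only mildly subtle point is to remember that both $R_{vw}$ and $L_{vw}$ are included in the generating set of $\Aut^*(A_\Gamma)$, so conjugating a right transvection by an inversion at $v$ (which produces a left transvection) does not escape the subgroup. Once this case analysis is recorded, normality follows, and then the natural surjection $I \twoheadrightarrow \Aut(A_\Gamma)/\Aut^*(A_\Gamma)$ shows that $\Aut^*(A_\Gamma)$ has finite index.
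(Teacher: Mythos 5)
Your argument is correct and is precisely the standard verification the paper leaves to the reader (the lemma is stated with no proof, only the remark that it "can easily be deduced"): the conjugation formulas $\sigma R_{vw}\sigma^{-1}=R_{\sigma(v)\sigma(w)}$, $\sigma P_v^C\sigma^{-1}=P_{\sigma(v)}^{\sigma(C)}$, $\iota_v R_{vw}\iota_v=L_{vw}^{-1}$, $\iota_w R_{vw}\iota_w=R_{vw}^{-1}$, and $\iota_v P_v^C\iota_v=(P_v^C)^{-1}$ all check out, and the finiteness of $(\Z/2\Z)^{|V|}\rtimes\Aut(\Gamma)$ gives the finite index. No gaps.
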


Under the quotient map $\Aut(A_\Gamma)\rightarrow \Out(A_\Gamma):=\Aut(A_\Gamma)/\Inn(A_\Gamma)$, $\Out(A_\Gamma)$ is generated by the images of graph automorphisms, inversions, transvections, and partial conjugations. Similarly, $\Out^*(A_{\Gamma})$ is defined to be the subgroup generated by the images of transvections and partial conjugations in $\Out(A_\Gamma)$; it can be considered as the image of $\Aut^*(A_{\Gamma})$ in $\Out(A_{\Gamma})$. By the above lemma, $\Out^*(A_{\Gamma})$ is also a finite index normal subgroup of $\Out(A_{\Gamma})$. For $f\in \Aut(A_\Gamma)$, denote $[f]\in\Out(A_\Gamma)$ the image of $f$ via the quotient map.

Similarly, we define $\Aut^0(A_{\Gamma})$ and $\Out^0(A_{\Gamma})$ by subgroups generated by transvections, partial conjugations, and inversions. Since $[v]$ and $\st [v]$ are induced subgraphs of $\Gamma$, we can define $A_{[v]}$ and $A_{\st [v]}$ as subgroups of $A_{\Gamma}$. This leads to the following proposition when $\Gamma$ is connected.

\begin{prop}\cite[Proposition 3.2]{charney2009finiteness}
    Let $\Gamma$ be a connected graph and $[v]$ be a maximal equivalence class. Then any $\phi\in \Out^0(A_{\Gamma})$ has a representative $\phi_v \in \Aut^0(A_{\Gamma})$ which preserves both $A_{[v]}$ and $A_{\st [v]}$.
\end{prop}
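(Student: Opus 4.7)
The plan is to start from an arbitrary representative $\phi_0\in \Aut^0(A_\Gamma)$ of the given outer class $\phi$, and then successively modify it by composing with inner automorphisms and with those generators of $\Aut^0(A_\Gamma)$ that already stabilize the subgroups in question. A key preliminary observation is that every inner automorphism $\iota_g$ lies in $\Aut^0(A_\Gamma)$: conjugation by a single vertex $u$ acts trivially on $\st(u)$, so it equals the product of the partial conjugations $P_u^C$ as $C$ ranges over the connected components of $\Gamma-\st(u)$; extending multiplicatively, any $\iota_g$ is in $\Aut^0(A_\Gamma)$. Therefore all modifications I perform keep us inside $\Aut^0(A_\Gamma)$ and represent the same outer class.

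The first main step is to give an invariant description of $A_{[v]}$ and $A_{\st[v]}$. Since $[v]$ is a \emph{maximal} equivalence class, transvections can never move a generator out of $A_{[v]}$: if $R_{uw}$ is applied with $u\in [v]$, the inequality $u\leq w$ together with maximality forces $w\in [v]$, so the image of $u$ stays in $A_{[v]}$. Moreover, the image of any vertex under any transvection or partial conjugation is conjugate to that vertex, so the conjugacy class $[w]_{A_\Gamma}$ of each vertex $w$ is preserved by every generator of $\Aut^0(A_\Gamma)$. Combining these two facts, after passing to $\Out^0$, the collection of conjugacy classes $\{[w]_{A_\Gamma} : w\in [v]\}$ is invariant, and this will be enough to conjugate $\phi_0$ to one carrying the generating set of $A_{[v]}$ into $A_{[v]}$ itself. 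Concretely, I would pick any representative $\phi_0$, examine where it sends a fixed generator $v_0\in[v]$, locate an element $g\in A_\Gamma$ conjugating $\phi_0(v_0)$ back into $A_{[v]}$, and replace $\phi_0$ by $\iota_g\circ\phi_0$. A small iterative argument over the other vertices of $[v]$ (using that they pairwise commute or satisfy $\lk(v_i)\subset\st(v_j)$ within $[v]$) then produces $\phi_v$ with $\phi_v(A_{[v]})=A_{[v]}$.

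The second step is to upgrade this to preservation of $A_{\st[v]}$. Here the clean algebraic characterization is $A_{\st[v]} = A_{[v]}\cdot Z_{A_\Gamma}(A_{[v]})$, i.e., the product of $A_{[v]}$ with its centralizer in $A_\Gamma$; this follows from the fact that a vertex $w$ commutes with every vertex in $[v]$ iff $w\in\lk([v])$, and $\st[v]=[v]\cup\lk([v])$. Since any automorphism of $A_\Gamma$ respects centralizers, once $\phi_v(A_{[v]})=A_{[v]}$ the map $\phi_v$ automatically sends $Z_{A_\Gamma}(A_{[v]})$ to itself, and hence $A_{\st[v]}$ to itself. A mild care is needed when $[v]$ is not itself abelian (so that $A_{[v]}$ has a nontrivial center contributing to the centralizer), but this only forces a further adjustment by an inner automorphism by an element of $A_{[v]}$, which is again in $\Aut^0(A_\Gamma)$ by the initial observation.

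The step I expect to be the main obstacle is the first one: namely, honestly producing the conjugator $g$ that brings $\phi_0(A_{[v]})$ back to $A_{[v]}$. The generators of $\Aut^0(A_\Gamma)$ send each vertex to a conjugate of a word lying in a specified parabolic subgroup, but a long composition can a priori mix those parabolic pieces in a way that is not obviously controllable element-by-element. The standard way around this is a normal-form / length argument inside $A_\Gamma$ (for instance using the Servatius centralizer theorem and the fact that a cyclically reduced word and its parabolic support are quasi-invariants of the conjugacy class), which is exactly the technical core of Charney--Vogtmann's proof in \cite{charney2009finiteness}; once that invariance of the parabolic support of a conjugacy class is in hand, the choice of $g$ and the whole construction of $\phi_v$ becomes essentially formal.
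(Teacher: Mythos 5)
The paper does not actually prove this statement; it is quoted from Charney--Vogtmann \cite{charney2009finiteness}, so I am comparing your argument with the standard proof of their Proposition 3.2 rather than with anything in this paper. Your framework is reasonable: inner automorphisms do lie in $\Aut^0(A_\Gamma)$ for the reason you give, and your second step is essentially complete --- indeed $A_{\st[v]}=A_{[v]}\cdot C_{A_\Gamma}(A_{[v]})$ holds because $C_{A_\Gamma}(A_{[v]})=A_{\lk([v])}\times Z(A_{[v]})$ with $Z(A_{[v]})\subset A_{[v]}$ whether $[v]$ spans a clique or a discrete graph, so no further adjustment is needed there.

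The genuine gap is exactly where you flag it, in Step 1, and the repair you point to does not close it. Showing that each vertex $u\in[v]$ has $\phi_0(u)$ conjugate into $A_{[v]}$ produces conjugators that may differ from vertex to vertex, and neither normal forms nor the conjugacy-invariance of parabolic supports upgrades this to a \emph{single} $g$ with $\phi_0(A_{[v]})=gA_{[v]}g^{-1}$; your ``small iterative argument'' also fails as stated, since later inner adjustments can undo earlier ones. The actual proof sidesteps this by verifying, for each Laurence generator of $\Aut^0(A_\Gamma)$ separately, that the \emph{subgroup} $A_{[v]}$ is carried to a conjugate of itself by one element: inversions act trivially on the issue; a transvection $R_{uw}$ with $u\in[v]$ has $w\in[v]$ by maximality, so it preserves $A_{[v]}$ outright; a partial conjugation $P_u^C$ with $u\in[v]$ preserves $A_{[v]}$ because the conjugator $u$ already lies in $A_{[v]}$; and for $u\notin[v]$ one shows that all vertices of $[v]$ outside $\st(u)$ lie in a single component of $\Gamma-\st(u)$ (any $x\in[v]$ with $\lk(x)\subset\st(u)$ would satisfy $x\leq u$, forcing $u\in[v]$ by maximality; otherwise $x$ has a neighbour outside $\st(u)$, which lies in $\st(y)$ for every other $y\in[v]$, giving a connecting path), so $P_u^C(A_{[v]})$ is either $A_{[v]}$ or $uA_{[v]}u^{-1}$. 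Composing generators then yields a single conjugator $g$, and one replaces $\phi_0$ by $\iota_{g^{-1}}\circ\phi_0$. Note that this step is where the hypothesis that $\Gamma$ is connected enters --- a hypothesis your proposal never uses, which is a sign that the decisive argument is missing.
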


For each equivalence class $[v]$ of maximal vertex $v$, there is a group homomorphism \[ P_{[v]}: \Out^0(A_\Gamma) \to \Out^0(A_{\lk [v]})\]
defined by the composition of two maps $R_{[v]}:\Out^0 (A_{\Gamma})\to \Out^0(A_{\st [v]})$ and $E_{[v]}:\Out^0 (A_{\st [v]})\to \Out^0 (A_{\lk [v]})$. The former map, called a \textit{restriction map}, sends $\phi$ to the equivalence class of the restriction of $\phi_v$ to $A_{\st [v]}$ obtained by the previous proposition, and the latter map, called an \textit{exclusion map}, sends each generator in $[v]$ to the identity. If $\Gamma$ is a complete graph, we define $P_{[v]}$ to be the trivial map, as $\lk [v]$ is empty.
We can obtain the following group homomorphism obtained by amalgamating all of the projection maps, called the \textit{projection homomorphism}:

\[ P:= \prod P_{[v]} : \Out^0(A_\Gamma) \to \prod \Out^0(A_{\lk [v]}).\]

Charney and Vogtmann showed in \cite{charney2009finiteness} and \cite{charney2011subgroups} that the kernel $\ker P$ of the projection homomorphism has an intriguing subgroup structure, and that its generators can be described in precise terms. To describe them, we recall some definitions. A vertex $v$ is called \textit{leaf-like} if there is a unique maximal vertex $w$ in $\lk(v)$ and this vertex satisfies $[v]\leq [w]$. The transvection $R_{vw}$ is called a \textit{leaf transvection}.

For a connected graph $\Gamma$ and a vertex $v$ in $\Gamma$, we define $x$ and $y$ to be in the same \textit{$\hat{v}$-component} of $\Gamma$ if they can be connected by an edge-path which contains no edges of $\st(v)$. A $\hat{v}$-component lying entirely inside $\st (v)$ is called a \textit{trivial} $\hat{v}$-component. Any other $\hat{v}$-component is called a \textit{nontrivial} $\hat{v}$-component. An automorphism of $\Aut(A_{\Gamma})$ which conjugates all vertices in a single nontrivial $\hat{v}$-component of $\Gamma$ by $v$ is called a \textit{$\hat{v}$-component conjugation}.

\begin{thm}\cite[Theorem 4.2]{charney2009finiteness}\label{kerofprojhom}
For a connected graph $\Gamma$, the kernel $\ker P$ of the projection homomorphism is a free-abelian group generated by the set of all leaf transvections and nontrivial $\hat{v}$-component conjugations for all $v\in \Gamma$.
\end{thm}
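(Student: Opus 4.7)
The plan is to prove this in three stages: (i) verify that the listed generators all lie in $\ker P$, (ii) show they generate $\ker P$, and (iii) establish the free abelian structure.

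For stage (i), fix a maximal equivalence class $[u]$ and consider a leaf transvection $R_{vw}$. If $v \notin \st[u]$, the restriction to $A_{\st[u]}$ is the identity, so $P_{[u]}(R_{vw}) = 1$. Otherwise $v \in \st[u]$, and the leaf-like hypothesis forces $[u] = [w]$: if $v \in [u]$, then $[v]$ is maximal and the relation $[v] \le [w]$ gives $[v] = [w]$; if $v \notin [u]$, then $v$ is adjacent to some $u' \in [u]$, so $u'$ is a maximal vertex of $\lk(v)$, and the uniqueness of $w$ forces $u' = w$. The exclusion map then sends $w \mapsto 1$, and hence $v \mapsto vw \mapsto v$, giving $P_{[u]}(R_{vw}) = 1$. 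For a nontrivial $\hat{v}$-component conjugation $P_v^C$, its restriction to $A_{\st[u]}$ conjugates the generators in $C \cap \st[u]$ by $v$, which is an inner automorphism of $A_{\st[u]}$ and hence trivial in $\Out^0(A_{\lk[u]})$.

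Stage (ii) is the technical heart of the argument. Given $\phi \in \ker P$, lift it to some $\psi \in \Aut^0(A_\Gamma)$ and expand $\psi$ via the Laurence--Servatius generating set as a product of transvections, partial conjugations, and inversions. Trivial $\hat{v}$-component conjugations are the identity (they conjugate generators of $\st(v)$ by $v$, which commutes with them), and inversions have finite order, so they can be discarded. The main issue is to eliminate non-leaf transvections and partial conjugations whose supports are not full nontrivial $\hat{v}$-components. The strategy is to show that any such factor leaves a nontrivial trace under some $P_{[u]}$ that cannot be cancelled by the listed generators; for instance, a non-leaf transvection $R_{xy}$ produces a nontrivial transvection in some $\Out^0(A_{\lk[u]})$, forcing such factors to cancel among themselves under the hypothesis $\phi \in \ker P$. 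This is organized as an induction on the number of maximal equivalence classes of $\Gamma$, using the inductively known structure of $\Out^0(A_{\lk[u]})$ at each step.

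For stage (iii), pairwise commutativity up to inner automorphisms is verified by direct computation on vertex generators, exploiting that leaf-transvection targets are maximal vertices and that nontrivial $\hat{v}$-component conjugations act only on components outside $\st(v)$. Linear independence is then obtained by detecting each generator: leaf transvections $R_{vw}$ correspond to distinct elementary matrices $I + E_{vw}$ in $\Aut(H_1(A_\Gamma)) \cong \GL_{|V|}(\Z)$, while $\hat{v}$-component conjugations act trivially on $H_1$ but are distinguished by their action on the individual $\hat{v}$-components (which are disjoint subsets of $V$). The principal obstacle is stage (ii): organizing a normal-form or peak-reduction procedure for arbitrary Laurence--Servatius words representing elements of $\ker P$ so that unwanted generators are systematically eliminated, while preserving the kernel condition at each step and ensuring termination.
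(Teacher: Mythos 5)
This statement is quoted from Charney--Vogtmann \cite[Theorem 4.2]{charney2009finiteness}; the paper gives no proof of it, so your attempt can only be measured against the original argument, which is a genuinely involved inductive analysis of the restriction and exclusion maps. Against that standard, your proposal has a real gap at its acknowledged ``technical heart'': stage (ii) is a plan, not a proof. The assertion that every non-leaf transvection or non-$\hat{v}$-component partial conjugation ``leaves a nontrivial trace under some $P_{[u]}$ that cannot be cancelled'' is exactly the content of the theorem, and it does not follow from inspecting factors of a Laurence--Servatius word one at a time: the generators satisfy many relations (e.g.\ relation (5) of Theorem \ref{poutpresentation} writes one partial conjugation as a product of the others), so an individual ``bad'' factor can perfectly well be cancelled by a combination of other bad factors without the whole word leaving $\ker P$. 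Charney--Vogtmann get around this by first proving exactness properties of $R_{[v]}$ and $E_{[v]}$ and then inducting on the structure of $\Out^0(A_{\lk[v]})$; you gesture at such an induction but supply none of it.

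There is also a concrete error in stage (iii). The full list of generators in the statement is \emph{not} linearly independent: for a fixed $v$, the product of all nontrivial $\hat{v}$-component conjugations conjugates every vertex outside $\st(v)$ by $v$ and fixes the vertices of $\st(v)$ (which commute with $v$ anyway), so it coincides with the inner automorphism by $v$ and is trivial in $\Out(A_\Gamma)$. In particular, if $v$ has exactly one nontrivial $\hat{v}$-component, that single generator is already trivial. The theorem says ``generated by,'' not ``with basis''; proving freeness requires selecting a genuine basis (e.g.\ omitting one component conjugation per vertex) or arguing abstractly, and your proposed ``detection'' argument would purport to establish independence of a dependent set. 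Finally, in stage (i) your claim that the restriction of $P_v^C$ to $A_{\st[u]}$ is inner is too quick: conjugating only the generators in $C\cap\st[u]$ by $v$ is a partial conjugation of $A_{\st[u]}$, which is not inner in general; one needs the specific relationship between $\st[u]$ and the $\hat{v}$-components to conclude.
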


\subsection{Relations in $\Out(A_{\Gamma})$}

The following lemma provides a classification of connected components in the complements of stars of two nonadjacent vertices.

\begin{lem}\cite[Lemma 2.1]{day2018subspace}
Let $a$ and $b$ be nonadjacent vertices of $\Gamma$. Then the components in $\Gamma- \st(a)$ consist of $A_0,\cdots,A_k,C_1,\cdots,C_l$ and the components of $\Gamma-\st(b)$ consist of $B_0,\cdots,B_m,C_1,\cdots,C_l$ where $b\in A_0$ and $a\in B_0$, and $A_1,\cdots,A_k \subset B_0$ and $B_1,\cdots,B_m \subset A_0$.
\end{lem}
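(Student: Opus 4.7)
The plan is to reduce the statement to the following single observation: every component $X$ of $\Gamma - \st(a)$ other than $A_0$ is entirely disjoint from $\st(b)$, and so is contained in a unique component of $\Gamma - \st(b)$. Granted this, the labelling promised by the lemma writes itself: the $X$'s contained in $B_0$ become the $A_i$'s (for $i \geq 1$), and the remaining $X$'s turn out to be full components of $\Gamma - \st(b)$ distinct from $B_0$; these become the $C_i$'s. The symmetric analysis from the $b$-side produces exactly the same common components (justifying the shared notation) together with the $B_j$'s.

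To establish the disjointness claim I would argue by contradiction: a vertex $x \in X \cap \st(b)$ is not equal to $b$ (because $b \in A_0$ and $X \neq A_0$), hence $x \in \lk(b)$; but nonadjacency of $a$ and $b$ gives $b \in \Gamma - \st(a)$, so the edge $xb$ lies in $\Gamma - \st(a)$, forcing $X$ and $A_0$ to coincide as components of $\Gamma - \st(a)$. This is precisely where the nonadjacency hypothesis is used.

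The second, slightly more delicate step is to show that if such an $X$ is contained in a component $Y$ of $\Gamma - \st(b)$ with $Y \neq B_0$, then in fact $X = Y$. A putative path in $Y$ from $X$ to $Y \setminus X$ must first leave $X$ at some vertex $z_i$, which, since $X$ is a component of $\Gamma - \st(a)$, must lie in $\st(a) \cap (\Gamma - \st(b))$. Whether $z_i = a$ or $z_i \in \lk(a)$, one concludes that $a$ and $z_i$ lie in the same component of $\Gamma - \st(b)$ (this uses $a \in \Gamma - \st(b)$, which is again where nonadjacency of $a$ and $b$ enters), and this forces $Y = B_0$, a contradiction. I expect the main obstacle to sit in exactly this step: not its depth, but the care needed to track which vertices of the candidate path live in $\Gamma - \st(a)$, which live in $\Gamma - \st(b)$, and which live in both. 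Once the two claims are in place, the required partition of components on each side falls out immediately, and the two lists of $C_i$'s match up because they are literally the same vertex sets.
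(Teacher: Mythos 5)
Your argument is correct: the disjointness claim (any component $X\neq A_0$ of $\Gamma-\st(a)$ misses $\st(b)$, using $b\in\Gamma-\st(a)$ from nonadjacency) and the exit-vertex argument (the first vertex where a path in $Y$ leaves $X$ must lie in $\st(a)\cap(\Gamma-\st(b))$, forcing $Y=B_0$) together give exactly the stated trichotomy, and the shared components match by symmetry since they are the same vertex sets. The paper itself states this as Lemma 2.1 of Day--Wade without proof, so there is nothing to compare against; your proof is the standard one and is sound.
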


In the above lemma, $A_0$ and $B_0$ are called the \emph{dominating components}, $C_i$'s are called the \emph{shared components}, and the other components are called the \emph{subordinate components}. Note that any of $k$, $m$, or $l$ can be zero.

\begin{definition}\label{def:SIL}
 Let $\Gamma$ be a graph. We say that a pair of vertices of $\Gamma$, $(a,b)$ form a separating intersection of links (SIL, for short) or SIL-pair if they are not adjacent, and there exists a connected component $C$ of $\Gamma - \left ( \lk(a) \cap \lk(b) \right)$ such that $C$ does not contain $a$ and $b$.
\end{definition}

It is clear that the component $C$ in the definition of SIL-pair is a shared component. Conversely, a shared component must satisfy the definition of SIL-pair. This observation leads to the following lemma.

\begin{lem}\cite[Lemma 2.3]{day2018subspace}\label{lem:SharedComponents}
A pair $(a,b)$ is a SIL-pair if and only if the set
of shared components associated to $(a,b)$ is nonempty.
\end{lem}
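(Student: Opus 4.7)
The plan is to prove both directions at once by establishing the stronger equivalence that a vertex subset $C \subset \Gamma$ is a shared component of the pair $(a,b)$ if and only if $C$ is a connected component of $\Gamma - (\lk(a) \cap \lk(b))$ containing neither $a$ nor $b$. The two directions of the lemma then follow immediately. The recurring technical observation underlying both directions is: since $a,b$ are non-adjacent (so $a \notin \lk(b)$ and $b \notin \lk(a)$), any vertex $v \notin \lk(a) \cap \lk(b)$ that lies in $\lk(a)$ must lie outside $\lk(b)$ and hence outside $\st(b)$; symmetrically for $\lk(b)$.

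For the forward direction of the equivalence, let $C$ be a shared component. Since $C$ avoids both $\st(a)$ and $\st(b)$, trivially $C \subset \Gamma - (\lk(a) \cap \lk(b))$ and $a,b \notin C$. The substantive point is that $C$ is a \emph{full} component of $\Gamma - (\lk(a) \cap \lk(b))$, not merely a connected subset. I would suppose some $v \notin C$ is adjacent to a vertex $c \in C$ within $\Gamma - (\lk(a) \cap \lk(b))$ and derive a contradiction by cases: if $v=a$ or $v=b$, then $c$ would have to lie in $\st(a)$ or $\st(b)$, contradicting $c \in C$; if $v \in \lk(a)$, then by the technical observation $v \notin \st(b)$, so $v$ shares a component of $\Gamma - \st(b)$ with $c$, which is $C$ itself — contradicting $v \notin C$; the case $v \in \lk(b)$ is symmetric; and if $v$ avoids $\st(a) \cup \st(b)$ altogether, then $v$ and $c$ share a component of $\Gamma - \st(a)$, again $C$.

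For the reverse direction, let $D$ be a component of $\Gamma - (\lk(a) \cap \lk(b))$ not containing $a$ or $b$. I first show $D \cap \lk(a) = \emptyset$: any $d \in D \cap \lk(a)$ is adjacent in $\Gamma$ to $a$, and since $a \in \Gamma - (\lk(a) \cap \lk(b))$ by non-adjacency of $a,b$, this would force $a \in D$, a contradiction. Symmetrically $D \cap \lk(b) = \emptyset$, so $D \subset \Gamma - \st(a)$, and being connected it lies inside some component of $\Gamma - \st(a)$. Next, any vertex $v \in \Gamma - \st(a)$ adjacent to $D$ satisfies $v \notin \lk(a) \supseteq \lk(a) \cap \lk(b)$, hence $v$ is an admissible neighbor in $\Gamma - (\lk(a) \cap \lk(b))$ and must already lie in $D$; thus $D$ is exactly a component of $\Gamma - \st(a)$, and by the same argument a component of $\Gamma - \st(b)$, so it is shared.

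The proof is almost pure bookkeeping and I do not anticipate a real obstacle; the single maneuver that does all the work is the little observation in the first paragraph about which star a vertex outside $\lk(a) \cap \lk(b)$ can lie in. The only thing requiring care is keeping track of the standing non-adjacency hypothesis on $(a,b)$, which is what guarantees $a \in \Gamma - (\lk(a) \cap \lk(b))$ and $b \in \Gamma - (\lk(a) \cap \lk(b))$ — a hypothesis used in both directions.
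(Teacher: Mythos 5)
Your argument is correct and is exactly the observation the paper itself records just before the lemma (``the component $C$ in the definition of SIL-pair is a shared component; conversely, a shared component must satisfy the definition''), which the paper leaves as clear and otherwise attributes to Day--Wade. You have simply supplied the routine verification that a shared component is a full component of $\Gamma - (\lk(a)\cap\lk(b))$ and vice versa, so there is nothing to add.
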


We also say that $C$ is a shared component of $(a,b)$.
We say that a graph $\Gamma$ has no SIL if any pair of two vertices of $\Gamma$ is not a SIL-pair.
 
The following lemma is the key to determining whether two partial conjugations commute or not.

\begin{lem}\cite[Lemma 2.4]{day2018subspace}\label{lem:commutingrelation}
Let $a$ and $b$ be nonadjacent vertices in $\Gamma$ such that there are non-trivial partial conjugations $P_a^C$ and $P_b^D$ in $\Out(A_\Gamma)$. Then $[P_a^C, P_b^D]\neq 1$ in $\Out(A_\Gamma)$ if and only if $(a,b)$ is a SIL-pair and one of the following conditions hold:
\begin{itemize}
\item $C$ and $D$ are the dominating components for the pair $(a, b)$.
\item One of $C$ or $D$ is dominating and the other is shared.
\item $C$ and $D$ are identical shared components.
\end{itemize}
\end{lem}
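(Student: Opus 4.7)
The proof is a direct computation of $[P_a^C, P_b^D]$ as an automorphism of $A_\Gamma$ on each generator, followed by a determination of when this commutator is inner (equivalently, trivial in $\Out(A_\Gamma)$). The case analysis is organized by the types of $C$ and $D$ relative to $(a,b)$ from the previous lemma: each of $C,D$ is dominating, subordinate, or shared.

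For the ``only if'' direction, I would verify that the commutator is trivial in $\Out(A_\Gamma)$ in every configuration outside the three listed. When neither $b \in C$ nor $a \in D$ holds --- covering (subordinate, subordinate), any mixed subordinate/shared pair, and distinct shared components --- the two supports are disjoint and each base is fixed by the other partial conjugation, so $P_a^C P_b^D$ and $P_b^D P_a^C$ agree on every generator already in $\Aut$. The (dominating, subordinate) configuration and its mirror reduce to one explicit computation: for $C=A_0\ni b$, $D=B_i\subset A_0$ and $v\in D$, one has $P_a^C(bvb^{-1})=abvb^{-1}a^{-1}=P_b^D(ava^{-1})$, using $a\notin D$ and $b,v\in C$. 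Finally, for (dominating, dominating) without a SIL-pair, I would use the relation $\prod_i P_a^{A_i}=\Inn(a)$ in $\Aut$; in $\Out$ this gives $P_a^{A_0}=\prod_{i\geq 1}(P_a^{A_i})^{-1}$, and each factor is subordinate, hence commutes with $P_b^{B_0}$ in $\Aut$ by the previous step.

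For the three listed cases, I expect direct computation on each generator to show that $[P_a^C, P_b^D]$ acts as conjugation by some nontrivial commutator-word $\omega\in\langle a,b\rangle$ (such as $[b,a^{-1}]$ or $[a,b]$) on a specific subset $S\subset V$ and trivially on its complement: $S=C$ in the (shared, dominating) case, $S=D$ in the mirror case, and $S=C=D$ in the identical shared case. The (dominating, dominating) SIL case reduces in $\Out$ via the same relation to a product of commutators $[(P_a^{C_j})^{-1}, P_b^{B_0}]$ over the shared components $C_j$, each of the preceding form, whose supports are pairwise disjoint and therefore cannot cancel.

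The main obstacle is to show such a ``localized $\omega$-conjugation on $S$'' is not inner. If it equalled $\Inn(g)$ for some $g\in A_\Gamma$, comparison on $v\notin S$ would force $g\in\bigcap_{v\notin S}A_{\st(v)}$, and comparison on $v\in S$ would force $\omega^{-1}g\in\bigcap_{v\in S}A_{\st(v)}$, using Servatius' centralizer theorem ($Z_v = A_{\st(v)}$) together with the standard RAAG identity $A_{\Lambda_1}\cap A_{\Lambda_2}=A_{\Lambda_1\cap\Lambda_2}$. Every shared component $S$ (or $C_j$, in the reduced case) consists of vertices $v$ with $a,b\notin\st(v)$, so $\bigcap_{v\in S}\st(v)\subset\Gamma\setminus\{a,b\}$; and since $a,b\notin S$ while $a\notin\st(b)$ and $b\notin\st(a)$ by nonadjacency, $\bigcap_{v\notin S}\st(v)\subset\lk(a)\cap\lk(b)\subset\Gamma\setminus\{a,b\}$ as well. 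Both subgroups therefore sit inside $A_{\Gamma\setminus\{a,b\}}$, and so $\omega=g\cdot(\omega^{-1}g)^{-1}\in A_{\Gamma\setminus\{a,b\}}$. The retraction $A_\Gamma\twoheadrightarrow A_{\Gamma\setminus\{a,b\}}$ that kills $a$ and $b$ fixes $\omega$ on one hand (as $\omega$ now lies in its target subgroup) and sends $\omega\in\langle a,b\rangle$ to $1$ on the other, forcing $\omega=1$ in $A_\Gamma$. This contradicts the nontriviality of $\omega$ in the free rank-$2$ subgroup $\langle a,b\rangle$ (guaranteed by $a,b$ being nonadjacent), so the commutator is not inner and is nontrivial in $\Out(A_\Gamma)$.
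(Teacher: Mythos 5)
The paper does not actually prove this lemma---it is quoted from Day--Wade \cite{day2018subspace} and used as a black box---so there is no internal argument to compare yours against. Taken on its own terms, your proof is correct and complete in outline. The ``only if'' direction is a clean case check: the disjoint-support cases, the (dominating, subordinate) computation, and the reduction of the no-SIL (dominating, dominating) case via $\prod_C P_a^C=\Inn(a)$ all work, the last because each subordinate factor $P_a^{A_i}$ has support inside $B_0$ and hence commutes with $P_b^{B_0}$ already in $\Aut(A_\Gamma)$. For the ``if'' direction, the real content is exactly where you locate it: showing that an automorphism acting as conjugation by a nontrivial $\omega\in\langle a,b\rangle\cong\mathbb{F}_2$ on a nonempty union of shared components and trivially on every other generator is not inner. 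Your centralizer argument does this correctly: $g\in A_{\st(a)}\cap A_{\st(b)}=A_{\lk(a)\cap\lk(b)}$, $\omega^{-1}g\in A_{\st(v)}$ for $v$ in a shared component, both special subgroups avoid $a$ and $b$, and the retraction killing $a,b$ forces $\omega=1$, a contradiction. Two phrasings worth tightening when writing this up: in the (dominating, dominating) SIL case, say explicitly that the subordinate factors commute with $P_b^{B_0}$ in $\Aut$ and can therefore be pulled out of the commutator, so the class of $[P_a^{A_0},P_b^{B_0}]$ in $\Out$ is that of $\bigl[\prod_j(P_a^{C_j})^{-1},P_b^{B_0}\bigr]$; and replace ``supports cannot cancel'' by simply running the non-innerness argument with $S$ equal to a single shared component, which already suffices.
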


\subsection{Pure symmetric automorphism groups}
The subgroup $\mathrm{PSA}(A_\Gamma)\leq\Aut(A_\Gamma)$ generated by partial conjugations is called the \emph{pure symmetric automorphism group}, and its image via the quotient map $\Aut(A_\Gamma)\to \Out(A_\Gamma)$ is called the \emph{pure symmetric outer automorphism group}, which is denoted by $\mathrm{PSO}(A_\Gamma)$. Toinet originally gave a finite presentation of $\mathrm{PSA}(A_{\Gamma})$ in \cite{toinet2012finitely}. By taking the quotient by the inner automorphism group, we obtain the following presentation of $\mathrm{PSO}(A_{\Gamma})$.

\begin{thm}[\cite{toinet2012finitely}, \cite{day2018subspace}]\label{poutpresentation}
For each graph $\Gamma$, $\mathrm{PSO}(A_{\Gamma})$ has a finite presentation consisting of the standard generating set and relations of the form:
\begin{enumerate}
    \item $[P_{v}^C,P_{w}^D]=1$ when $[v,w]=1$
    \item $[P_{v}^C,P_{w}^D]=1$ when $C\cap D=\emptyset$, $w\notin C$, and $v\notin D$
    \item $[P_{v}^C,P_{w}^D]=1$ when $\{v\}\cup C\subset D$ or $\{w\}\cup D\subset C$
    \item $[P_{v}^C P_{v}^{D},P_{w}^D]=1$ when $w\in C$ and $v\notin D$
    \item $\prod_{C\in I_v}P_{v}^C=1$ where the product is taken over the set $I_v$ of connected components of $\Gamma-\st(v)$.
\end{enumerate}
\end{thm}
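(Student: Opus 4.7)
The plan is to derive this presentation from Toinet's finite presentation of $\mathrm{PSA}(A_\Gamma)$ by taking the quotient by inner automorphisms. Toinet's theorem supplies a finite presentation of $\mathrm{PSA}(A_\Gamma)$ whose generators are exactly the partial conjugations $P_v^C$; the defining relations there are precisely relations (1)--(4) of the statement, which hold inside $\mathrm{PSA}(A_\Gamma)$ with no reference to $\Inn(A_\Gamma)$. The task therefore reduces to identifying the additional relations appearing in the passage to $\mathrm{PSO}(A_\Gamma) = \mathrm{PSA}(A_\Gamma)/\bigl(\mathrm{PSA}(A_\Gamma) \cap \Inn(A_\Gamma)\bigr)$.

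The key identity to establish is $\iota_v = \prod_{C \in I_v} P_v^C$ in $\mathrm{PSA}(A_\Gamma)$, where $\iota_v$ denotes conjugation by the vertex $v$. For $w \in \st(v)$, commutativity of $v$ and $w$ gives $\iota_v(w) = w$, which agrees with the action of every $P_v^C$ on $\st(v)$; for $w$ in a component $C \in I_v$, exactly one factor in the product conjugates $w$ by $v$, matching $\iota_v(w) = vwv^{-1}$. This identity shows that $\iota_v \in \mathrm{PSA}(A_\Gamma)$ for every vertex $v$, so $\Inn(A_\Gamma) \subset \mathrm{PSA}(A_\Gamma)$ and the kernel of the quotient map $\mathrm{PSA}(A_\Gamma) \twoheadrightarrow \mathrm{PSO}(A_\Gamma)$ equals all of $\Inn(A_\Gamma)$. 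Relation (5) is then precisely the statement $\iota_v = 1$ rewritten in terms of partial conjugations.

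To finish, one invokes the standard fact that if $G = \langle S \mid R \rangle$ and $N \triangleleft G$ is normally generated by words $n_1, \dots, n_k$, then $G/N$ has presentation $\langle S \mid R \cup \{n_1, \dots, n_k\} \rangle$. Here $N = \Inn(A_\Gamma)$ is generated already as a subgroup by $\{\iota_v : v \in V\}$, hence a fortiori normally generated by these elements inside $\mathrm{PSA}(A_\Gamma)$. Adjoining the relators from (5) to Toinet's presentation therefore yields the desired finite presentation of $\mathrm{PSO}(A_\Gamma)$.

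I do not anticipate a substantial obstacle beyond the verification above: once Toinet's presentation is in hand, the remaining content is the elementary identification of each $\iota_v$ as a product of partial conjugations. The only case worth checking separately is when $v$ is central in $A_\Gamma$, so that $\st(v) = \Gamma$ and $I_v = \emptyset$; in that case $\iota_v$ is already trivial in $\mathrm{PSA}(A_\Gamma)$ and the relation in (5) is a vacuous empty product, consistent with the formula. Similarly, if $\Gamma - \st(v)$ has a single component then (5) says $P_v^{\Gamma - \st(v)} = 1$, which corresponds to the fact that conjugation of $A_\Gamma$ by $v$ is an inner automorphism and becomes trivial in $\Out(A_\Gamma)$.
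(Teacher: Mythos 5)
Your proposal is correct and follows essentially the same route the paper takes: the paper likewise obtains this presentation by starting from Toinet's finite presentation of $\mathrm{PSA}(A_\Gamma)$ (whose relations are (1)--(4)) and quotienting by $\Inn(A_\Gamma)$, which lies inside $\mathrm{PSA}(A_\Gamma)$ because each $\iota_v$ equals $\prod_{C\in I_v}P_v^C$, yielding the extra relators (5). Your explicit verification of that identity and of the normal-generation step fills in exactly the details the paper leaves to the citations.
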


Note that $\mathrm{PSO}(A_{\Gamma})=\Out^*(A_{\Gamma})$ when there is no transvection in $\Gamma$. From this observation, it is easy to deduce the following lemma.

\begin{lem}
    Let $\Gamma$ be a graph. Then $\mathrm{PSO}(A_{\Gamma})$ is a finite index subgroup of $\Out^*(A_{\Gamma})$ if and only if there is no transvection in $\Gamma$.
\end{lem}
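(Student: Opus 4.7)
The plan is to handle the two directions separately. The backward direction is essentially tautological: if $\Gamma$ contains no transvection, then by the definition of $\Out^*(A_\Gamma)$ its generating set consists solely of partial conjugations, so $\Out^*(A_\Gamma) = \mathrm{PSO}(A_\Gamma)$ and the index is trivially $1$.

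For the forward direction I would argue by contrapositive, showing that the existence of a nontrivial transvection forces $[\Out^*(A_\Gamma) : \mathrm{PSO}(A_\Gamma)]$ to be infinite. The main tool would be the abelianization of $A_\Gamma$: since $A_\Gamma^{ab} \cong \Z^V$ and inner automorphisms act trivially on the abelianization, the abelianization map induces a homomorphism
\[
\Phi : \Out(A_\Gamma) \to \GL_V(\Z).
\]
I would then compute $\Phi$ on the two classes of generators of $\Out^*(A_\Gamma)$. Every partial conjugation $P_v^C$, being defined by conjugation of a set of generators by a single generator, descends to the identity on $A_\Gamma^{ab}$, so $\mathrm{PSO}(A_\Gamma) \subseteq \ker \Phi$. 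On the other hand, if $R_{vw}$ is a transvection (with $v \neq w$ and $v \leq w$), its image $\Phi([R_{vw}])$ is the elementary matrix that adds the $w$-coordinate to the $v$-coordinate, which has infinite order in $\GL_V(\Z)$.

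With those two computations in hand the contradiction is immediate: if $\mathrm{PSO}(A_\Gamma)$ had finite index $n$ in $\Out^*(A_\Gamma)$, then $[R_{vw}]^n$ would lie in $\mathrm{PSO}(A_\Gamma) \subseteq \ker \Phi$, yet $\Phi([R_{vw}])^n$ is a nonzero power of a nontrivial elementary matrix and hence still nontrivial. This contradicts the inclusion and forces $[\Out^*(A_\Gamma) : \mathrm{PSO}(A_\Gamma)] = \infty$. There is no real obstacle in this argument; the only mild points to verify are the standard facts that inner automorphisms act trivially on $A_\Gamma^{ab}$ and that partial conjugations become the identity after abelianization, both of which are immediate from the definitions. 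The whole proof should therefore be quite short.
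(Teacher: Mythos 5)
Your proof is correct, and it actually supplies more detail than the paper does: the paper only records the observation that $\mathrm{PSO}(A_{\Gamma})=\Out^*(A_{\Gamma})$ when $\Gamma$ has no transvection (your backward direction) and then declares the lemma ``easy to deduce,'' leaving the forward direction implicit. Your argument for the forward direction, via the abelianization representation $\Phi:\Out(A_\Gamma)\to\GL_V(\Z)$, is exactly in the spirit of the standard representation $\rho$ that the paper introduces in Section 3.1, and the two key computations (partial conjugations lie in $\ker\Phi$, a transvection maps to an infinite-order elementary matrix) are both right; in effect you show $\mathrm{PSO}(A_\Gamma)\subseteq\ker\Phi$ while $\Phi(\Out^*(A_\Gamma))$ is infinite, so the index is infinite. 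One small imprecision: from $[\Out^*(A_\Gamma):\mathrm{PSO}(A_\Gamma)]=n$ you cannot immediately conclude $[R_{vw}]^n\in\mathrm{PSO}(A_\Gamma)$ unless you know $\mathrm{PSO}(A_\Gamma)$ is normal (or replace $n$ by $n!$, using the action on the $n$ cosets); alternatively, just observe that the cosets $[R_{vw}]^k\,\mathrm{PSO}(A_\Gamma)$, $k\in\Z$, are pairwise distinct because $[R_{vw}]^{k-l}\in\mathrm{PSO}(A_\Gamma)\subseteq\ker\Phi$ forces $k=l$. With that cosmetic fix the proof is complete.
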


\subsection{Acylindrically hyperbolic groups}

We refer to the definition of acylindrically hyperbolic groups based on Osin's paper \cite{osin2016acylindrically}.

\begin{definition}
Let $G$ be a group acting on a hyperbolic space $S$ isometrically. An action of $G$ on $S$ is \textit{acylindrical} if for any $\varepsilon>0$ there exist $R,N>0$ such that for each $x,y \in S$ with $\mathrm{d}(x,y)\geq R$, we have
\[ \#\{g\in G : \mathrm{d}(x,gx)\leq \varepsilon, \mathrm{d}(y,gy)\leq \varepsilon\} \leq N.\]
An action of $G$ on $S$ is \textit{non-elementary} if $G$ admits two independent hyperbolic isometries of $S$.
\end{definition}

\begin{definition}
A group $G$ is called acylindrically hyperbolic if $G$ admits a non-elementary acylindrical action on some hyperbolic space $S$.
\end{definition}

It is a well-known fact that non-virtually-cyclic hyperbolic groups and relatively hyperbolic groups with proper peripheral subgroups are acylindrically hyperbolic. The same is true of $\Out(\mathbb{F}_n)$ with $n\geq 2$, $\MCG(S_{g,p})$ unless $g=0$ and $p\leq 3$, and directly indecomposable non-cyclic RAAGs. However, $\SL_n(\Z)$, $\GL_n(\Z)$ with $n\geq 3$ and directly decomposable RAAGs are not acylindrically hyperbolic.

\begin{definition}
 Let $G$ be a group. A subgroup $H < G$ is called \textit{s-normal} if for every $g \in G$, the cardinality $ |H \cap g^{-1}Hg | $ is infinite.
\end{definition}

 The following are well-known facts for acylindrically hyperbolic groups.
 
\begin{prop}\cite[Corollary 1.5]{osin2016acylindrically}\label{normalityAH}
    Let $G$ be an acylindrically hyperbolic group and $H$ be an s-normal subgroup of $G$. Then $H$ is also acylindrically hyperbolic. In particular, if $H$ is an infinite normal subgroup of $G$, then $H$ is acylindrically hyperbolic.
\end{prop}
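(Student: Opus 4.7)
The plan is to restrict the given non-elementary acylindrical action of $G$ on a hyperbolic space $S$ to $H$ and use s-normality to rule out degenerate cases. The restricted action is automatically acylindrical, so the main task is to produce two independent loxodromic isometries for $H$. I would invoke Osin's trichotomy for a group acting acylindrically on a hyperbolic space: the action is either elliptic (bounded orbits), lineal (virtually cyclic with a loxodromic), or non-elementary (two independent loxodromics), with no parabolic or focal types possible under acylindricity. The goal is to rule out the first two possibilities for $H$.

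To rule out the elliptic case, fix $x \in S$ with $H \cdot x$ of diameter at most $D$. Choose a loxodromic $g \in G$, which exists because $G$ itself acts non-elementarily. For each $n$, any $h \in H \cap g^{-n} H g^{n}$ moves $x$ by at most $D$ (since $h \in H$) and moves $g^{-n} x$ by at most $D$ (since $g^{n} h g^{-n} \in H$ moves $x$ by at most $D$). Because $g$ is loxodromic, $\mathrm{d}(x, g^{-n} x) \to \infty$, so the acylindricity constants $(R,N)$ for $\varepsilon = D$ give $|H \cap g^{-n} H g^{n}| \leq N$ for all sufficiently large $n$, contradicting s-normality.

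To rule out the lineal case, suppose $H$ is virtually cyclic and contains a loxodromic with fixed pair $\{p, q\}$ on the Gromov boundary of $S$. Then $H$ is contained in the elementary closure $E(\{p,q\})$, which is itself virtually cyclic under acylindricity. For any $g \in G$, the conjugate $g^{-1} H g$ is contained in $E(\{g^{-1} p, g^{-1} q\})$. If $\{p,q\} \neq \{g^{-1} p, g^{-1} q\}$, no loxodromic can lie in the intersection (a loxodromic has a unique attracting/repelling pair), so every element of $H \cap g^{-1} H g$ has bounded orbits; the argument of the previous paragraph then forces this intersection to be finite, contradicting s-normality. Hence every $g \in G$ preserves $\{p, q\}$, placing $G$ inside $E(\{p,q\})$ and contradicting the non-elementarity of $G$.

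Therefore $H$ is non-elementary, and combined with the inherited acylindricity this yields that $H$ is acylindrically hyperbolic. The ``in particular'' clause is immediate: for any infinite normal subgroup $H$ and any $g \in G$, we have $g^{-1} H g = H$, so $|H \cap g^{-1} H g| = |H| = \infty$. I expect the main subtlety to be the lineal case, where one must use acylindricity both to guarantee that the elementary closure $E(\{p,q\})$ is virtually cyclic and to leverage the uniqueness of the attracting/repelling pair of a loxodromic in order to force the contradiction with non-elementarity of $G$.
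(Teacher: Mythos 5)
The paper does not actually prove this statement; it is imported verbatim from Osin's paper (Corollary~1.5 there), so your proposal can only be measured against the standard argument — which it follows: restrict the non-elementary acylindrical action to $H$, invoke the elliptic/lineal/non-elementary trichotomy for acylindrical actions, and use s-normality to exclude the first two cases. Your elliptic case is correct and is exactly the usual computation: an element of $H\cap g^{-n}Hg^{n}$ displaces both $x$ and $g^{-n}x$ by at most $D=\mathrm{diam}(H\cdot x)$, so acylindricity with $\varepsilon=D$ caps the size of this intersection once $\mathrm{d}(x,g^{-n}x)\geq R$, contradicting s-normality applied to $g^{n}$.

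There is, however, one step in the lineal case that does not work as written. Having observed that $H\cap g^{-1}Hg$ contains no loxodromic when $\{p,q\}\neq\{g^{-1}p,g^{-1}q\}$, you conclude that it has bounded orbits and then assert that ``the argument of the previous paragraph forces this intersection to be finite.'' But that argument only bounds the intersection of a bounded-orbit subgroup with its conjugates by a loxodromic; it does not bound the subgroup itself (an infinite group can act acylindrically with bounded orbits, e.g.\ trivially on a point), and $H\cap g^{-1}Hg$ is not known to be s-normal, so the contradiction cannot be rerun. The repair is elementary and stays inside your framework: $H\cap g^{-1}Hg$ is a subgroup of the virtually cyclic group $H$, and $H$ has a finite-index cyclic subgroup generated by a loxodromic $h_0$ with endpoints $\{p,q\}$; hence if $H\cap g^{-1}Hg$ is infinite it contains a nontrivial power of $h_0$, a loxodromic whose fixed pair must simultaneously equal $\{p,q\}$ and $\{g^{-1}p,g^{-1}q\}$. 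This gives directly that an infinite intersection forces the two pairs to coincide, which is all you need; s-normality then makes every $g\in G$ stabilize $\{p,q\}$, and the rest of your argument (under acylindricity the setwise stabilizer of the pair is the virtually cyclic $E(\{p,q\})$, so $G$ would be elementary) goes through.
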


\begin{prop}\cite[Corollary 7.3 (a)]{osin2016acylindrically}\label{amenable}
    If $G$ is acylindrically hyperbolic, then $G$ contains no infinite amenable normal subgroup. In particular, $G$ does not contain free abelian groups and infinite nilpotent groups as normal subgroups.
\end{prop}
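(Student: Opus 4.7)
The plan is to argue by contradiction, combining Proposition~\ref{normalityAH} with the classical fact that acylindrically hyperbolic groups contain non-abelian free subgroups and are therefore non-amenable. Suppose $H$ is an infinite amenable normal subgroup of $G$. Since $H$ is infinite, $|H \cap g^{-1}Hg| = |H| = \infty$ for every $g \in G$, so $H$ is s-normal in $G$. Proposition~\ref{normalityAH} then forces $H$ itself to be acylindrically hyperbolic.

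Next I would show that any acylindrically hyperbolic group contains a copy of $\F_2$ and is therefore non-amenable. By definition $H$ admits a non-elementary acylindrical action on a hyperbolic space $S$, producing two independent loxodromic elements $h_1, h_2 \in H$. Two independent loxodromic isometries of a Gromov hyperbolic space have North-South dynamics with pairwise disjoint fixed-point pairs on the Gromov boundary, and acylindricity is precisely what is needed to pass to sufficiently high powers $h_1^N, h_2^N$ so that a Schottky-type ping-pong argument on the boundary yields $\langle h_1^N, h_2^N \rangle \cong \F_2$. Since amenability is inherited by subgroups and $\F_2$ is non-amenable by von Neumann's theorem, this contradicts the amenability of $H$. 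The ``in particular'' clauses then follow at once, as free abelian groups are amenable and infinite nilpotent groups are amenable (being iterated central extensions of abelian groups); the existence of such a subgroup as an infinite normal subgroup would contradict what we have already proved.

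The main obstacle is the ping-pong step: non-elementarity alone supplies only two independent loxodromics, whereas producing a free subgroup requires simultaneous control of their orbits on $S$ together with the stabilizers of far-apart pairs of points, and acylindricity is exactly what supplies this control. A fully rigorous proof would invoke Osin's stronger conclusion that $H$ contains a \emph{hyperbolically embedded} copy of $\F_2$, which strictly strengthens the bare existence statement used here but can be extracted from the same ping-pong construction applied with enough quantitative care.
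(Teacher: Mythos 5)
Your argument is correct, and it is essentially the standard derivation: the paper itself gives no proof of this proposition (it is quoted directly from Osin's Corollary 7.3(a)), and Osin's own proof runs exactly as you describe, namely an infinite normal subgroup $H$ satisfies $H\cap g^{-1}Hg=H$ and is therefore s-normal, hence acylindrically hyperbolic by Proposition~\ref{normalityAH}, contradicting amenability since acylindrically hyperbolic groups contain nonabelian free subgroups. One small remark: for the ping-pong step you do not actually need acylindricity --- two independent loxodromic isometries of a hyperbolic space already generate a copy of $\F_2$ after passing to sufficiently high powers by the classical boundary ping-pong; acylindricity (or WPD) is only needed if you want the stronger conclusion that the free subgroup is hyperbolically embedded, which your argument does not require.
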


\begin{prop}\cite[Corollary 7.3 (b)]{osin2016acylindrically}\label{product}
    Let $G$ be an acylindrically hyperbolic group and assume that $G$ decomposes as $G=G_1 \times G_2$. Then either $G_1$ or $G_2$ should be finite.
\end{prop}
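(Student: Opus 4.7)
The plan is to argue via the structural fact, due to Dahmani--Guirardel--Osin, that an acylindrically hyperbolic group $G$ always contains a generalized loxodromic element $g$ (i.e.\ an element acting loxodromically on some hyperbolic space on which $G$ acts acylindrically), and that any such element has virtually cyclic centralizer $C_G(g)$. Granting these two inputs, the proof reduces to a short computation with centralizers in a direct product.

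First I would pick a generalized loxodromic $g \in G$ and write $g = (g_1, g_2)$ with $g_i \in G_i$. Since the two factors centralize each other, the centralizer of $g$ splits as
\[ C_G(g) \;=\; C_{G_1}(g_1) \times C_{G_2}(g_2). \]
A direct product of two groups cannot be virtually cyclic unless one factor is finite: otherwise the product would contain $\Z \times \Z$ up to finite index, and a virtually $\Z^2$ group has quadratic growth, whereas virtually cyclic groups have linear growth. Thus, after relabeling, I may assume $C_{G_2}(g_2)$ is finite. In particular, $g_2 \in C_{G_2}(g_2)$ has finite order $n$.

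Next, I would replace $g$ by $g^n = (g_1^n, 1_{G_2})$. The set of generalized loxodromic elements is closed under taking nonzero powers, since a nonzero power of a loxodromic isometry is again loxodromic. Therefore $g^n$ is still generalized loxodromic, and its centralizer is
\[ C_G(g^n) \;=\; C_{G_1}(g_1^n) \times G_2. \]
Applying the virtually cyclic property of $C_G(g^n)$ together with the same direct-product obstruction, the factor $G_2$ is forced to be finite, as desired.

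The hardest piece of the argument is the cited input that centralizers of generalized loxodromic elements in acylindrically hyperbolic groups are virtually cyclic; this is the genuine contribution of AH theory that the proof rests on. The propositions already recorded in the preliminaries are not quite strong enough on their own: Proposition \ref{amenable} would suffice only if one of $G_1, G_2$ were known to be amenable, which is not automatic, and Proposition \ref{normalityAH} would only tell us that each infinite factor is itself acylindrically hyperbolic, which does not immediately conflict with the direct-product decomposition without further centralizer information.
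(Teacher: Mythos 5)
The paper offers no proof of this proposition: it is quoted directly from Osin's work (\cite{osin2016acylindrically}, Corollary 7.3(b)) and used as a black box, so there is no in-paper argument to compare yours against. That said, your blind proof is essentially correct and is a legitimate route to the result. The two external inputs you isolate are genuine: the existence of a generalized loxodromic element is immediate from the definition of a non-elementary acylindrical action recorded in the preliminaries (two independent loxodromic isometries), and the virtual cyclicity of $C_G(g)$ follows from Osin's theorem that a loxodromic element for an acylindrical action lies in a unique maximal virtually cyclic subgroup $E_G(g)$, which contains the centralizer. Granting these, the splitting $C_G(g)=C_{G_1}(g_1)\times C_{G_2}(g_2)$, the passage to $g^n$ once $C_{G_2}(g_2)$ is finite (so $g_2^n=1$ while $g^n\neq 1$ remains generalized loxodromic), and the conclusion from $C_G(g^n)=C_{G_1}(g_1^n)\times G_2$ being virtually cyclic with $C_{G_1}(g_1^n)\supseteq\langle g_1^n\rangle$ infinite are all sound. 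One phrasing to tighten: a direct product of two infinite groups need not contain $\Z\times\Z$ up to finite index in general (take two infinite torsion groups); the claim is valid here only because the factors are subgroups of a virtually cyclic group, hence each infinite factor is virtually infinite cyclic, and you should make that intermediate step explicit. You are also right that Propositions \ref{amenable} and \ref{normalityAH} alone do not suffice without some such centralizer (or amenability-of-a-factor) input; indeed the standard derivations of Osin's Corollary 7.3(b) pass through the same $E_G(g)$ machinery, typically by showing one factor lands inside $E_G(g)$ for a loxodromic $g$ in the other factor, so your argument is close in spirit to the source while being slightly more self-contained.
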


Proposition~\ref{normalityAH} shows that Baumslag-Solitar group $BS(m,n)=\langle a,b|b^{-1}a^m b = a^n\rangle)$ with nonzero $m,n$ is not acylindrically hyperbolic. This is because the subgroup $\langle a\rangle$ is s-normal subgroup of $BS(m,n)$.

\section{Acylindrical hyperbolicity of $\Out(A_\Gamma)$} \label{sec:AH}

From this section onwards, we use the notation $f \in \Out(A_{\Gamma})$ even though $f$ represents an element in $\Aut (A_{\Gamma})$.

\subsection{$\Gamma$ with no SIL}

We briefly introduce the standard representation of $\Out(A_{\Gamma})$. The abelianization map $A_{\Gamma}\to \Z^n$, where $n$ is the number of vertices of $\Gamma$, induces a representation $\rho : \Out(A_{\Gamma})\to \GL_n(\Z)$. In this setting, the kernel $\mathrm{IA}_{\Gamma}=\ker \rho$ is called the Torelli subgroup of $\Out^*(A_{\Gamma})$.

Day and Wade showed in their papers \cite{day2009symplectic} and \cite{wade2012symmetries} that the Torelli subgroup is generated by all partial conjugations and commutator transvections $[R_{uv},R_{uw}]$. This fact guarantees that $\mathrm{IA}_{\Gamma}$ is a normal subgroup of $\Out^*(A_{\Gamma})$. Furthermore, the kernel of the restriction of $\rho$ to $\Out^*(A_{\Gamma})$ is the same as $\mathrm{IA}_{\Gamma}$. If $\Gamma$ has no SIL, then no commutator transvections exist so that $\mathrm{IA}_{\Gamma}$ is generated by all partial conjugations, which is finitely generated free-abelian by Lemma \ref{lem:commutingrelation}.

In addition, the representation $\rho$ can be modified by reordering the vertices to obtain a group of block-lower triangular matrices as the image of $\rho$. This gives the following exact sequence. See \cite{{guirardel2018vastness}} for details.

\begin{prop}\cite[Proposition 4.1]{guirardel2018vastness}
    Suppose $\Gamma$ has no SIL. Then there is a short exact sequence
    \[
       1 \to P \to \Out^*(A_{\Gamma}) \to \prod_{i=1}^k\SL_{n_i}(\Z) \to 1
    \]
    where $P$ is finitely generated nilpotent, and $n_1,\ldots,n_k$ are the sizes of the equivalence classes in $\Gamma.$
\end{prop}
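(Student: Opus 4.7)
The plan is to construct the short exact sequence directly via the standard representation $\rho \colon \Out^*(A_\Gamma) \to \GL_n(\Z)$. As noted in the paragraphs preceding the proposition, the no-SIL hypothesis makes $\ker(\rho|_{\Out^*(A_\Gamma)}) = \mathrm{IA}_\Gamma$ generated purely by partial conjugations, and by Lemma \ref{lem:commutingrelation} all of these pairwise commute, so $\mathrm{IA}_\Gamma$ is finitely generated free abelian.

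First I would choose an ordering of the vertices of $\Gamma$ refining the link--star partial order $\leq$, grouping each equivalence class together and placing $[v]$ before $[w]$ whenever $[v] < [w]$. Since a transvection $R_{vw}$ with $v \leq w$ abelianizes to an elementary matrix of the form $I + E_{wv}$ and every partial conjugation abelianizes to the identity, in this basis $\rho(\Out^*(A_\Gamma))$ is contained in the group of block triangular matrices with diagonal blocks of sizes $n_1,\dots,n_k$.

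Composing $\rho$ with projection onto the diagonal blocks gives a homomorphism $\pi \colon \Out^*(A_\Gamma) \to \prod_{i=1}^k \GL_{n_i}(\Z)$. For each equivalence class of size $n_i$, every ordered pair of distinct vertices $v_a, v_b$ inside the class satisfies $v_a \leq v_b$, so each transvection $R_{v_a v_b}$ is defined and its image is an elementary matrix in the $i$-th block; since elementary matrices generate $\SL_{n_i}(\Z)$, the map $\pi$ is surjective onto $\prod_i \SL_{n_i}(\Z)$. Every generator of $\Out^*(A_\Gamma)$ has determinant $1$ under $\rho$, so the image cannot be larger, giving equality.

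It remains to prove $P := \ker \pi$ is finitely generated nilpotent. The short exact sequence
\[ 1 \to \mathrm{IA}_\Gamma \to P \to \rho(P) \to 1 \]
exhibits $P$ as an extension of $\rho(P)$ by the free abelian group $\mathrm{IA}_\Gamma$, and $\rho(P)$ sits inside the block unitriangular subgroup of $\GL_n(\Z)$, hence is finitely generated nilpotent. The main obstacle I anticipate is promoting this to nilpotence of $P$ itself: one needs the conjugation action of $P$ on $\mathrm{IA}_\Gamma$ to be compatible with the filtration coming from the unipotent structure, so that iterated commutators descend through the filtration and eventually vanish. For this I would invoke Theorem \ref{poutpresentation}, whose explicit commutator relations among partial conjugations and transvections allow one to check that $[R_{vw},\mathrm{IA}_\Gamma]$ lands in a strictly deeper piece of the unipotent series than $R_{vw}$ itself, and then conclude by induction on the nilpotence class of $\rho(P)$. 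Finite generation of $P$ follows at once from finite generation of $\mathrm{IA}_\Gamma$ together with the finitely many off-diagonal transvection generators $R_{vw}$ with $v \leq w$ and $v \not\sim w$.
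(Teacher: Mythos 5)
The paper does not actually prove this proposition: it is quoted from Guirardel--Sale, and the surrounding paragraphs only sketch why $\mathrm{IA}_\Gamma$ is free abelian in the no-SIL case and why $\rho$ can be put in block-triangular form. Your outline follows exactly that route, and most of it is sound: the ordering of the vertices refining the link--star order, the identification of the diagonal blocks, and the extension $1 \to \mathrm{IA}_\Gamma \to P \to \rho(P) \to 1$. One small looseness: ``every generator has determinant $1$'' is not by itself enough to pin the image to $\prod_i \SL_{n_i}(\Z)$, since a block-triangular matrix of global determinant $1$ can have diagonal blocks of determinant $-1$; what actually saves you is that each generator of $\Out^*(A_\Gamma)$ (a transvection or a partial conjugation) has each diagonal block equal to the identity or to an elementary matrix, so the blockwise determinants are all $1$.

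The genuine gap is the step you yourself flag as the main obstacle: promoting ``free-abelian-by-nilpotent'' to ``nilpotent.'' The tool you propose, Theorem \ref{poutpresentation}, cannot do this job, because that presentation contains only relations among partial conjugations and says nothing about how a transvection conjugates a partial conjugation. The relations you actually need are twisted commutator identities of the form $R_{va} P_v^{D} R_{va}^{-1} = P_a^{D} P_v^{D}$ (instances appear in the proof of Theorem \ref{thm:withaddcpnts}); from these one verifies that the transvections $R_{vw}$ with $v \leq w$, $v \not\sim w$ act unipotently on $\mathrm{IA}_\Gamma$ compatibly with the block filtration, which is what makes the lower central series of $P$ terminate. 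Relatedly, your closing claim that $P$ is generated by $\mathrm{IA}_\Gamma$ together with those off-diagonal transvections is not automatic from the surjectivity of $\pi$ --- identifying a generating set of the kernel requires its own argument, and this is precisely the content of Lemma \ref{nilpotent} (Guirardel--Sale, Lemma 4.5), which the paper states separately.
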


The above proposition allows us to classify acylindrically hyperbolic $\Out(A_{\Gamma})$ when $\Gamma$ has no SIL with using algebraic properties of acylindrically hyperbolic groups. The definition of acylindrically hyperbolic groups is not required to prove the following classification.

\begin{cor}\label{nosilpair}
    Suppose $\Gamma$ has no SIL. Then the following are equivalent:
    \begin{enumerate}
        \item $\Out(A_{\Gamma})$ is acylindrically hyperbolic;
        \item $\Out(A_\Gamma)$ is virtually free and not virtually cyclic;
        \item $\Out^*(A_{\Gamma})$ is isomorphic to $\SL_2(\Z)$;
        \item $\Gamma-\st(x)$ is connected for each $x\in \Gamma$, and there exist two equivalent vertices $a\sim b$ such that if $v\leq w$ for some distinct vertices $v,w\in \Gamma$, then $\{v,w\}=\{a,b\}$.
    \end{enumerate}
\end{cor}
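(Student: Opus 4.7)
The plan is to establish the cycle $(4) \Rightarrow (3) \Rightarrow (2) \Rightarrow (1) \Rightarrow (4)$. The preceding short exact sequence together with Propositions \ref{normalityAH}, \ref{amenable}, and \ref{product} carry most of the weight; the hard direction will be $(1) \Rightarrow (4)$, where I need to simultaneously extract several structural conclusions from acylindrical hyperbolicity.

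For the three easy directions: $(4) \Rightarrow (3)$ follows by observing that $\Gamma-\st(x)$ connected for every $x$ makes every partial conjugation trivial in $\Out(A_\Gamma)$ via relation (5) of Theorem \ref{poutpresentation}, and since $\Gamma$ has no SIL there are no commutator transvections, so $\mathrm{IA}_\Gamma = 1$; the link-star hypothesis then ensures that the only transvections are $R_{ab}$ and $R_{ba}$, which land in the diagonal block of $\rho$, so $P=1$ and $\prod_i \SL_{n_i}(\Z)$ collapses to $\SL_2(\Z)$, giving $\Out^*(A_\Gamma)\cong \SL_2(\Z)$. $(3) \Rightarrow (2)$ is immediate: $\SL_2(\Z)$ is virtually free and contains a non-abelian free subgroup, and both virtual freeness and non-virtual-cyclicity pass from a finite-index subgroup to the ambient group. $(2) \Rightarrow (1)$ is the standard observation that a virtually free, non-virtually-cyclic group is non-elementary hyperbolic, hence acylindrically hyperbolic.

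For the main direction $(1) \Rightarrow (4)$, I first pass to $\Out^*(A_\Gamma)$: it is finite-index normal, hence s-normal, in $\Out(A_\Gamma)$, so Proposition \ref{normalityAH} yields that $\Out^*(A_\Gamma)$ is acylindrically hyperbolic. The kernel $P$ of the preceding exact sequence is normal and nilpotent, so amenable, and Proposition \ref{amenable} then forces $P$ to be finite. Invoking the standard fact that quotients by finite normal subgroups preserve acylindrical hyperbolicity, $\prod_i \SL_{n_i}(\Z) \cong \Out^*(A_\Gamma)/P$ is acylindrically hyperbolic. Proposition \ref{product} permits at most one infinite factor, and the all-trivial case would make $\Out^*(A_\Gamma)$ itself finite, a contradiction. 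Thus exactly one $n_i \geq 2$; that lone $\SL_{n_i}(\Z)$ must itself be acylindrically hyperbolic, and since $\SL_n(\Z)$ is not acylindrically hyperbolic for $n \geq 3$ (as noted in the introduction), we must have $n_i = 2$. This yields a unique equivalence class $\{a,b\}$ of size two.

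Finally, I translate finiteness of $P$ into the remaining two parts of (4). A nontrivial partial conjugation $P_v^C$ has infinite order and lies in $\mathrm{IA}_\Gamma \subseteq P$, so every $P_v^C$ must be trivial in $\Out$; by relation (5) of Theorem \ref{poutpresentation}, this is equivalent to $|I_v|\leq 1$, i.e., $\Gamma-\st(v)$ is connected or empty. Similarly, any strict $v<w$ would give a transvection $R_{vw}$ whose image under $\rho$ is a strictly block-lower-triangular elementary matrix of infinite order inside $P$, again contradicting finiteness. The main obstacle is really the careful bookkeeping here: $P$ consists of a Torelli piece (partial conjugations) and an off-diagonal piece (strict link-star transvections), and both must be killed independently before condition (4) is recovered in full.
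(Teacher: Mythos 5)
Your proposal is correct and follows essentially the same route as the paper: the Guirardel--Sale exact sequence, Lemma \ref{nilpotent} describing the generators of $P$, and Propositions \ref{amenable} and \ref{product} do all the work, exactly as in the paper's proof (which arranges the implications slightly differently but uses the same ingredients). One small remark: your detour through ``$P$ is finite, so pass to the quotient $\Out^*(A_\Gamma)/P$'' invokes a fact (acylindrical hyperbolicity descends to quotients by finite normal subgroups) that is not among the paper's quoted tools and is unnecessary here --- since every generator of $P$ listed in Lemma \ref{nilpotent} has infinite order, finiteness of $P$ immediately forces $P=1$ (as you observe at the end), after which $\Out^*(A_\Gamma)\cong\prod_i\SL_{n_i}(\Z)$ on the nose and no quotient argument is needed.
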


The following lemma is essential for proving this corollary.

\begin{lem}\cite[Lemma 4.5]{guirardel2018vastness}\label{nilpotent}
    The subgroup $P$ is generated by partial conjugations and transvections $R_{ab}$ for $a\leq b$ but $a\not\sim b$.
\end{lem}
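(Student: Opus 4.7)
The plan is to identify $P$ via the block-lower-triangular structure of $\rho(\Out^*(A_\Gamma))$. Let $H$ be the subgroup generated by the proposed list (all partial conjugations together with all transvections $R_{ab}$ for $a\leq b$ but $a\not\sim b$); the goal is to show $H=P$.

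For the containment $H\subseteq P$, partial conjugations act trivially on the abelianization and hence lie in $\mathrm{IA}_\Gamma\subseteq P$. For a transvection $R_{ab}$ with $a\leq b$ and $a\not\sim b$, I would order the vertices along a linear extension of the partial order on equivalence classes, listing vertices in smaller classes before those in strictly larger ones. Under $\rho$, $R_{ab}$ becomes an elementary matrix $I+E_{b,a}$ which in this ordering sits strictly below all diagonal $\SL_{n_i}(\Z)$-blocks, so its projection to $\prod\SL_{n_i}(\Z)$ is trivial and the matrix lies in $P$.

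For the reverse inclusion $P\subseteq H$, note that the no-SIL hypothesis forces $\mathrm{IA}_\Gamma$ to be generated by partial conjugations, so $\mathrm{IA}_\Gamma\subseteq H$ and one may work in $\overline{\Out}:=\Out^*(A_\Gamma)/\mathrm{IA}_\Gamma\cong \rho(\Out^*(A_\Gamma))$. I would show that the image decomposes as a semidirect product $U\rtimes\prod\SL_{n_i}(\Z)$, where $U$ is the strictly block-lower-unipotent part. Uniqueness of the $LD$-factorization in block-triangular form then identifies the kernel $\overline{P}$ of the projection to the block-diagonal as exactly $U$. A Steinberg-type computation shows $U\subseteq\overline{H}$: starting from an elementary matrix $I+E_{b,a}$ with $[a]<[b]$, conjugation by elementary matrices inside the two diagonal blocks (which are available because within each equivalence class every pair of distinct vertices is mutually $\leq$, so the corresponding transvections exist) moves the $1$ to any entry of that off-diagonal block, and the commutator identity $[I+E_{b,a},\,I+E_{c,b}]=I+E_{c,a}$ produces entries in deeper off-diagonal blocks.

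The main obstacle is verifying the semidirect product decomposition of the image: one must show simultaneously that the block-diagonal quotient attains all of $\prod\SL_{n_i}(\Z)$ (using that within an equivalence class of size $n_i$ every pair of vertices $a\neq b$ satisfies both $a\leq b$ and $b\leq a$, so the transvections $R_{ab},R_{ba}$ realize standard elementary generators of $\SL_{n_i}(\Z)$) and that the lower-unipotent piece $U$ is normalized by the block-diagonal part, so that the $LD$-decomposition stays inside the image rather than in some larger block-triangular group. Once these two facts are in hand, the equality $\overline{P}=\overline{H}$, and hence $P=H$, follows.
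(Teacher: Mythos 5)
The paper does not prove this statement at all: it is quoted verbatim from Guirardel--Sale \cite[Lemma 4.5]{guirardel2018vastness}, so there is no in-paper argument to measure yours against. On its own terms your outline is viable. The containment $H\subseteq P$ is correct as written: partial conjugations lie in $\mathrm{IA}_\Gamma=\ker\rho$, and for $a\leq b$ with $a\not\sim b$ the class $[a]$ is strictly below $[b]$ in any compatible ordering, so $\rho(R_{ab})=I+E_{b,a}$ lies strictly below the block diagonal and dies in $\prod\SL_{n_i}(\Z)$. Your reduction of the reverse containment modulo $\mathrm{IA}_\Gamma$ is also correct, since the no-SIL hypothesis removes the commutator transvections from the Day--Wade generating set.

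The one place where your write-up points in the wrong direction is the ``Steinberg-type computation.'' Producing more elementary matrices inside $\overline{H}$ by conjugation and commutators only improves the lower bound on $\overline{H}$; what you actually need is that an \emph{arbitrary} element of $U\cap\mathrm{im}\,\rho$ lies in $\overline{H}$. The working step is the opposite rewriting: take a word in the images of all generators of $\Out^*(A_\Gamma)$ representing such an element and push every diagonal-block transvection to the right. This requires checking that conjugating an allowed sub-diagonal elementary $I+E_{b,a}$ (with $a\leq b$, $a\not\sim b$) by a diagonal elementary $I\pm E_{d,c}$ (with $c\sim d$) is again a product of allowed sub-diagonal elementaries; the matrix identities give, e.g., $(I+E_{d,b})(I+E_{b,a})(I+E_{d,b})^{-1}=(I+E_{b,a})(I+E_{d,a})$, and the new factors correspond to honest transvections in your list precisely because $\leq$ is transitive and descends to equivalence classes (so $a'\sim a\leq b\sim b'$ forces $a'\leq b'$ and $a'\not\sim b'$). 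Once the diagonal part is collected on the right it must be trivial, because the block-diagonal projection of the original element is trivial and $L\cap U=\{1\}$, and the remaining left factor exhibits the element as a member of $\overline{H}$. You flag this normalization as the main obstacle, which is the right instinct; with the order-theoretic facts from Section \ref{sec:prelim} it is a finite check, and the argument closes. The full semidirect-product decomposition is not really needed --- only that the block-diagonal projection is a homomorphism killing $U$.
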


\begin{proof}[Proof of Corollary \ref{nosilpair}]
    ($(2)\Longrightarrow(1)$) If $\Out(A_\Gamma)$ is virtually free, so is hyperbolic. Since non-virtually-cyclic hyperbolic groups are acylindrically hyperbolic, $\Out(A_\Gamma)$ is acylindrically hyperbolic.
    
    ($(3)\Longrightarrow(2)$) Suppose $\Out^*(A_{\Gamma})$ is isomorphic to $\SL_2(\Z)$. Since $\SL_2(\Z)$ is virtually nonabelian free and $\Out^*(A_{\Gamma})$ is a finite index subgroup of $\Out(A_{\Gamma})$, $\Out(A_{\Gamma})$ is also virtually nonabelian free. Therefore, $\Out(A_{\Gamma})$ is virtually free and not virtually cyclic.
    
    ($(1)\Longrightarrow(3)$) Suppose $\Out(A_{\Gamma})$ is acylindrically hyperbolic. Then so is $\Out^*(A_{\Gamma})$. By Lemma \ref{nilpotent}, we know that the chosen generators of $P$ have infinite orders. Since acylindrically hyperbolic groups cannot have finitely generated infinite nilpotent normal subgroups, $P$ must be trivial by Proposition \ref{amenable}. Hence $\Out^*(A_{\Gamma})$ is isomorphic to $\prod_{i=1}^k\SL_{n_i}(\Z)$. By Proposition \ref{product}, at most one $n_i$ can be greater than 1. Thus, we have $\Out^*(A_{\Gamma})\cong \SL_n(\Z)$ for some $n$. It is well-known that $\SL_n(\Z)$ is not acylindrically hyperbolic when $n\geq 3$. Therefore, $\Out^*(A_{\Gamma})$ is acylindrically hyperbolic if and only if $n=2$.
    
    ($(1)\Longrightarrow(4)$) If $\Out(A_{\Gamma})$ is acylindrically hyperbolic, then $P$ is trivial as before. By Lemma \ref{nilpotent}, this implies that there is no nontrivial partial conjugation and no transvection of the form $R_{vw}$ for $v\leq w$, $v\not\sim w$. Thus, $v\leq w$ always implies that $v\sim w$. Furthermore, we can conclude without loss of generality that $n_1=2$ and $n_i=1$ for $i\neq 1$ from the short exact sequence. Hence $v\sim w$ implies that $\{v,w\}=\{a,b\}$. The connectedness of $\Gamma-\st(x)$ for each $x\in\Gamma$ follows from the nonexistence of nontrivial partial conjugations.
    
    ($(4)\Longrightarrow(3)$) The assumption (4) implies that $P$ is trivial, so $\Out^*(A_{\Gamma})$ is isomorphic to $\prod_{i=1}^k\SL_{n_i}(\Z)$. Since there are only two vertices that are equivalent, we can deduce without loss of generality that $n_1=2$ and $n_i=1$ for $i\neq 1$. This implies that $\Out^*(A_{\Gamma})$ is isomorphic to $\SL_2(\Z)$.
\end{proof}

Figure \ref{ExampleofSL2(Z)} illustrates an example satisfying the conditions of Corollary \ref{nosilpair}. It is clear that the complement of the star of each vertex is connected, and that the two tips of the graph induce transvections that generate $\SL_2(\Z)$.

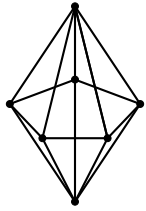
\begin{figure}[ht]
\centering
\begin{tikzpicture}[scale = 1.3]
\draw[black, thick] (-1/3,0.65) -- (0,2);
\draw[black, thick] (1/3,0.65) -- (0,2);
\draw[black, thick] (1/3,0.65) -- (-1/3,0.65);
\draw[black, thick] (1/3,0.65) -- (2/3,1);
\draw[black, thick] (2/3,1) -- (0,2);
\draw[black, thick] (-1/3,0.65) -- (-2/3,1);
\draw[black, thick] (-2/3,1) -- (0,2);
\draw[black, thick] (1/3,0.65) -- (0,2);
\draw[black, thick] (0,5/4) -- (0,2);
\draw[black, thick] (0,5/4) -- (2/3,1);
\draw[black, thick] (0,5/4) -- (-2/3,1);
\draw[black, thick] (0,5/4) -- (0,0);
\draw[black, thick] (2/3,1) -- (0,0);
\draw[black, thick] (-2/3,1) -- (0,0);
\draw[black, thick] (1/3,0.65) -- (0,0);
\draw[black, thick] (-1/3,0.65) -- (0,0);
\filldraw[black] (0,2) circle (1pt);
\filldraw[black] (-1/3,0.65) circle (1pt);
\filldraw[black] (1/3,0.65) circle (1pt);
\filldraw[black] (-2/3,1) circle (1pt);
\filldraw[black] (2/3,1) circle (1pt);
\filldraw[black] (0,5/4) circle (1pt);
\filldraw[black] (0,0) circle (1pt);
\end{tikzpicture}
\caption{An example of $\Gamma$ with $\Out^*(A_{\Gamma})\cong \SL_2(\Z)$.}
\label{ExampleofSL2(Z)}
\end{figure}

\subsection{$\Gamma$ with SIL}

In this subsection, we assume without mentioning that $\Gamma$ has a SIL-pair.

\begin{definition}
    We say that a set of vertices $S=\{w_1,\ldots,w_n\}$ is a maximal SIL-pair system if the following conditions hold:
    \begin{itemize}
        \item $(w_j,w_k)$ is a SIL-pair for every $j\neq k$.
        \item $\cap_{i=1}^{n} \lk(w_i)=\lk(w_j)\cap \lk(w_k)$ for every $j\neq k$.
        \item If $(v,w)$ is a SIL-pair such that $|\lk(v)\cap \lk(w)| < |\cap_{i=1}^{n} \lk(w_i)|$, then there is a path from $v$ to $w$ in $\Gamma - (\lk(v)\cap \lk(w))$.
        \item There is a unique component of $\Gamma - (\cap_{i=1}^{n} \lk(w_i))$ containing $w_i$ and this component does not contain $w_j$ for $j\neq i$.
        \item If $(v,w_i)$ is a SIL-pair for some $i$, then $v$ is contained in a component of $\Gamma - (\cap_{j=1}^{n} \lk(w_j))$ containing some $w_k$.
    \end{itemize}
\end{definition}

Notice that if a maximal SIL-pair system $S=\{w_1,\ldots,w_n\}$ exists, the last condition in the definition of a maximal SIL-pair system implies that $n\geq 2$.

We will prove the existence of a maximal SIL-pair system using the following lemma.

\begin{lem}\label{minimality}
    Let $(v,w)$ be a SIL-pair and let $x$ be a vertex in a shared component of $(v,w)$. Then $\lk(v)\cap \lk(x)\subset \lk(w)$ and $\lk(w)\cap \lk(x)\subset \lk(v)$.
\end{lem}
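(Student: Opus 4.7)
The plan is a direct proof by contradiction, which amounts to unfolding the definition of a SIL-pair and using the fact that the shared component $C$ containing $x$ excludes both $v$ and $w$. By symmetry of $v$ and $w$ in the hypothesis, it suffices to prove the first inclusion $\lk(v)\cap\lk(x)\subset \lk(w)$; the second one will follow by swapping the roles of $v$ and $w$.

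So suppose toward a contradiction that there is some $y\in \lk(v)\cap \lk(x)$ with $y\notin \lk(w)$. First I would record the easy bookkeeping: $y\neq v$ (since $y\in\lk(v)$ and the graph is simplicial), $y\neq w$ (since $y\in\lk(v)$ but $v,w$ are non-adjacent as $(v,w)$ is a SIL-pair), and consequently $y\notin \lk(v)\cap\lk(w)$ because $y$ fails to lie in $\lk(w)$. Hence $y$ is a vertex of the subgraph $\Gamma-(\lk(v)\cap\lk(w))$. Similarly, $v\notin \lk(v)\cap \lk(w)$, so $v$ also lies in $\Gamma-(\lk(v)\cap\lk(w))$.

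Next I would chase the two edges produced by the hypothesis $y\in \lk(v)\cap\lk(x)$. Since $\{x,y\}$ is an edge of $\Gamma$ and both $x,y$ lie in $\Gamma-(\lk(v)\cap\lk(w))$, the vertex $y$ belongs to the same connected component of that subgraph as $x$, namely the shared component $C$. On the other hand, $\{v,y\}$ is also an edge of $\Gamma$ whose endpoints both lie in $\Gamma-(\lk(v)\cap \lk(w))$, so $v$ lies in the same connected component of $\Gamma-(\lk(v)\cap\lk(w))$ as $y$, i.e. in $C$. This contradicts the definition of shared component, which requires that $C$ contain neither $v$ nor $w$. Therefore $y\in\lk(w)$, proving $\lk(v)\cap\lk(x)\subset \lk(w)$.

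There is no real obstacle here; the only subtlety is noticing that to apply the defining property of $C$ one must verify that $v$ (respectively $w$) is actually a vertex of the ambient subgraph $\Gamma-(\lk(v)\cap\lk(w))$, which is immediate since no vertex lies in its own link. The argument also makes it clear why the hypothesis that $x$ lies in a \emph{shared} component (rather than, say, a dominating one) is crucial: one needs both $v$ and $w$ to be forbidden from the component of $x$ in order to contradict the respective symmetric conclusion.
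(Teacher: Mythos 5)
Your proof is correct and is essentially the paper's own argument: both exhibit the path $v$--$y$--$x$ through a common neighbor and derive a contradiction with the fact that the shared component containing $x$ is separated from $v$. The only cosmetic difference is that you run the separation argument in $\Gamma-(\lk(v)\cap\lk(w))$ (using the SIL definition directly), while the paper phrases it in $\Gamma-\lk(w)$; both are valid since a shared component is a component of each of these subgraphs avoiding $v$ and $w$.
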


\begin{proof}
    If there is a vertex $a\in (\lk(v)\cap\lk(x))-\lk(w)$, then the path joining $v$, $a$, and $x$ in a row is contained in $\Gamma-\lk(w)$. However, this is impossible because $v$ and $x$ must be contained in different components of $\Gamma-\lk(w)$. Similarly, we have $\lk(w)\cap \lk(x)\subset \lk(v)$.
\end{proof}

We now prove that a maximal SIL-pair system exists under certain conditions.

\begin{lem}\label{existenceofmaxSILpairsystem}
    Suppose there is a SIL-pair $(v,w)$ in $\Gamma$ such that there is no path between $v$ and $w$ in $\Gamma - (\lk(v)\cap \lk(w))$. Then there is a maximal SIL-pair system.
\end{lem}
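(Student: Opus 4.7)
The plan is to extract a maximal SIL-pair system from a minimizer of $|\lk(v)\cap\lk(w)|$. By hypothesis the set of SIL-pairs $(v',w')$ such that $v'$ and $w'$ lie in distinct components of $\Gamma-(\lk(v')\cap\lk(w'))$ is non-empty; since $\Gamma$ is finite I choose such a pair $(v,w)$ minimizing $|\lk(v)\cap\lk(w)|$ and set $L:=\lk(v)\cap\lk(w)$. By the decomposition lemma recalled just before Definition \ref{def:SIL}, the components of $\Gamma-L$ consist of the two dominating components $D_v\ni v$ and $D_w\ni w$, together with the shared components $C_1,\ldots,C_l$ of $(v,w)$.

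I build $S$ iteratively: start with $v,w\in S$, and for each shared component $C_i$ either add one vertex $u_i\in C_i$ satisfying $L\subseteq\lk(u_i)$ to $S$ (and call $C_i$ \emph{used}) or leave it alone (and call it \emph{missed}). Writing $S=\{w_1,\ldots,w_n\}$, any two distinct elements $w_j,w_k\in S$ lie in distinct components of $\Gamma-L$, so they are non-adjacent and $L\subseteq\lk(w_j)\cap\lk(w_k)$. Removing the possibly larger set $\lk(w_j)\cap\lk(w_k)$ only refines the component decomposition of $\Gamma-L$, so $w_j$ and $w_k$ remain in distinct components of $\Gamma-(\lk(w_j)\cap\lk(w_k))$ and the component of any other vertex of $S$ supplies a shared component; hence $(w_j,w_k)$ is a separated SIL-pair. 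The minimality of $|L|$ then forces $\lk(w_j)\cap\lk(w_k)=L$, yielding conditions (1) and (2) and the equality $\cap_i\lk(w_i)=L$. Condition (3) holds by the choice of $(v,w)$, and condition (4) is immediate from the construction.

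The main obstacle is condition (5): for every missed component $C$, every $u'\in C$, and every $j$, the pair $(u',w_j)$ must fail to be a SIL-pair. I plan to argue by contradiction. Assume $(u',w_j)$ is a SIL-pair and set $L':=\lk(u')\cap\lk(w_j)$. Missedness of $C$ gives some $y\in L\setminus\lk(u')$; this $y$ lies in $\Gamma-L'$ and is adjacent to every $w_i\in S$, so all members of $S$ sit in the same $(\Gamma-L')$-component as $w_j$. Consequently no shared component of $(u',w_j)$ meets $S$, and a short connectivity argument places every such shared component $C^*$ inside $V\setminus L$, hence inside one of the $C_i$'s. I then apply Lemma \ref{minimality} to $(u',w_j)$ at a vertex of $C^*$: by tracing a path in $C_i$ I expect to produce either a vertex $z\in C^*$ with $L\subseteq\lk(z)$, whence $L\subseteq\lk(u')$ contradicting the missedness of $C$, or a new separated SIL-pair whose link intersection is strictly smaller than $L$, contradicting the minimality of $L$. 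Making this final case analysis airtight, while keeping track of the interaction between the two separators $L$ and $L'$, is where the real work lies.
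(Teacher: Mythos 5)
Your construction is essentially the one in the paper: the paper also fixes a separated SIL-pair $(w_1,w_2)$ minimizing $|\lk(w_1)\cap\lk(w_2)|$ and augments it one vertex at a time from shared components containing no previously chosen vertex, and your selection criterion ``$C_i$ contains $u_i$ with $L\subseteq\lk(u_i)$'' picks out the same vertices as the paper's ``$C_i$ contains a vertex forming a separated SIL-pair with $w_1$''. But one of your justifications is wrong. The equality $\lk(w_j)\cap\lk(w_k)=L$ is \emph{not} forced by the minimality of $|L|$: minimality only bounds $|\lk(w_j)\cap\lk(w_k)|$ from below, and you already have that bound from the containment $L\subseteq\lk(w_j)\cap\lk(w_k)$; nothing you cite excludes a strict containment. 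The reverse inclusion $\lk(w_j)\cap\lk(w_k)\subseteq L$ is exactly what Lemma \ref{minimality} gives, and it is how the paper argues: applied to the SIL-pair $(v,w)$ and a vertex $u_i$ of a shared component it yields $\lk(v)\cap\lk(u_i)\subseteq\lk(w)$, hence $\lk(v)\cap\lk(u_i)\subseteq L$; applied to the SIL-pair $(v,u_i)$, whose link intersection is now known to equal $L$, and the vertex $u_j$ lying in one of its shared components, it yields $\lk(u_i)\cap\lk(u_j)\subseteq\lk(v)$, hence $\lk(u_i)\cap\lk(u_j)\subseteq L$. With that substitution conditions (1)--(4) go through.

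The more serious problem is condition (5), which you correctly single out as the main obstacle and then do not prove: what you give is a plan ending with the admission that making the case analysis airtight ``is where the real work lies.'' Several intermediate claims in that sketch are themselves unjustified --- for instance, why a shared component $C^*$ of $(u',w_j)$ must land inside a shared component of $(v,w)$ rather than inside a dominating component, and why producing a vertex $z\in C^*$ with $L\subseteq\lk(z)$ would force $L\subseteq\lk(u')$ (these vertices need not lie in the same component of $\Gamma-L$). As submitted, the verification of (5) is incomplete, so the lemma is not proved. For comparison, the paper's own proof simply runs the augmentation until no vertex of a remaining shared component forms a separated SIL-pair with $w_1$ and asserts that the last condition of the definition is then satisfied; it does not spell out why the stopping condition rules out \emph{all} SIL-pairs $(u',w_i)$ with $u'$ in a leftover component, so closing this step carefully would require an argument neither you nor the paper currently supplies.
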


\begin{proof}
    We proceed by induction. We first choose a SIL-pair $(w_1,w_2)$ such that $|\lk(w_1)\cap \lk(w_2)|$ is minimal among all SIL-pairs $(v,w)$ for which there is no path between $v$ and $w$ in $\Gamma - (\lk(v)\cap \lk(w))$. Suppose we have constructed a set of vertices $W_k=\{w_1,\ldots,w_k\}$ (for some $k\geq 2$) such that
    \begin{itemize}
        \item $(w_j,w_l)$ is a SIL-pair for every $j\neq l$,
        \item $\cap_{i=1}^{k} \lk(w_i)=\lk(w_j)\cap \lk(w_l)$ for every $j\neq l$,
        \item $|\lk(w_j)\cap \lk(w_l)|$ is minimal among SIL-pairs $(v,w)$ such that there is no path in $\Gamma-(\lk(v)\cap\lk(w))$ between $v$ and $w$, and
        \item there is a unique component of $\Gamma - (\cap_{i=1}^{k} \lk(w_i))$ containing $w_i$, and this component does not contain $w_j$ for $j\neq i$.
    \end{itemize}
    If there is no vertex $w_{k+1}$ in a shared component of $(w_1,w_2)$ that does not contain any $w_i \in W_k$, such that $(w_1,w_{k+1})$ is a SIL-pair, and that there is no path in $\Gamma-(\lk(w_1)\cap\lk(w_{k+1}))$ between $w_1$ and $w_{k+1}$, then $W_k$ is a maximal SIL-pair system. Otherwise, we add such a vertex $w_{k+1}$. By Lemma \ref{minimality} and the minimality condition, one can obtain $\cap_{i=1}^{k+1} \lk(w_i)=\lk(w_j)\cap \lk(w_l)$ for every $j\neq l$. In addition, $(w_i,w_{k+1})$ is a SIL-pair for any $i$, and each $w_i$ lies in a distinct component of $\Gamma-\cap_{i=1}^{k+1}\lk(w_i)$.
    
    Since $\Gamma$ is a finite simplicial graph, this process must terminate in a finite number of steps until the last condition in the definition of a maximal SIL-pair system is satisfied. Therefore, we obtain a maximal SIL-pair system.
\end{proof}

\begin{remark}
    For an illuminating example where a graph has SIL-pairs but no maximal SIL-pair system exists, one can consider the graphs generated by Wiedmer's construction in \cite{wiedmer2024right}, which will be discussed further at the end of this section. In those graphs, every SIL-pair $(v, w)$ has a path between $v$ and $w$ in $\Gamma - (\lk(v)\cap \lk(w))$, so the process of constructing a maximal SIL-pair system cannot even begin.
\end{remark}

If $\Gamma$ has a maximal SIL-pair system $\{w_1,\ldots,w_n\}$, the vertices of $\Gamma$ can be decomposed as follows:
\[\Gamma=(\bigcap_{i=1}^{n}\lk(w_i))\cup (\bigcupdot_{i=1}^{n} C_i)\cup (\bigcupdot_{j=1}^{m}D_j)\]
where $C_i$ is the component of $\Gamma-(\cap_{i=1}^{n}\lk(w_i))$ containing $w_i$, and $D_j$ is a component of $\Gamma-(\cap_{i=1}^{n}\lk(w_i))$ not containing any $w_i$. Each $C_i$ and $D_j$ is called a \textit{simultaneously shared component} and an \textit{additional component}, respectively. Note that the number of additional components can be zero. For instance, if there is a maximal SIL-pair system on a disconnected graph, then no additional component exists. The graph on the left in Figure \ref{examplesofdecompositions} has three simultaneous shared components and no additional component. The other graph has two simultaneous shared components and one additional component.
\begin{figure}[ht]
\centering
\begin{tikzpicture}[scale=0.8]
\draw[black, thick] (-5,0.5) -- (-6.5,0.5);
\draw[black, thick] (-5,0.5) -- (-3.5,0.5);
\draw[black, thick] (-5,1.5) -- (-6.5,0.5);
\draw[black, thick] (-5,1.5) -- (-3.5,0.5);
\draw[black, thick] (-5,-0.5) -- (-6.5,0.5);
\draw[black, thick] (-5,-0.5) -- (-3.5,0.5);
\draw[blue] (-5,1.5) ellipse (0.3cm and 0.3cm) node[anchor=north east] {$C_1$};
\draw[blue] (-5,0.5) ellipse (0.3cm and 0.3cm) node[anchor=north east] {$C_2$};
\draw[blue] (-5,-0.5) ellipse (0.3cm and 0.3cm) node[anchor=north east] {$C_3$};
\filldraw[black] (-5,1.5) circle (1.5pt) node[anchor=west] {$w_1$};
\filldraw[black] (-5,0.5) circle (1.5pt) node[anchor=west] {$w_2$};
\filldraw[black] (-5,-0.5) circle (1.5pt) node[anchor=west] {$w_3$};
\filldraw[black] (-6.5,0.5) circle (1.5pt);
\filldraw[black] (-3.5,0.5) circle (1.5pt);
\draw[black, thick] (0,1) -- (-1.5,0);
\draw[black, thick] (0,1) -- (1.5,0);
\draw[black, thick] (0,-1) -- (-1.5,0);
\draw[black, thick] (0,-1) -- (1.5,0);
\draw[black, thick] (-1,2) -- (-1.5,0);
\draw[black, thick] (1,2) -- (1.5,0);
\draw[black, thick] (-1,2) -- (1,2);
\draw[red] (0,2) ellipse (1.5cm and 0.3cm) node[anchor=south west] {$D_1$};
\draw[blue] (0,1) ellipse (0.3cm and 0.3cm) node[anchor=north east] {$C_1$};
\draw[blue] (0,-1) ellipse (0.3cm and 0.3cm) node[anchor=north east] {$C_2$};
\filldraw[black] (0,1) circle (1.5pt) node[anchor=west] {$w_1$};
\filldraw[black] (0,-1) circle (1.5pt) node[anchor=west] {$w_2$};
\filldraw[black] (-1.5,0) circle (1.5pt);
\filldraw[black] (1.5,0) circle (1.5pt);
\filldraw[red] (1,2) circle (1.5pt);
\filldraw[red] (-1,2) circle (1.5pt);
\end{tikzpicture}
\caption{Examples of decompositions given by maximal SIL-pair systems.}
\label{examplesofdecompositions}
\end{figure}
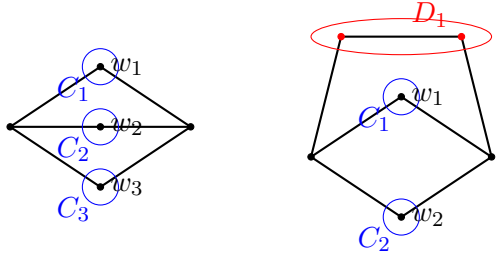

\begin{prop}[Classification of partial conjugations]\label{classofpartialconj}
Let $\Gamma$ be a graph having a maximal SIL-pair system $\{w_1,\ldots,w_n\}$ with the decomposition as above. A partial conjugation $P_v^C$ falls into one of the following categories:
\begin{itemize}
    \item \textbf{Type I: $v \in C_i$ and $\cap_k \lk(w_k) \subset \lk(v)$.}
    \begin{enumerate}
        \item[(1)] $P_v^{C_{i'}}$ for $i'\neq i$.
        \item[(2)] $P_v^{D_j}$ for some $j$.
        \item[(3)] $P_v^C$ where $C\subset C_i$ contains a vertex $w$ such that $\cap_k \lk(w_k)\subset \lk(w)$.
        \item[(4)] $P_v^C$ where $C\subset C_i$ and every vertex $w \in C$ satisfies $\cap_k \lk(w_k)\not\subset \lk(w)$.
    \end{enumerate}
    \item \textbf{Type II: $v \in C_i$ and $\cap_k \lk(w_k) \not\subset \lk(v)$.}
    \begin{enumerate}
        \item[(5)] $P_v^C$ where $C\subset C_i$.
        \item[(6)] $P_v^{D_j}$ for some $j$.
        \item[(7)] $P_v^C$ where $C$ is the unique connected component of $\Gamma-\st(v)$ containing other $C_{i'}$'s for $i'\neq i$.
    \end{enumerate}
    \item \textbf{Type III: $v \in \cap_k \lk(w_k)$.}
    \begin{enumerate}
        \item[(8)] $P_v^C$ where $C$ contains some vertices in $\cap_k \lk(w_k)$.
        \item[(9)] $P_v^C$ where $C$ is a proper subset of $C_i$ for some $i$.
        \item[(10)] $P_v^C$ where $C$ is a proper subset of $D_j$ for some $j$.
    \end{enumerate}
    \item \textbf{Type IV: $v \in D_j$ for some $j$.}
    \begin{enumerate}
        \item[(11)] $P_v^C$ where $C$ is a proper subset of $D_j$.
        \item[(12)] $P_v^{D_{j'}}$ for $j'\neq j$.
        \item[(13)] $P_v^C$ where $C$ is the unique connected component of $\Gamma-\st(v)$ containing all $w_k$'s.
    \end{enumerate}
\end{itemize}
\end{prop}
\begin{proof}
Let us describe the components of $\Gamma-\st(v)$ to classify partial conjugations depending on the decomposition.
\begin{itemize}
    \item \textbf{Type I:} A vertex $v$ in a simultaneously shared component $C_{i}$ may satisfy that $\cap_k \lk(w_k)\subset \lk(v)$. In this case, a component of $\Gamma-\st(v)$ is either $C_{i'}$ for $i'\neq i$, $D_j$, or a proper subset $C\subset C_{i}$. This corresponds to cases (1)-(4).
    \item \textbf{Type II:} If $\cap_k \lk(w_k)\not\subset \lk(v)$ for $v\in C_{i}$, then $\Gamma-\st(v)$ has a unique connected component containing a vertex in $\cap_k \lk(w_k)$. This component contains other $C_{i'}$'s for $i'\neq i$ and some $D_j$'s that are not separated by the vertices in $\cap_k \lk(w_k)$ adjacent to $v$. This means that except for this component, the supports of partial conjugations defined by $v$ are either contained in $C_{i}$ or equal to $D_j$ for some $j$. This corresponds to cases (5)-(7).
    \item \textbf{Type III:} For $v\in \cap_k \lk(w_k)$, components of $\Gamma-\st(v)$ may contain other vertices in $\cap_k \lk(w_k)$. On the other hand, if a component does not contain any vertices in $\cap_k \lk(w_k)$, then it is properly contained in a simultaneously shared component or an additional component. This corresponds to cases (8)-(10).
    \item \textbf{Type IV:} Suppose $v\in D_j$ for some $j$. There is a unique connected component of $\Gamma-\st(v)$ for $v$ in an additional component $D_j$, which contains all $w_k$'s, since $v$ is adjacent to at most one vertex in $\cap_k \lk(w_k)$ by the definition of a maximal SIL-pair system. Except for this component, a component of $\Gamma-\st(v)$ is $D_{j'}$ for $j'\neq j$ or a proper subset of $D_j$. This corresponds to cases (11)-(13).
\end{itemize}
\end{proof}

A partial conjugation of type (1) is called a \textit{dominant partial conjugation}, and type (3) is called a \textit{subordinate partial conjugation}.

The following two criteria demonstrate that $\Out(A_{\Gamma})$ is not acylindrically hyperbolic. By looking at the structure of the maximal SIL-pair system, they detect infinite normal subgroups that are not acylindrically hyperbolic.

\begin{lem}\label{graphwithsingletonintersection}
    Let $\{w_1,\ldots,w_n\}$ be a maximal SIL-pair system in a connected graph $\Gamma$. If $|\cap_i \lk(w_i)|=1$, then $\Out^*(A_{\Gamma})$ has a free-abelian normal subgroup. In particular, $\Out(A_{\Gamma})$ is not acylindrically hyperbolic.
\end{lem}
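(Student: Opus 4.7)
The plan is to realize an infinite free-abelian normal subgroup of $\Out^{*}(A_{\Gamma})$ via the Charney--Vogtmann projection homomorphism. Let $z$ denote the unique vertex in $\bigcap_{i=1}^{n}\lk(w_i)$. Assuming $\Gamma$ is connected (the disconnected case follows by restricting attention to the component containing $z$), Theorem \ref{kerofprojhom} gives that the kernel $\ker P$ of the projection
\[ P:\Out^{0}(A_{\Gamma}) \longrightarrow \prod_{[v]}\Out^{0}(A_{\lk [v]}) \]
is free-abelian and normal in $\Out^{0}(A_{\Gamma})$; its standard generators (leaf transvections and nontrivial $\hat{v}$-component conjugations) all lie in $\Out^{*}(A_{\Gamma})$, so $\ker P$ is a free-abelian normal subgroup of $\Out^{*}(A_{\Gamma})$. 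It thus suffices to produce an element of infinite order in $\ker P$.

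I would split according to the local structure at $z$. First, suppose two components of $\Gamma-\{z\}$, say $X_1$ and $X_2$, each contain a vertex outside $\lk(z)$. Because any path between $X_1$ and $X_2$ in $\Gamma$ must pass through $z$, and every edge incident to $z$ lies in $\st(z)$, the $\hat{z}$-component containing a chosen vertex $a_1\in X_1\setminus\lk(z)$ is contained in $X_1$ and, since it contains $a_1\notin\st(z)$, is nontrivial. The resulting $\hat{z}$-component conjugation has effective support a proper subset of $\Gamma-\st(z)$ (it misses any vertex of $X_2\cap(\Gamma-\st(z))$), so it is not the inner automorphism $\iota_z$ and is therefore of infinite order as a basis element of the free-abelian group $\ker P$.

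If instead at most one component of $\Gamma-\{z\}$ has a vertex outside $\lk(z)$, then since $n\geq 2$ some simultaneously shared component $C_i$ satisfies $C_i\subseteq\lk(z)$. For this $i$, every $v\in C_i$ has $\lk(v)\subseteq\{z\}\cup C_i\subseteq\st(z)$, so $v\leq z$; moreover, $\bigcap_j\lk(w_j)=\{z\}$ together with the pairwise nonadjacency of the $w_j$'s forces $[z]=\{z\}$. Hence $z$ is the unique maximal vertex in $\lk(w_i)$ and $[w_i]\leq[z]$, making $w_i$ leaf-like and $R_{w_iz}\in\ker P$ an infinite-order leaf transvection.

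In either case $\ker P$ is an infinite free-abelian normal subgroup of $\Out^{*}(A_{\Gamma})$. The ``in particular'' conclusion follows from Proposition \ref{amenable} applied to this subgroup, together with the commensurability invariance of acylindrical hyperbolicity applied to the finite-index inclusion $\Out^{*}(A_{\Gamma})\leq\Out(A_{\Gamma})$. The main obstacle is the case dichotomy: one must check in the first case that the $\hat{z}$-component conjugation is genuinely nontrivial in $\Out$ (this is exactly the properness of its effective support), and in the second case that the singleton property of $[z]$ is what secures the leaf-likeness of $w_i$.
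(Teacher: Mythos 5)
Your argument is essentially the paper's own proof: both produce a nontrivial element of the Charney--Vogtmann kernel $\ker P$ of the projection homomorphism (Theorem \ref{kerofprojhom}), splitting into the case where some simultaneously shared component $C_i$ lies entirely in $\lk(z)$ (so $w_i\le z$ and one gets a leaf transvection) and the case where at least two components of $\Gamma-\{z\}$ meet the complement of $\st(z)$ (so one gets at least two nontrivial $\hat z$-components and hence non-inner $\hat z$-component conjugations). Your dichotomy is phrased as the complement of the paper's but covers the same ground, and your checks that $[z]=\{z\}$ and that each conjugation is non-inner are in fact more careful than the paper's one-line treatment; the only point you gloss is that leaf-likeness of $w_i$ also needs $z$ itself to be \emph{maximal}, which does not follow from $[z]=\{z\}$ alone but is immediate here since any $u$ with $z\le u$ must be adjacent to both $w_1$ and $w_2$, forcing $u\in\lk(w_1)\cap\lk(w_2)=\{z\}$.

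One step does need repair: you pass from non-acylindrical-hyperbolicity of $\Out^*(A_\Gamma)$ to that of $\Out(A_\Gamma)$ by citing ``commensurability invariance of acylindrical hyperbolicity,'' which is not a theorem --- the paper itself emphasizes that this invariance is only conjectural, even for finite-index inclusions. The implication you need goes in the direction that \emph{is} available: if $\Out(A_\Gamma)$ were acylindrically hyperbolic, then its infinite normal subgroup $\Out^*(A_\Gamma)$ would be acylindrically hyperbolic by Proposition \ref{normalityAH}, contradicting Proposition \ref{amenable} applied to the infinite free-abelian normal subgroup you constructed. With that substitution the proof is complete.
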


\begin{proof}
    By Theorem~\ref{kerofprojhom}, it is enough to check that the kernel $P$ of the projection homomorphism is not trivial. Let $\{v\}=\cap_i \lk (w_i)$. If there is a simultaneously shared component $C_i$ whose vertices are adjacent to $v$, then the partial order $w_i\leq v$ generates a leaf transvection. Otherwise, if there is a vertex $v_i$ in each component of $C_i$ that is not adjacent to $v$, then $v_i$ lies in a different $\hat{v}$-component of $\Gamma$. This gives nontrivial $\hat{v}$-component conjugations, which are in $\ker P$.
\end{proof}

\begin{remark}
    Notice that the connectedness of a graph $\Gamma$ in Lemma~\ref{graphwithsingletonintersection} is necessary to apply Theorem~\ref{kerofprojhom}. For example, let $\Gamma$ be the graph shown in Figure~\ref{exampleofdisconnectedgraph}. Then $\{w_1,w_2,w_3\}$ is a maximal SIL-pair system, and $|\cap_i \lk(w_i)|=1$, but $\Gamma$ is not connected. In what follows, we do not assume that $\Gamma$ is connected.
\end{remark}

\begin{figure}[ht]
\centering
\begin{tikzpicture}[scale=0.8]
\draw[black, thick] (0,0.5) -- (-1.5,0.5);
\draw[black, thick] (0,1.5) -- (-1.5,0.5);
\draw[black, thick] (0,-0.5) -- (-1.5,0.5);
\draw[blue] (0,1.5) ellipse (0.3cm and 0.3cm) node[anchor=north east] {$C_1$};
\draw[blue] (0,0.5) ellipse (0.3cm and 0.3cm) node[anchor=north east] {$C_2$};
\draw[blue] (0,-0.5) ellipse (0.3cm and 0.3cm) node[anchor=north east] {$C_3$};
\filldraw[black] (0,1.5) circle (1.5pt) node[anchor=west] {$w_1$};
\filldraw[black] (0,0.5) circle (1.5pt) node[anchor=west] {$w_2$};
\filldraw[black] (0,-0.5) circle (1.5pt) node[anchor=west] {$w_3$};
\draw[red] (1.5,0.5) ellipse (0.3cm and 0.3cm) node[anchor=north west] {$D_1$};
\filldraw[black] (0,1.5) circle (1.5pt);
\filldraw[black] (0,0.5) circle (1.5pt);
\filldraw[black] (0,-0.5) circle (1.5pt);
\filldraw[black] (1.5,0.5) circle (1.5pt);
\filldraw[black] (-1.5,0.5) circle (1.5pt);
\end{tikzpicture}
\caption{An example of a disconnected graph.}
\label{exampleofdisconnectedgraph}
\end{figure}

If there are $m$ additional components, we obtain a normal subgroup isomorphic to the direct product of $m$ copies of a RAAG by collecting all partial conjugations of type (2) as a generating set.

\begin{thm}\label{thm:withaddcpnts}
    Let $\Gamma$ be a graph without isolated vertices. Suppose that $\Gamma$ has a maximal SIL-pair system $\{w_1,\ldots,w_n\}$ and that there are $m$ additional components. Let $P_j$ be a subgroup of $\Out^*(A_\Gamma)$ generated by all partial conjugations of type (2) from Proposition \ref{classofpartialconj}, which are of the form $P_v^{D_j}$ where $v \in C_i$ and $\cap_k \lk(w_k) \subset \lk(v)$. Then all $P_j$'s are isomorphic to the same RAAG and $P_1 \times\cdots\times P_m\lhd \Out^*(A_\Gamma)$.
\end{thm}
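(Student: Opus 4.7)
The plan is to establish the theorem in three stages: first, recognize each $P_j$ as a RAAG; second, identify the $P_j$'s with a common RAAG and obtain the direct product structure; third, verify normality in $\Out^*(A_\Gamma)$.

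\textbf{Stage 1 (RAAG structure of each $P_j$).} A partial conjugation $P_v^{D_j}$ is nontrivial precisely when $D_j$ is a connected component of $\Gamma - \st(v)$. Writing $N(D_j)$ for the set of vertices in $\Gamma \setminus D_j$ adjacent to some vertex of $D_j$, and using that $D_j$ is a component of $\Gamma - \bigcap_i \lk(w_i)$ so that $N(D_j) \subset \bigcap_i \lk(w_i)$, this condition unpacks as $v \notin D_j \cup N(D_j)$ together with $N(D_j) \subset \lk(v)$. Let $S_j$ denote the set of such $v$. Applying the presentation in Theorem \ref{poutpresentation} to the generating set $\{P_v^{D_j}\}_{v \in S_j}$: relations (2), (3), (4) cannot contribute intra-$P_j$ relations, since for $C = D = D_j$ and $v,w \in S_j$ their hypotheses all fail (either disjoint supports or the conjugating vertex lying in the other's support); and relation (5) expresses $P_v^{D_j}$ in terms of partial conjugations at $v$ with supports outside $P_j$, giving no relation internal to $P_j$. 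Only relation (1) remains, so $P_j$ is the RAAG with defining graph the subgraph of $\Gamma$ induced on $S_j$.

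\textbf{Stage 2 (Common isomorphism type and direct product).} To show $P_j \cong P_k$, I would argue that the induced subgraphs on $S_j$ and $S_k$ are isomorphic. The maximality axioms of the SIL-pair system, particularly that no vertex in an additional component forms a SIL-pair with any $w_i$, force the attachment pattern of each $D_j$ to $\bigcap_i \lk(w_i)$ to be uniform, so that ``relabelling $D_j$ and $D_k$'' restricts to an isomorphism between the two induced subgraphs. Commutation $[P_v^{D_j}, P_w^{D_k}] = 1$ for $j \neq k$ then follows from Toinet's relation (2), whose hypotheses $D_j \cap D_k = \emptyset$, $v \notin D_k$, $w \notin D_j$ hold automatically unless one of $v,w$ lies in an additional component; in the exceptional case, relation (5) at that vertex rewrites the offending generator as a product whose commutation with the other is already covered by (2). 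This yields the direct product $P_1 \times \cdots \times P_m$. I expect this to be the main obstacle: while the symmetric role of additional components is structurally natural, the explicit isomorphism $S_j \to S_k$ must be constructed carefully using the non-SIL condition, since a priori the neighbourhoods $N(D_j)$ and $N(D_k)$ could differ inside $\bigcap_i \lk(w_i)$.

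\textbf{Stage 3 (Normality in $\Out^*(A_\Gamma)$).} It suffices to check that conjugation of any $P_v^{D_j}$ by a transvection $R_{ab}$ or a partial conjugation $P_u^E$ lies in $P_1 \times \cdots \times P_m$. For transvections the effect is a controlled change of the conjugating vertex $v$ that still expands as a product of listed generators. For a partial conjugation $P_u^E$, if $E \cap D_j = \emptyset$ for every $j$ the commutator computation is immediate; otherwise the intersection of $E$ with the $D_j$'s is controlled by Toinet's relations (4) and (5) applied to $u$, so the result remains inside the product, establishing normality.
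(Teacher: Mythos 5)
Your outline matches the paper's at a high level, but there are two genuine gaps, and the first is exactly the point you flag as ``the main obstacle'' without resolving it. In Stage 2 you try to build a graph isomorphism between the index sets $S_j$ and $S_k$, worrying that $N(D_j)$ and $N(D_k)$ may differ inside $\bigcap_i\lk(w_i)$. The paper's proof avoids this entirely: it observes that the vertices defining the intended generators of $P_j$ are exactly those $v$ with $\bigcap_i\lk(w_i)\subset\lk(v)$ (necessarily lying in some $C_i$), a condition that makes no reference to $j$. Hence every $P_j$ is generated by $\{P_v^{D_j}: v\in\Lambda\}$ for one and the same vertex set $\Lambda$, and $P_j\cong A_\Lambda$ for all $j$ via $v\mapsto P_v^{D_j}$; no isomorphism between a priori different graphs has to be constructed. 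Your weaker nontriviality condition $N(D_j)\subset\lk(v)$ is what creates the apparent $j$-dependence, and it also lets in generators of types (6) and (12) of Proposition \ref{classofpartialconj} (e.g.\ $P_v^{D_j}$ with $v\in D_k$), which your remark that ``relation (5) rewrites the offending generator'' does not actually dispose of. An appeal to maximality axioms ``forcing uniform attachment'' is not a proof and is not the mechanism at work.

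The second gap is Stage 3, which is an outline rather than an argument. The paper first uses Lemma \ref{lem:commutingrelation} and the classification to pin down exactly which partial conjugations fail to commute with $P_v^{D_j}$ (namely $P_a^C$ with $\bigcap_i\lk(w_i)\subset\lk(a)$ and $v\in C$, besides elements of $P_j$ itself) and then computes the conjugate explicitly, e.g.\ $P_a^C P_v^{D_j}(P_a^C)^{-1}=(P_a^{D_j})^{-1}P_v^{D_j}P_a^{D_j}\in P_j$. For transvections it shows $a\nleq v$ whenever $a\in D_j\cup(\bigcap_i\lk(w_i))$, that $a\leq v$ with $a$ in a shared component gives commutation since $a\notin D_j$, and that $v\leq a$ forces $\bigcap_i\lk(w_i)\subset\lk(a)$, so $a\in\Lambda$ and $R_{va}P_v^{D_j}R_{va}^{-1}=P_a^{D_j}P_v^{D_j}\in P_j$. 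Your phrase ``a controlled change of the conjugating vertex'' gestures at these identities, but the crucial input --- that $v\leq a$ puts $a$ back in $\Lambda$, so the new factor $P_a^{D_j}$ is again a generator of $P_j$ --- is missing. A smaller point, shared with the paper: in Stage 1 the absence of further relations among the $P_v^{D_j}$ in the presentation of Theorem \ref{poutpresentation} only shows that $A_\Lambda\to P_j$ is a well-defined surjection; injectivity requires an additional argument such as a retraction of $\mathrm{PSO}(A_\Gamma)$ onto $A_\Lambda$ killing the remaining generators.
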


\begin{proof}
    Let $\Lambda$ be the induced subgraph whose vertex set consists of the vertices $v$ such that $\Gamma-\st(v)$ has $D_j$ as a connected component. Then a vertex $v$ is contained in $\Lambda$ if and only if $\cap_i \lk(w_i)\subset \lk(v)$. Note that a vertex $v$ with $\cap_i \lk(w_i)\subset \lk(v)$ should be contained in some $C_i$.
    
    The map from $A_\Lambda$ to $P_j$ given by $v\mapsto P_v^{D_j}$ for each $v\in \Lambda$ is an isomorphism between them. Furthermore, elements in $P_j$ and elements in $P_k$ with $j\neq k$ commute. Since $P_j\cap P_k$ is trivial for $j\neq k$, we can see that $\langle P_1,\ldots, P_m\rangle$$=P_1 \times\cdots\times P_m$, so it remains to show that $P_1 \times\cdots\times P_m\lhd \Out^*(A_\Gamma)$.
    
    We first check that the conjugation of a generator of $P_j$ conjugated by a partial conjugation is contained in $P_j$. The link of a vertex in some $D_j$ does not contain $\cap_i \lk(w_i)$ since we choose $\{w_1,\ldots,w_n\}$ as a maximal SIL-pair system. Moreover, if we choose two vertices from $C_i$ and $D_j$ respectively, they cannot be a SIL-pair. By Lemma \ref{lem:commutingrelation} and Proposition \ref{classofpartialconj}, a partial conjugation which does not commute with $P_v^{D_j}$ for $v\in C_i$ is of the form $P_{a}^{C}$ for $\cap_i \lk(w_i)\subset\lk(a)$ and $v\in C$, or $P_{a}^{D_j}\in P_j$ whenever it is defined. For the former case, we have
    \[ P_{a}^{C} P_{v}^{D_j} (P_{a}^{C})^{-1}=(P_{a}^{D_j})^{-1} P_{v}^{D_j} P_{a}^{D_j}\in P_j,\]
    and
    \[ (P_{a}^{C})^{-1} P_{v}^{D_j} (P_{a}^{C})=P_{a}^{D_j} P_{v}^{D_j} (P_{a}^{D_j})^{-1}\in P_j.\]
    
    It remains to consider the conjugations conjugated by transvections. As each vertex in $\cap_i \lk(w_i)$ is adjacent to a vertex in each $C_i$, and each vertex in $D_j$ is adjacent to another vertex in $D_j$, $a\nleq v$ for any vertex $a$ in $D_j\cup (\cap_i \lk(w_i))$ and for any vertex $v\in \Lambda$. Here, we use the assumption that there is no isolated vertex.) Consider a vertex $a\neq v$ in some $C_i$, and suppose that $a\leq v$. Then $[R_{av},P_v^{D_j}]=[L_{av},P_v^{D_j}]=1$ since $a\notin D_j$. For a vertex $a$ such that $v\leq a$, since $\cap_i \lk(w_i)\subset \lk(v)$, $\lk(a)$ also contains $\cap_i \lk(w_i)$, so $a$ belongs to $\Lambda$. Moreover, we have
    \[ R_{va} P_{v}^{D_j} (R_{va})^{-1}=P_{a}^{D_j}P_{v}^{D_j}\in P_j,\]
    \[ (R_{va})^{-1} P_{v}^{D_j} R_{va}=(P_{a}^{D_j})^{-1}P_{v}^{D_j}\in P_j,\]
    \[ L_{va} P_{v}^{D_j} (L_{va})^{-1}=P_{v}^{D_j}P_{a}^{D_j}\in P_j,\]
    and
    \[ (L_{va})^{-1} P_{v}^{D_j} L_{va}=P_{v}^{D_j}(P_{a}^{D_j})^{-1}\in P_j.\]
    Therefore, $P_1 \times\cdots\times P_m$ is a normal subgroup of $\Out^*(A_\Gamma)$.
\end{proof}

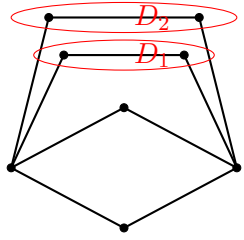
\begin{figure}[ht]
\centering
\begin{tikzpicture}
\draw[black, thick] (0,0.8) -- (-1.5,0);
\draw[black, thick] (0,0.8) -- (1.5,0);
\draw[black, thick] (0,-0.8) -- (-1.5,0);
\draw[black, thick] (0,-0.8) -- (1.5,0);
\draw[black, thick] (-1,2) -- (-1.5,0);
\draw[black, thick] (1,2) -- (1.5,0);
\draw[black, thick] (-1,2) -- (1,2);
\draw[black, thick] (-0.8,1.5) -- (0.8,1.5);
\draw[black, thick] (-0.8,1.5) -- (-1.5,0);
\draw[black, thick] (0.8,1.5) -- (1.5,0);
\draw[red] (0,1.5) ellipse (1.2cm and 0.2cm) node[anchor=west] {$D_1$};
\draw[red] (0,2) ellipse (1.5cm and 0.2cm) node[anchor=west] {$D_2$};
\filldraw[black] (0.8,1.5) circle (1.5pt);
\filldraw[black] (-0.8,1.5) circle (1.5pt);
\filldraw[black] (0,0.8) circle (1.5pt);
\filldraw[black] (0,-0.8) circle (1.5pt);
\filldraw[black] (-1.5,0) circle (1.5pt);
\filldraw[black] (1.5,0) circle (1.5pt);
\filldraw[black] (1,2) circle (1.5pt);
\filldraw[black] (-1,2) circle (1.5pt);
\end{tikzpicture}
\caption{An example of a graph having two additional components.}
\label{twoadditionalcomponents}
\end{figure}

If $\Gamma$ has a maximal SIL-pair system with at least two additional components, by Proposition \ref{normalityAH} and Proposition~\ref{product}, we can conclude that $\Out (A_{\Gamma})$ is not acylindrically hyperbolic. Figure \ref{twoadditionalcomponents} illustrates an example with two additional components. However, if there is at most one additional component, we need a different approach to check acylindrical hyperbolicity. There is a graph with one additional component whose corresponding $\Out^*(A_{\Gamma})$ is acylindrically hyperbolic.

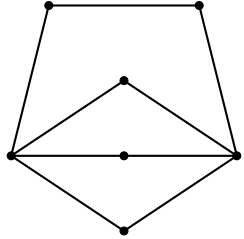
\begin{figure}[ht]
\centering
\begin{tikzpicture}
\draw[black, thick] (0,1) -- (-1.5,0);
\draw[black, thick] (0,1) -- (1.5,0);
\draw[black, thick] (0,0) -- (-1.5,0);
\draw[black, thick] (0,0) -- (1.5,0);
\draw[black, thick] (0,-1) -- (-1.5,0);
\draw[black, thick] (0,-1) -- (1.5,0);
\draw[black, thick] (-1,2) -- (-1.5,0);
\draw[black, thick] (1,2) -- (1.5,0);
\draw[black, thick] (-1,2) -- (1,2);
\filldraw[black] (0,1) circle (1.5pt) node[anchor=north] {$v_1$};
\filldraw[black] (0,0) circle (1.5pt) node[anchor=north] {$v_2$};
\filldraw[black] (0,-1) circle (1.5pt) node[anchor=north] {$v_3$};
\filldraw[black] (-1.5,0) circle (1.5pt) node[anchor=north] {$l_1$};
\filldraw[black] (1.5,0) circle (1.5pt) node[anchor=north] {$l_1$};
\filldraw[black] (1,2) circle (1.5pt);
\filldraw[black] (-1,2) circle (1.5pt);
\end{tikzpicture}
\caption{The graph $\Gamma_1$.}
\end{figure}

\begin{lem}
 $\Out^*(A_{\Gamma_1})$ is acylindrically hyperbolic.
\end{lem}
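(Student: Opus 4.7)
The plan is to use the structural features of $\Gamma_1$ --- specifically the combination of a normal $F_3$ coming from the additional component together with the classical $\Out(F_3)$ coming from the three equivalent central vertices --- to exhibit an acylindrically hyperbolic action. First I would confirm the setup: the three middle vertices $w_1, w_2, w_3$ form a maximal SIL-pair system with $\cap_i \lk(w_i) = \{a, b\}$, the simultaneously shared components are the singletons $C_i = \{w_i\}$, and the single additional component is $D_1 = \{d_1, d_2\}$ (with the edge $d_1 d_2$). Because the $w_i$'s are pairwise non-adjacent and all have link $\{a, b\}$, they form one equivalence class of size three, giving rise to the nontrivial transvections $R_{w_i w_j}$ and $L_{w_i w_j}$. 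By Theorem \ref{thm:withaddcpnts}, the subgroup $P_1 = \langle P_{w_i}^{D_1} : i=1,2,3\rangle$ is a normal subgroup of $\Out^*(A_{\Gamma_1})$ isomorphic to $F_3$, since the induced subgraph on $\{w_1,w_2,w_3\}$ is edgeless.

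Next I would restrict to the maximal equivalence class $[w_1]$. Since $A_{[w_1]}$ is the free group on $w_1,w_2,w_3$, the Charney--Vogtmann machinery from Section \ref{2.1} produces a well-defined restriction homomorphism $\rho\colon \Out^*(A_{\Gamma_1}) \to \Out(F_3)$ under which the transvections $R_{w_iw_j}$ descend to the standard Nielsen generators of $\Aut(F_3)$, while every partial conjugation becomes either an inner automorphism of $F_3$ (for the dominant $P_{w_i}^{\{w_j\}}$) or the identity (for the remaining types in Proposition \ref{classofpartialconj}). In particular $\rho$ is surjective, and $\Out(F_3)$ is acylindrically hyperbolic by Bestvina--Feighn, so it acts non-elementarily acylindrically on its free factor complex $\mathrm{FF}_3$.

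The heart of the argument is to promote this surjection to an acylindrically hyperbolic action of $\Out^*(A_{\Gamma_1})$ itself. Concretely, I would choose a fully irreducible $\alpha \in \Out(F_3)$, lift it to some $\tilde\alpha \in \Out^*(A_{\Gamma_1})$, and show that a suitable power of $\tilde\alpha$ generates a hyperbolically embedded virtually cyclic subgroup of $\Out^*(A_{\Gamma_1})$; Osin's criterion (see \cite{osin2016acylindrically}) then yields acylindrical hyperbolicity. This reduces to a centralizer computation: one must describe the centralizer $C(\tilde\alpha)$ in $\Out^*(A_{\Gamma_1})$ and verify that it is virtually cyclic, using the explicit commutation and product relations in Theorem \ref{poutpresentation} together with the classification in Proposition \ref{classofpartialconj}.

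The main obstacle is exactly this centralizer analysis, because $\ker \rho$ is infinite --- it contains the normal copy of $F_3$ coming from $P_1$ as well as the non-dominant partial conjugations of types (8), (10), (13) --- so the naive pullback of the $\Out(F_3)$-action on $\mathrm{FF}_3$ is not acylindrical on $\Out^*(A_{\Gamma_1})$. To circumvent this, I would combine the $\rho$-pulled-back action on $\mathrm{FF}_3$ with the action of $P_1 \cong F_3$ on its own Bass--Serre tree (where $\Out^*(A_{\Gamma_1})$ acts by conjugation, so the outer action is well-defined on the quotient up to automorphisms of $F_3$), and extract a loxodromic element in the resulting hyperbolic structure whose stabilizer is virtually cyclic. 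Verifying the WPD property in this combined action, via a careful case-by-case application of Lemma \ref{lem:commutingrelation} and Theorem \ref{poutpresentation} to rule out accumulation of kernel elements along the axis of $\tilde\alpha$, is the delicate step.
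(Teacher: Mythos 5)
Your structural reading of $\Gamma_1$ is essentially correct (the maximal SIL-pair system $\{w_1,w_2,w_3\}$ with $|\cap_i\lk(w_i)|=2$, singleton simultaneously shared components, and one additional component $D_1$ carrying a normal subgroup $P_1\cong \mathbb{F}_3$), but the argument does not close. The paper's proof is a short direct identification: the projection homomorphism $P$ is injective for $\Gamma_1$ (there are no leaf transvections and no nontrivial $\hat{v}$-component conjugations, so Theorem \ref{kerofprojhom} gives trivial kernel), the only nontrivial factors are $\Out^0(A_{\lk[l_i]})\cong\Out^0(\mathbb{F}_4)$, and the image is recognized as the image of $\Aut^*(\mathbb{F}_3)$ under the classical embedding $\Aut(\mathbb{F}_3)\to\Out(\mathbb{F}_4)$; hence $\Out^*(A_{\Gamma_1})\cong\Aut^*(\mathbb{F}_3)$, which is acylindrically hyperbolic. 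You instead try to manufacture an acylindrical action from scratch, and the load-bearing steps are exactly the ones you defer: you never construct the ``combined'' hyperbolic space, never compute the centralizer of a lift $\tilde\alpha$, and never verify WPD. Labelling these ``the delicate step'' does not discharge them; in particular it is unclear what it even means to combine the pullback of the free factor complex action with ``the action of $P_1$ on its own Bass--Serre tree,'' since $\Out^*(A_{\Gamma_1})$ does not act on that tree. As written, the heart of the proof is missing.

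Two smaller inaccuracies are worth flagging. First, the dominant partial conjugations $P_{w_i}^{\{w_j\}}$ do \emph{not} restrict to inner automorphisms of $A_{[w_1]}\cong \mathbb{F}_3$: they restrict to partial conjugations of $\mathbb{F}_3$, which are nontrivial in $\Out(\mathbb{F}_3)$ (they happen to be products of transvections, so the image of your $\rho$ is still $\Out^*(\mathbb{F}_3)$, a finite index subgroup rather than all of $\Out(\mathbb{F}_3)$). Second, the Charney--Vogtmann restriction machinery naturally lands in $\Out^0(A_{\st[w_1]})$ and then $\Out^0(A_{\lk[w_1]})$, not in $\Out(A_{[w_1]})$, so the well-definedness of your $\rho$ needs justification. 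The cleanest repair of your approach is in fact the paper's: note that $\ker P$ is trivial, so $\Out^*(A_{\Gamma_1})$ embeds into $\Out^0(\mathbb{F}_4)$ with image canonically isomorphic to $\Aut^*(\mathbb{F}_3)$, and then quote the acylindrical hyperbolicity of that group.
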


\begin{proof}
We claim that $\Out^*(A_{\Gamma_1})$ is isomorphic to an acylindrically hyperbolic group $\Aut^*(\mathbb{F}_3)$. Let $v_1$, $v_2$, and $v_3$ be the equivalent vertices in $\Gamma_1$ and $l_1$, $l_2$ be vertices in $\lk (v_1)=\lk (v_2)=\lk (v_3)$. Then $\Out^*(A_{\Gamma_1})$ is generated by transvections defined by the equivalent relations among $v_1$, $v_2$, and $v_3$ since other nontrivial partial conjugations are generated by these transvections. Consider the restriction of the projection homomorphism $P:\Out^* (A_{\Gamma_1}) \to \prod \Out (A_{\lk [v]})$. By Theorem~\ref{kerofprojhom}, $P$ is injective. The images of the projection maps $P_{[v]}$ are trivial other than the cases $v=l_1$ and $v=l_2$. Since $\Out^0(A_{\lk [l_1]})\cong \Out^0(A_{\lk [l_2]})\cong \Out^0(\mathbb{F}_4)$ and the corresponding images of projection maps are isomorphic, we only need to consider the projection map $P_{[l_1]}:\Out^* (A_{\Gamma_1}) \to \Out (A_{\lk [l_1]})$ to describe the image of $P$. Comparing to the restriction of the map $\Aut(\mathbb{F}_n)\to \Out(\mathbb{F}_{n+1})$ to $\Aut^*(\mathbb{F}_n)$, when $n=3$, the images are canonically isomorphic, so we conclude that $\Out^*(A_{\Gamma_1})\cong\Aut^*(\mathbb{F}_3)$.
\end{proof}

\begin{remark}
    In \cite{sale2024virtual}, Sale found an equivalent condition for $\Out(A_{\Gamma})$ to satisfy property (T) when the defining graph has no SIL. Namely, if $\Gamma$ has no SIL, then $\Out(A_\Gamma)$ satisfies property (T) if and only if for any two distinct vertices $u,v$ such that $u\leq v$ there is a vertex $w$ such that $u\leq w\leq v$, and $|X|>1$ for each equivalence class $X$ admitting an $X-$component $C$ for which $(X,C)$ is principal. While $\Gamma_1$ satisfies the latter condition of the previous statement, $\Out(A_{\Gamma_1})$ does not satisfy property (T). This means that if we add the condition of a SIL-pair, Sale's theorem does not hold in general. We don't know if $\Out(A_{\Gamma})$ can be classified as having property (T) with a SIL-pair.
\end{remark}

In \cite{wiedmer2024right}, Wiedmer shows that for any graph $\Lambda$, there is a graph $\Gamma$ without transvections such that $A_{\Lambda}\cong \Out^*(A_{\Gamma})$. As a consequence, since the acylindrical hyperbolicity of the RAAG $A_\Lambda$ is completely understood, one can readily determine the acylindrical hyperbolicity of $\Out^*(A_\Gamma)$. Each vertex of the graphs constructed in Wiedmer's paper generates one or two connected components of the complement of its star. Given $\Gamma$ constructed by Wiedmer's theorem, let $x$ and $y$ be vertices such that both $\Gamma-\st(x)$ and $\Gamma-\st(y)$ have two connected components. Then $(x,y)$ is a SIL-pair. Moreover, $x$ and $y$ lie in the same connected component in $\Gamma-(\lk(x)\cap\lk(y))$, therefore, we cannot apply Lemma \ref{existenceofmaxSILpairsystem}. Thus, the graphs generated by Wiedmer's construction provide illuminating examples where no maximal SIL-pair system exists. We mention the following to describe the graphs without maximal SIL-pair system.

\begin{lem}
    Suppose for each SIL-pair $(v,w)$ in $\Gamma$ there is a path between $v$ and $w$ in $\Gamma - (\lk(v)\cap \lk(w))$. Fix a SIL-pair $(w_1,w_2)$ where $|\lk(w_1)\cap \lk(w_2)|$ is minimal. Then $(x,w_i)$ is not a SIL-pair for any vertex $x$ in components of $\Gamma - (\lk(w_1)\cap \lk(w_2))$ which do not contain $w_1$ and $w_2$.
\end{lem}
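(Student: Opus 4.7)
My plan is to argue by contradiction, combining Lemma \ref{minimality} with the minimality of $|\lk(w_1)\cap \lk(w_2)|$ among SIL-pairs and the standing hypothesis that every SIL-pair $(v,w)$ admits a path between $v$ and $w$ in $\Gamma-(\lk(v)\cap\lk(w))$.

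Suppose for contradiction that $(x,w_i)$ is a SIL-pair for some $i\in\{1,2\}$; by symmetry take $i=1$. By the choice of $x$, it lies in a component of $\Gamma-(\lk(w_1)\cap\lk(w_2))$ that contains neither $w_1$ nor $w_2$, so $x$ belongs to a shared component of the pair $(w_1,w_2)$ in the sense of Lemma \ref{lem:SharedComponents}. Applying Lemma \ref{minimality} to $(w_1,w_2)$ at the vertex $x$ yields $\lk(w_1)\cap \lk(x)\subset\lk(w_2)$; since $\lk(w_1)\cap\lk(x)\subset\lk(w_1)$ is trivial, we obtain the key inclusion
\[
\lk(w_1)\cap\lk(x)\;\subset\;\lk(w_1)\cap\lk(w_2).
\]

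Next I invoke the minimality hypothesis: since $(x,w_1)$ is itself a SIL-pair, $|\lk(w_1)\cap\lk(x)|\geq |\lk(w_1)\cap\lk(w_2)|$. Together with the inclusion above and the finiteness of the sets involved, this forces $\lk(w_1)\cap\lk(x)=\lk(w_1)\cap\lk(w_2)$. Now apply the standing hypothesis to the SIL-pair $(x,w_1)$: there must exist a path from $x$ to $w_1$ in $\Gamma-(\lk(x)\cap\lk(w_1))=\Gamma-(\lk(w_1)\cap\lk(w_2))$. But by construction $x$ and $w_1$ lie in distinct components of the latter graph, a contradiction. The case $i=2$ is handled symmetrically, this time using $\lk(w_2)\cap\lk(x)\subset\lk(w_1)$ from Lemma \ref{minimality}.

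I do not anticipate a genuine obstacle. The entire mechanism is that Lemma \ref{minimality} confines any candidate intersection $\lk(x)\cap \lk(w_i)$ inside the already-minimum set $\lk(w_1)\cap\lk(w_2)$, after which the cardinality inequality collapses the inclusion to equality and the path hypothesis produces the required contradiction with the component structure of $\Gamma-(\lk(w_1)\cap\lk(w_2))$. The only point that requires mild care is verifying that "shared component of $(w_1,w_2)$" in the sense needed by Lemma \ref{minimality} coincides with "component of $\Gamma-(\lk(w_1)\cap\lk(w_2))$ not containing $w_1$ or $w_2$," which is exactly Lemma \ref{lem:SharedComponents}.
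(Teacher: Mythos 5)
Your proof is correct and follows essentially the same route as the paper: apply Lemma \ref{minimality} to confine $\lk(x)\cap\lk(w_1)$ inside $\lk(w_1)\cap\lk(w_2)$, use the minimality of $|\lk(w_1)\cap\lk(w_2)|$ to upgrade the inclusion to an equality, and then contradict the path hypothesis for the SIL-pair $(x,w_1)$ using the component structure of $\Gamma-(\lk(w_1)\cap\lk(w_2))$. The only cosmetic difference is that the paper records both equalities $\lk(w_1)\cap\lk(w_2)=\lk(x)\cap\lk(w_1)=\lk(x)\cap\lk(w_2)$ at once, whereas you treat $i=1$ and $i=2$ symmetrically.
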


\begin{proof}
    Suppose there is a vertex $x$ in a component of $\Gamma - (\lk(w_1)\cap \lk(w_2))$ which do not contain $w_1$ and $w_2$ such that $(x,w_1)$ is a SIL-pair. Then by Lemma \ref{minimality} and the minimality of $|\lk(w_1)\cap \lk(w_2)|$, we obtain that
    \[\lk(w_1)\cap \lk(w_2)=\lk(x)\cap \lk(w_1)=\lk(x)\cap \lk(w_2).\]
    (By these equalities, we deduce that $(x,w_1)$ is a SIL-pair if and only if $(x,w_2)$ is a SIL-pair.) However, it is not possible to get a path between $x$ and $w_1$ in $\Gamma-(\lk(x)\cap \lk(w_1))=\Gamma-(\lk(w_1)\cap \lk(w_2))$, since we assume that $x$ and $w_1$ are in different components of $\Gamma - (\lk(w_1)\cap \lk(w_2))$.
\end{proof}

\begin{lem}
    Suppose for each SIL-pair $(v,w)$ in $\Gamma$ there is a path between $v$ and $w$ in $\Gamma - (\lk(v)\cap \lk(w))$. Fix a SIL-pair $(w_1,w_2)$ where $|\lk(w_1)\cap \lk(w_2)|$ is minimal. If there is a vertex $w_3$ satisfying $\lk(w_1)\cap\lk(w_2)\subset\lk(w_3)$ where the component of $\Gamma - (\lk(w_1)\cap \lk(w_2))$ containing $w_3$ does not contain $w_1$ and $w_2$, then $\Gamma - (\lk(w_1)\cap \lk(w_2))$ has only two connected components.
\end{lem}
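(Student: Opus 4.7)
The plan is to argue by contradiction: assume $\Gamma - L$ has at least three connected components, where $L := \lk(w_1) \cap \lk(w_2)$, and derive a contradiction from the standing hypothesis together with the earlier lemmas. By the hypothesis, $w_1$ and $w_2$ lie in the same component $A$ of $\Gamma - L$. The vertex $w_3$ lies in some other component $B$, and the contradiction hypothesis supplies yet a third component $D$; since $A$ is the unique component containing $w_1$ or $w_2$, both $B$ and $D$ are shared components of $(w_1,w_2)$ in the sense of Lemma \ref{lem:SharedComponents}.

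First I would pin down that $\lk(w_1) \cap \lk(w_3) = L$. The inclusion $L \subset \lk(w_1) \cap \lk(w_3)$ follows from the standing hypothesis $L \subset \lk(w_3)$ together with $L \subset \lk(w_1)$. For the reverse inclusion, apply Lemma \ref{minimality} to the SIL-pair $(w_1,w_2)$ and the vertex $w_3$ in the shared component $B$: this gives $\lk(w_1) \cap \lk(w_3) \subset \lk(w_2)$, hence $\lk(w_1) \cap \lk(w_3) \subset \lk(w_1) \cap \lk(w_2) = L$.

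Next I would show $w_1$ and $w_3$ must in fact be adjacent. If they were not, then $D$ would be a connected component of $\Gamma - (\lk(w_1) \cap \lk(w_3)) = \Gamma - L$ containing neither $w_1$ nor $w_3$, so $(w_1,w_3)$ would itself be a SIL-pair. The standing hypothesis would then produce a path between $w_1$ and $w_3$ inside $\Gamma - L$, contradicting the fact that $w_1 \in A$ and $w_3 \in B$ lie in distinct components of $\Gamma - L$. Thus $w_3 \in \lk(w_1)$, and the symmetric argument (swapping $w_1 \leftrightarrow w_2$) gives $w_3 \in \lk(w_2)$. But then $w_3 \in \lk(w_1) \cap \lk(w_2) = L$, contradicting $w_3 \in B \subset \Gamma - L$. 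This rules out the existence of $D$, so $\Gamma - L$ has exactly two components.

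The only point requiring care is checking that Lemma \ref{minimality} is legitimately available for $w_3$, i.e. that $B$ is indeed a shared component of $(w_1,w_2)$; this is immediate once one observes that $A$ is the \emph{only} component of $\Gamma - L$ containing $w_1$ or $w_2$, so any other component is shared. Beyond that, the proof is a direct interplay between the SIL-definition and the standing ``path always exists'' hypothesis.
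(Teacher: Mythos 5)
Your proposal is correct and follows essentially the same route as the paper: a third component of $\Gamma - (\lk(w_1)\cap\lk(w_2))$ would make $(w_1,w_3)$ a SIL-pair with no path between $w_1$ and $w_3$ in $\Gamma-(\lk(w_1)\cap\lk(w_3))$, contradicting the standing hypothesis. The paper states this in one sentence; you supply the supporting details (the identity $\lk(w_1)\cap\lk(w_3)=\lk(w_1)\cap\lk(w_2)$ via Lemma \ref{minimality} and the non-adjacency of $w_1$ and $w_3$), which is exactly what is needed to justify it.
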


\begin{proof}
    If there are at least three connected components, then $(w_1,w_3)$ would be a SIL-pair where $w_1$ and $w_3$ are not joined by a path in $\Gamma - (\lk(w_1)\cap \lk(w_3))$.
\end{proof}

\section{Graphs without additional components} \label{sec:noadditionalcomponents}

\subsection{$\mathrm{PSO}(A_{\Gamma})$ and its acylindrical hyperbolicity}

From now on, we focus on graphs $\Gamma$ without additional components and the pure symmetric outer automorphism groups $\mathrm{PSO}(A_{\Gamma})$. Note that in the existing literature, the notions of a partial conjugation being ``dominating'' or ``subordinate'' are based on reference to a particular SIL-pair. In our context, we classify partial conjugations globally with respect to the maximal SIL-pair system. Specifically, a dominant partial conjugation (type (1) in Proposition~\ref{classofpartialconj}) has its support as a whole simultaneously shared component $C_{i'}$, thus dominating it with respect to the maximal system. A subordinate partial conjugation (type (3) in Proposition~\ref{classofpartialconj}) has its support properly contained inside a simultaneously shared component $C_i$, thus making it subordinate. These definitions are naturally consistent with the classical ones when restricted to the relevant SIL-pairs involving the vertices $w_j$.

\begin{thm} \label{thm:structureofPSO}
    Suppose that a graph $\Gamma$ has a maximal SIL-pair system $\{w_1,\ldots,w_n\}$ without additional components. Let $N_1$ be the subgroup of $\Out(A_{\Gamma})$ generated by all dominant partial conjugations (type (1) in Proposition \ref{classofpartialconj}) except for the partial conjugations of the form $P_{v_1}^{C_2}$ for $v_1\in C_1$ and of the form $P_{v_i}^{C_1}$ for $v_i\in C_i$ with $i\neq 1$. Let $N_2$ be the subgroup of $\Out(A_{\Gamma})$ generated by the other possible partial conjugations described in Proposition~\ref{classofpartialconj} (which are types (4), (5), and (7)-(9), assuming there are no nontrivial subordinate partial conjugations). If there are no nontrivial subordinate partial conjugations (type (3)), then $\mathrm{PSO}(A_{\Gamma})=N_1\times N_2$. In particular, if there are no nontrivial subordinate partial conjugations and no transvections, then $\Out(A_{\Gamma})$ is not acylindrically hyperbolic.
\end{thm}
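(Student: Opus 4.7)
The plan is to prove the direct product decomposition $\mathrm{PSO}(A_\Gamma) = N_1 \times N_2$ in three steps -- joint generation, commutation, and trivial intersection -- and then deduce the acylindrical hyperbolicity consequence.

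For joint generation, under our hypotheses (no additional components, no nontrivial subordinate PCs) the classification in Proposition \ref{classofpartialconj} reduces to types (1), (4), (5), (7), (8), (9). Every non-dominant generator lies in $N_2$ and every dominant generator lies in $N_1$ except for the excluded families $P_v^{C_2}$ ($v \in C_1$) and $P_v^{C_1}$ ($v \in C_i$, $i \neq 1$). Applying relation (5) of Theorem \ref{poutpresentation} at such a source $v \in C_i$ gives $\prod_{k \neq i} P_v^{C_k} \cdot \prod_{C \subset C_i} P_v^C = 1$, which expresses the excluded PC as a product of kept dominant PCs in $N_1$ and non-dominant PCs of types (4) and (5) in $N_2$. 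Hence $\mathrm{PSO}(A_\Gamma) = \langle N_1, N_2 \rangle$.

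For commutation, I would take generators $P_u^{C_{i'}} \in N_1$ (so $u \in C_i$ with $\cap_k \lk(w_k) \subset \lk(u)$ and $i' \neq i$) and $P_v^D \in N_2$, and verify $[P_u^{C_{i'}}, P_v^D] = 1$ using the relations in Theorem \ref{poutpresentation}. If $v \in \cap_k \lk(w_k)$ (types (8), (9)), then $v \in \lk(u)$ and relation (1) applies. If $P_v^D$ is of type (4), its defining hypothesis rules out $u \in D$, and $C_{i'} \cap D = \emptyset$ with $v \notin C_{i'}$ gives relation (2). If $P_v^D$ is of type (5), the condition $\cap_k \lk(w_k) \not\subset \lk(v)$ provides $z \in \cap_k \lk(w_k) \setminus \lk(v) \subset \lk(u)$, placing $u$ in the component of $\Gamma - \st(v)$ containing $z$ and all other $C_k$, which is disjoint from $D \subset C_j$, so $u \notin D$ and again relation (2) applies. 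If $P_v^D$ is of type (7), the same argument gives $u \in D$; when $i' \neq j$ one has $\{u\} \cup C_{i'} \subset D$ and relation (3) applies, while when $i' = j$ we first rewrite $P_u^{C_j}$ via relation (5) at $u$ as a product of PCs whose supports all lie in $D$, each commuting with $P_v^D$ by relation (3).

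For trivial intersection, the defining relations of $\mathrm{PSO}(A_\Gamma)$, after using relation (5) to eliminate each excluded dominant PC as above, separate into relations among $N_1$-generators, among $N_2$-generators, and cross-commutations verified in the preceding step; this produces a surjective homomorphism $N_1 \times N_2 \to \mathrm{PSO}(A_\Gamma)$ whose retraction onto each factor yields $N_1 \cap N_2 = 1$, completing the direct product structure. Finally, if $\Gamma$ has no transvections then $\mathrm{PSO}(A_\Gamma) = \Out^*(A_\Gamma)$ is a finite index subgroup of $\Out(A_\Gamma)$; since each of $N_1$, $N_2$ contains a partial conjugation of infinite order, both factors are infinite, so Proposition \ref{product} prevents $\mathrm{PSO}(A_\Gamma)$ from being acylindrically hyperbolic and, by commensurability invariance, the same holds for $\Out(A_\Gamma)$. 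I expect the commutation step to be the most delicate part, because it requires a case analysis across the classification in Proposition \ref{classofpartialconj} together with a nontrivial use of relation (5) to resolve the type (7) case when $i' = j$.
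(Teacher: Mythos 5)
Your overall strategy---joint generation via relation (5) of Theorem \ref{poutpresentation}, commutation of $N_1$- and $N_2$-generators via the presentation relations, trivial intersection via a retraction, and then the direct-product obstruction---is the same as the paper's, and most of the case analysis matches what the paper does. Two points need attention. The first is a patchable omission in the commutation step: for a generator $P_v^D$ of type (4) or (5) you assert $C_{i'}\cap D=\emptyset$ and $v\notin C_{i'}$ and invoke relation (2), but both assertions fail precisely when $v$ lies in the target component $C_{i'}$ of the dominant partial conjugation $P_u^{C_{i'}}$ (then $D\subset C_{i'}$ and $v\in C_{i'}$). In that subcase relation (3) applies instead, since $\{v\}\cup D\subset C_{i'}$; the paper's proof covers this by choosing relation (2) or (3) ``depending on whether $C$ contains $w$''. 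This does not threaten the argument, but your case split is incomplete as written.

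The second point is a genuine gap in the final deduction: you pass from non-acylindrical-hyperbolicity of the finite-index subgroup $\mathrm{PSO}(A_\Gamma)=\Out^*(A_\Gamma)$ to that of $\Out(A_\Gamma)$ ``by commensurability invariance''. Acylindrical hyperbolicity is not known to be a commensurability (or even quasi-isometry) invariant---the paper itself emphasizes that it is open whether acylindrical hyperbolicity transfers even between a group and a finite-index subgroup. The implication you need is the one that is actually available: $\Out^*(A_\Gamma)$ is an infinite \emph{normal} subgroup of $\Out(A_\Gamma)$, so if $\Out(A_\Gamma)$ were acylindrically hyperbolic then Proposition \ref{normalityAH} would force $\Out^*(A_\Gamma)$ to be acylindrically hyperbolic, contradicting Proposition \ref{product} applied to $N_1\times N_2$. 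Replace the commensurability appeal with this normality argument. (You should also justify that both factors are infinite; note that for $n=2$ every dominant partial conjugation is excluded from the generating set of $N_1$, so that case requires separate comment, as the paper's closing remark acknowledges.)
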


\begin{proof}
    Since there are no additional components, only partial conjugations of types (1), (3)-(5), and (7)-(9) can exist (types (2), (6), and (10) cannot exist as there is no $D_j$). Furthermore, under the assumption that there are no nontrivial subordinate partial conjugations, type (3) does not exist either. Thus, the only types of partial conjugations present in $\mathrm{PSO}(A_\Gamma)$ are (1), (4), (5), and (7)-(9). Type (1) generators form $N_1$, while the remaining types form $N_2$.
    
    We first show that dominant partial conjugations commute with the generators of $N_2$. Since dominant partial conjugations commute with partial conjugations defined by vertices in $\cap_i \lk (w_i)$ (types (8)-(9)), we only need to check that dominant partial conjugations in $N_1$ commute with partial conjugations of type (4) and (5). Note that the partial conjugations of type (7) equal to the product of the inverses of partial conjugations of type (5).
    
    Let $P_v^C$ be a dominant partial conjugation and $P_w^{C'}$ be a partial conjugation of type (4) or (5). Then $C'$ does not contain $v$ by the classification. Suppose $v$ and $w$ are not adjacent. If not, then $[P_v^C, P_w^{C'}]=1$ by the first relation in Theorem \ref{poutpresentation}. If $v$ and $w$ are contained in different simultaneously shared components, then $[P_v^C, P_w^{C'}]=1$ by the second or third relation in Theorem \ref{poutpresentation} depending on whether $C$ contains $w$ or not. Otherwise $C$ and $C'$ do not contain both $v$ and $w$. Since $P_w^{C'}$ is not a dominant partial conjugation, $C$ is not equal to $C'$. This implies that $C'$ must be another shared component or a subordinate component, so $[P_v^C, P_w^{C'}]=1$ by Lemma~\ref{lem:SharedComponents}.
    
    Some of the generators of the generating set of $\langle N_1, N_2\rangle$ are missing from $\mathrm{PSO}(A_{\Gamma})$. The missing generators are those that were removed in the assumption of the theorem. However, these generators can be identified by the product of the inverses of other partial conjugations defined by the same vertices, so we have $\mathrm{PSO}(A_{\Gamma})=\langle N_1, N_2\rangle$.
    
    To prove that $\langle N_1, N_2\rangle$ represents an internal direct product, consider the map $\mathrm{PSO}(A_{\Gamma})\to N_1$ that sends the generators of $N_1$ to themselves and the generators of $N_2$ to the identity. This map is a well-defined homomorphism because we can treat the presentation of $\mathrm{PSO}(A_\Gamma)$ as having the relation (5) from Theorem \ref{poutpresentation} removed by using it to eliminate the excluded generators of $N_1$. With this reduced presentation, all remaining relations that mix generators of $N_1$ and $N_2$ are solely commutation relations, which are trivially preserved when the $N_2$ generators are evaluated as the identity. The relations involving only $N_1$ generators are preserved identically, and those involving only $N_2$ generators are sent to the trivial relation. The existence of this well-defined homomorphism shows the equality $N_1\cap N_2=\{1\}$. Therefore, $\mathrm{PSO}(A_{\Gamma})=N_1\times N_2$.
\end{proof}

If there are no partial conjugations of type (4) and (5) in the classification theorem, we obtain an interesting group structure of $\mathrm{PSO}(A_{\Gamma})$. We say that a vertex $v\in \Gamma$ \textit{defines only subordinate partial conjugations} if there is no non-trivial partial conjugation in $\Out(A_\Gamma)$ of the form $P_v^C$ of type (4) or (5).

\begin{thm}\label{semidirect}
    Let $\Gamma$ be a graph with a maximal SIL-pair system $\{w_1,\ldots,w_n\}$ and no additional components. Suppose that every vertex in each simultaneously shared component either has the star whose complement is connected, or defines only subordinate partial conjugations, all of whose supports contain a vertex that also defines subordinate partial conjugations. If $n\geq 3$, then
    \[\mathrm{PSO}(A_{\Gamma})=G\times ((H_n\rtimes (H_{n-1}\rtimes(\cdots\rtimes(H_4\rtimes H_3)\cdots)))\rtimes K) \]
    where $G$ is a subgroup generated by partial conjugations defined by vertices in $\cap_i \lk(w_i)$, $H_k$ is a subgroup generated by the union of $\{P_{v_j}^{C_k}:v_j\in C_j, \cap_i \lk(w_i)\subset\lk(v_j), 1\leq j\leq k\}$ and $\{P_{v_k}^{C_j}:v_k\in C_k, \cap_i \lk(w_i)\subset\lk(v_k), 2\leq j\leq k\}$,
    and $K$ is a subgroup generated by all subordinate partial conjugations.
\end{thm}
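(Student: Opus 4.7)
The plan is to use the classification of partial conjugations in Proposition~\ref{classofpartialconj} to reduce to a small generating set for $\mathrm{PSO}(A_\Gamma)$, then verify the claimed decomposition $G \times ((H_n \rtimes \cdots \rtimes H_3) \rtimes K)$ using the commutation relations in Theorem~\ref{poutpresentation}.

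First I would show that under the hypothesis only three kinds of generators survive: those of $G$ (types (8) and (9) in the classification), dominant partial conjugations (type (1)), and subordinate partial conjugations (type (3)). The no-additional-components hypothesis rules out types (2), (6), (10)-(13). For any $v \in C_i$ with $\cap_i \lk(w_i) \not\subset \lk(v)$, the hypothesis forces $\Gamma - \st(v)$ to be connected (since such a $v$ cannot define a subordinate partial conjugation), eliminating types (5) and (7); reading the hypothesis as requiring every nondominant component of $\Gamma - \st(v)$ to be subordinate eliminates type (4) as well. Finally, relation (5) of Theorem~\ref{poutpresentation} at each source vertex expresses the dominants that are intentionally omitted from the $H_k$'s (namely $P_{v_1}^{C_2}$ for $v_1 \in C_1$ and $P_{v_i}^{C_1}$ for $v_i \in C_i$ with $i \geq 2$) as products of the remaining dominants and subordinates at the same vertex, so $G \cup \bigcup_{k=3}^n H_k \cup K$ generates $\mathrm{PSO}(A_\Gamma)$.

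Next I would peel off $G$ as a direct factor. A case analysis using Lemma~\ref{lem:commutingrelation} together with relations (1)-(3) shows that every $P_v^C$ with $v \in \cap_i \lk(w_i)$ commutes with every dominant and subordinate generator; the retraction $\mathrm{PSO}(A_\Gamma) \to G$ sending dominants and subordinates to the identity then gives $\mathrm{PSO}(A_\Gamma) = G \times M$ for $M := \langle \text{dominants}, \text{subordinates}\rangle$. To organise $M$, I would group the dominants according to the maximum index $k$ among the source and target, so that $H_k$ collects precisely the dominants with maximum index $k$ (excluding the two redundant families already eliminated). For two dominants $P_{v_j}^{C_k}$ and $P_{v_{j'}}^{C_{k'}}$ with $v_j \in C_j$ and $v_{j'} \in C_{j'}$ ($j \neq j'$), the pair $(v_j, v_{j'})$ is a SIL-pair and Lemma~\ref{lem:commutingrelation} shows they commute unless one of $C_k, C_{k'}$ is the dominating component for $(v_j, v_{j'})$, i.e. unless $\{k,k'\} \cap \{j,j'\} \neq \emptyset$; in the non-commuting case relation (4) provides an explicit twisting formula showing that conjugating an $H_k$-generator by an $H_{k'}$-generator ($k' < k$) produces a product of $H_k$-generators. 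A parallel argument using relations (2)-(4) shows that $K$ normalises $\langle H_3, \ldots, H_n\rangle$, since conjugating a dominant $P_a^{C_l}$ by a subordinate $P_v^C$ either commutes or returns a product of dominants sourced from the same vertices.

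The main obstacle is upgrading these normality statements to an honest iterated semidirect product, i.e. proving that $H_k \cap \langle H_{k-1},\ldots, H_3, K\rangle = \{1\}$ for each $k$. I plan to establish this by constructing, for each fixed $k$, a retraction of $M$ onto $H_k$ that kills every generator outside $H_k$ and then verifying compatibility with all five relations of Theorem~\ref{poutpresentation}. The delicate step is relation (5) at each source vertex $v_j$, which couples the dominants $P_{v_j}^{C_2}, \ldots, P_{v_j}^{C_n}$ with the subordinate partial conjugations defined by $v_j$; the retraction must interact consistently with the rewriting of the excluded generators $P_{v_1}^{C_2}$ and $P_{v_i}^{C_1}$ from the first step, so the bookkeeping of how subordinate contributions are distributed across the $H_k$ and $K$ layers is where the real combinatorial difficulty lies. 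Once these retractions are in place, the triviality of the intersections follows and the semidirect decomposition of $M$, hence of $\mathrm{PSO}(A_\Gamma)$, is complete.
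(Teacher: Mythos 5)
Your steps (i)--(iii) track the paper's argument closely: the reduction of the generating set via Proposition~\ref{classofpartialconj}, the elimination of the redundant dominants $P_{v_1}^{C_2}$ and $P_{v_i}^{C_1}$ through relation (5), the splitting off of $G$ by commutation plus a retraction, and the normality computations for $K$ and for the $H_k$'s using relation (4) of Theorem~\ref{poutpresentation} are all exactly what the paper does. The problem is the final step, which you correctly identify as the crux but then propose to handle in a way that cannot work. You want, for each $k$, a retraction of $M$ onto $H_k$ killing every generator outside $H_k$. No such homomorphism exists. Take a subordinate partial conjugation $P_v^C$ with $v,w\in C_j$, $w\in C$, and a dominant target $C_k$ with $k\neq j$: relation (4) gives $[P_v^C P_v^{C_k},\,P_w^{C_k}]=1$, so your map would force $[P_v^{C_k},P_w^{C_k}]=1$ in the image. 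But $(v,w)$ is a SIL-pair with $C_k$ as an identical shared component, so Lemma~\ref{lem:commutingrelation} says $[P_v^{C_k},P_w^{C_k}]\neq 1$ in $\mathrm{PSO}(A_\Gamma)$, hence in $H_k$. (The hypothesis of the theorem guarantees such pairs $v,w$ exist.) Conceptually, you are trying to retract a semidirect product onto its \emph{normal} factor while killing the acting factor, which is impossible precisely when the action is nontrivial --- and the nontriviality of that action is the whole point of the statement.

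The fix, which is what the paper does, is to retract in the other direction: after using a Tietze transformation to delete relation (5) together with the redundant dominant generators (so that only the commutator-type relations (1)--(4) remain to be checked), one defines maps $\mathrm{PSO}(A_\Gamma)\to H$, $\mathrm{PSO}(A_\Gamma)\to K$, and $\mathrm{PSO}(A_\Gamma)\to\langle K,H_{l-1}'\rangle$ sending the standard generators of the target to themselves and all other generators to the identity; these kill the normal factor at each stage and are compatible with relations (1)--(4), yielding $G\cap H=1$, $H'\cap K=1$, and $H_l\cap H_{l-1}'=1$. So your outline needs both the up-front Tietze elimination of relation (5) (rather than deferring it) and a reversal of the direction of every retraction in the intersection-triviality step.
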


\begin{proof}
    By the assumptions and the classification of partial conjugations, the standard generating set of $\mathrm{PSO}(A_{\Gamma})$ consists of all subordinate partial conjugations, dominant partial conjugations, and the partial conjugations defined by vertices in $\cap_i \lk(w_i)$. Let $H$ be a subgroup generated by all subordinate and dominant partial conjugations except for dominant partial conjugations of the form $P_{v}^{C_2}$ for $v\in C_1$ or $P_{v}^{C_1}$ for $v\in C_j$ with $j\neq 1$. These partial conjugations are equal to products of the inverse of other dominant partial conjugations and subordinate partial conjugations. Indeed, using Tietze transformations, one can remove the relation (5) in Theorem \ref{poutpresentation}. The union of the generating sets of $G$ and $H$ generates $\Out^*(A_{\Gamma})$ by the choice of generating sets although we remove generators of the form $P_{v}^{C_2}$ for $v\in C_1$ or $P_{v}^{C_1}$ for $v\in C_j$ with $j\neq 1$. Moreover, for each vertex $v$ defining the generators of $H$, we know that $\cap_i \lk(w_i)\subset\lk(v)$ by the definition of simultaneously partial conjugation and subordinate partial conjugation. This implies that the generators of $G$ commute with the generators of $H$.
    
    To check that $G\cap H$ is trivial, consider a map $\mathrm{PSO}(A_{\Gamma})\to H$ sending standard generators of $H$ to themselves and standard generators of $G$ to the identity. Since this map sends all relations in Theorem \ref{poutpresentation} to the identity, it is a well-defined homomorphism. While every nontrivial element in $H$ maps to itself, every nontrivial element in $G$ maps to the trivial element. This implies that $G\cap H$ is trivial. Therefore, we deduce that $\mathrm{PSO}(A_{\Gamma})=G\times H$.
    
    Let $H'$ be a subgroup generated by all dominant partial conjugations except for dominant partial conjugations of the form $P_{v}^{C_2}$ for $v\in C_1$ and of the form $P_{v}^{C_1}$ for $v\in C_j$ with $j\neq 1$. It remains to show that $H=H'\rtimes K$ and $H'=H_n\rtimes (H_{n-1}\rtimes(\cdots\rtimes(H_4\rtimes H_3)\cdots))$. We need to check that $H'$ is a normal subgroup of $H$.
    
    \begin{lem}\label{normality1}
        $H'$ is a normal subgroup of $H$.
    \end{lem}
    
    \begin{proof}[Proof of Lemma \ref{normality1}]
        Let $v\in C_i$ be a vertex defining dominant partial conjugations and let $P_{v}^{C_l}$ be a dominant partial conjugation where $l\neq i$. Choose another subordinate partial conjugation $P_{w}^{D}$. To check the normality, it suffices to check that $P_{w}^{D} P_{v}^{C_l} (P_{w}^{D})^{-1}\in H'$ and $(P_{w}^{D})^{-1} P_{v}^{C_l} P_{w}^{D}\in H'$. There are only four possibilities and by using (4) in Theorem~\ref{poutpresentation}, we get
        \begin{enumerate}
            \item If $w\in C_{l}$, then $P_{w}^{D} P_{v}^{C_l} (P_{w}^{D})^{-1}=(P_{w}^{D})^{-1} P_{v}^{C_l} P_{w}^D=P_{v}^{C_l}$ since $D\subset C_{l}$.
            \item If $w\notin C_{i}$ and $w\notin C_{l}$, then $P_{w}^{D} P_{v}^{C_l} (P_{w}^{D})^{-1}=(P_{w}^{D})^{-1} P_{v}^{C_l} P_{w}^D=P_{v}^{C_l}$ since $D\not\subset C_{i}$ and $D\not\subset C_{l}$.
            \item If $w\in C_i$ and $v\notin D$, then $P_{w}^{D} P_{v}^{C_l} (P_{w}^{D})^{-1}=(P_{w}^{D})^{-1} P_{v}^{C_l} P_{w}^D=P_{v}^{C_l}$.
            \item If $w\in C_i$ and $v\in D$, then $P_{w}^{D} P_{v}^{C_l} (P_{w}^{D})^{-1}=(P_{w}^{C_l})^{-1}P_{v}^{C_l}P_{w}^{C_l}\in H'$, and $(P_{w}^{D})^{-1} P_{v}^{C_l} P_{w}^D=P_{w}^{C_l} P_{v}^{C_l} (P_{w}^{C_l})^{-1}\in H'$.
        \end{enumerate}
        Therefore, $H'\trianglelefteq H$.
    \end{proof}
    The last two relations in the proof of Lemma \ref{normality1} imply that for any element in $H$, all subordinate partial conjugations can be moved from left to right in a word presentation of the element. Thus we have $H' K=H$. To see that $H'\cap K$ is trivial, as before, consider a map $\mathrm{PSO}(A_{\Gamma})\to K$ sending standard generators of $K$ to themselves and other standard generators of $\mathrm{PSO}(A_{\Gamma})$ to the identity. By looking at the relations in Theorem \ref{poutpresentation} again, it is a well-defined homomorphism, so $H'\cap K=\{1\}$.
    
    Let $H_{l}'$ be a subgroup of $\mathrm{PSO}(A_{\Gamma})$ generated by the union of the generating sets of $H_j$ for $3 \leq j \leq l$ that we choose. Since $H_3 '=H_3$, the last things that we need to check are $H'=H_n\rtimes H_{n-1}'$ and $H_{l}'=H_{l}\rtimes H_{l-1}'$ for $4\leq l \leq n-1$.
    
    \begin{lem}\label{normality2}
        $H_n$ is a normal subgroup of $H'$, and $H_l$ is a normal subgroup of $H_l '$.
    \end{lem}
    
    \begin{proof}[Proof of Lemma \ref{normality2}]
    The proof that $H_l\trianglelefteq H_l '$ is the same as the proof that $H_n\trianglelefteq H'$ by changing $n$ for $l$, so we only show that $H_n\trianglelefteq H'$. For each $1\leq l \leq n$, let $v_l\in C_l$ be a vertex defining dominant partial conjugations. For $i,j\neq n$ and $k\neq j,n$, it suffices to check that $P_{v_j}^{C_k} P_{v_i}^{C_n} (P_{v_j}^{C_k})^{-1}\in H_n$, $(P_{v_j}^{C_k})^{-1} P_{v_i}^{C_n} P_{v_j}^{C_k}\in H_n$, $P_{v_j}^{C_k} P_{v_n}^{C_i} (P_{v_j}^{C_k})^{-1}\in H_n$, and $(P_{v_j}^{C_k})^{-1} P_{v_n}^{C_i} P_{v_j}^{C_k}\in H_n$. There are several possibilities:
        \begin{enumerate}
            \item $P_{v_j}^{C_k} P_{v_i}^{C_n} (P_{v_j}^{C_k})^{-1}=(P_{v_j}^{C_k})^{-1} P_{v_i}^{C_n} P_{v_j}^{C_k}=P_{v_i}^{C_n}$ if $k\neq i$
            \item $P_{v_j}^{C_i} P_{v_i}^{C_n} (P_{v_j}^{C_i})^{-1}=(P_{v_j}^{C_n})^{-1}P_{v_i}^{C_n} P_{v_j}^{C_n}\in H_n$ and\\
            $(P_{v_j}^{C_i})^{-1} P_{v_i}^{C_n} P_{v_j}^{C_i}=P_{v_j}^{C_n}P_{v_i}^{C_n} (P_{v_j}^{C_n})^{-1}\in H_n$
            \item $P_{v_j}^{C_k} P_{v_n}^{C_i} (P_{v_j}^{C_k})^{-1}=(P_{v_j}^{C_k})^{-1} P_{v_n}^{C_i} P_{v_j}^{C_k}=P_{v_n}^{C_i}$ if $j,k\neq i$
            \item $P_{v_j}^{C_i} P_{v_n}^{C_i} (P_{v_j}^{C_i})^{-1}= (P_{v_j}^{C_n})^{-1} P_{v_n}^{C_i} P_{v_j}^{C_n}\in H_n$ and\\
            $(P_{v_j}^{C_i})^{-1} P_{v_n}^{C_i} P_{v_j}^{C_i}$
            $= P_{v_j}^{C_n} P_{v_n}^{C_i} (P_{v_j}^{C_n})^{-1}\in H_n$
            \item $P_{v_i}^{C_k} P_{v_n}^{C_i} (P_{v_i}^{C_k})^{-1}=P_{v_n}^{C_i} P_{v_n}^{C_k}
                (P_{v_i}^{C_n})^{-1} (P_{v_n}^{C_k})^{-1} P_{v_i}^{C_n}\in H_n$ and\\
            $(P_{v_i}^{C_k})^{-1} P_{v_n}^{C_i} P_{v_i}^{C_k}= P_{v_n}^{C_i} P_{v_n}^{C_k}
                P_{v_i}^{C_n} (P_{v_n}^{C_k})^{-1} (P_{v_i}^{C_n})^{-1}\in H_n$
        \end{enumerate}
        The last two cases come from these equalities:
        \begin{align*}
                P_{v_i}^{C_k} P_{v_n}^{C_i} (P_{v_i}^{C_k})^{-1}&=P_{v_n}^{C_i} (P_{v_n}^{C_i})^{-1}
                P_{v_i}^{C_k} P_{v_n}^{C_i} (P_{v_i}^{C_k})^{-1}\\
                 &=P_{v_n}^{C_i} P_{v_n}^{C_k}
                P_{v_i}^{C_k} (P_{v_n}^{C_k})^{-1} (P_{v_i}^{C_k})^{-1}\\
                 &=P_{v_n}^{C_i} P_{v_n}^{C_k}
                (P_{v_i}^{C_n})^{-1} (P_{v_n}^{C_k})^{-1} P_{v_i}^{C_n},
        \end{align*}
        \begin{align*}
                (P_{v_i}^{C_k})^{-1} P_{v_n}^{C_i} P_{v_i}^{C_k}&=P_{v_n}^{C_i} (P_{v_n}^{C_i})^{-1}
                (P_{v_i}^{C_k})^{-1} P_{v_n}^{C_i} P_{v_i}^{C_k}\\
                 &=P_{v_n}^{C_i} P_{v_n}^{C_k}
                (P_{v_i}^{C_k})^{-1} (P_{v_n}^{C_k})^{-1} P_{v_i}^{C_k}\\
                 &=P_{v_n}^{C_i} P_{v_n}^{C_k}
                P_{v_i}^{C_n} (P_{v_n}^{C_k})^{-1} (P_{v_i}^{C_n})^{-1}.
        \end{align*}
        Therefore, $H_n\trianglelefteq H'$ and $H_l\trianglelefteq H_l'$.
    \end{proof}
    
    The relations in Lemma \ref{normality2} enable us to move the generators in a word presentation of an element in $H'$ from left to right. Thus we have $H_n H_{n-1}'=H'$ and $H_l H_{l-1}'=H_l'$. To see that $H_n \cap H_{n-1}'$ and $H_l\cap H_{l-1}'$ are trivial, one can consider well-defined maps $\mathrm{PSO}(A_{\Gamma})\to \langle K, H_{n-1}'\rangle$ and $\mathrm{PSO}(A_{\Gamma})\to \langle K, H_{l-1}'\rangle$ defined by sending the standard generators of the codomains to themselves, and other generators to the identity. The existence of these maps gives the triviality of both $H_n \cap H_{n-1}'$ and $H_l\cap H_{l-1}'$.
\end{proof}

Consider a graph with a maximal SIL-pair system containing three simultaneous shared components. We state a theorem that $\mathrm{PSO}(A_{\Gamma})$ is acylindrically hyperbolic in certain cases. To see this, we first show that $\mathrm{PSO}(A_{\Gamma})$ can be viewed as an HNN-extension group.

\begin{lem}[\cite{osin2015acylindrical}]\label{HNN}
 Let $G$ be a finitely presented group admitting a subjective homomorphism $G\to \mathbb{Z}$. Then $G$ is an HNN-extension of a finitely generated group with finitely generated associated subgroups.
\end{lem}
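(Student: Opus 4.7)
The plan is to construct the HNN-extension structure directly from a finite presentation of $G$. Let $\phi : G \twoheadrightarrow \Z$ be the given surjection. After Nielsen transformations on a finite presentation, I may assume
\[ G = \langle t, s_1, \ldots, s_k \mid r_1, \ldots, r_m \rangle \]
with $\phi(t) = 1$ and $\phi(s_j) = 0$ for every $j$. Writing $s_{j, i} := t^i s_j t^{-i}$, the kernel of $\phi$ is generated as a subgroup of $G$ by $\{s_{j, i} : i \in \Z, \, 1 \leq j \leq k\}$.

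For each integer $N \geq 0$, set
\[ A_N = \langle s_{j, i} : |i| \leq N \rangle, \quad B_N = \langle s_{j, i} : -N \leq i \leq N - 1 \rangle, \quad C_N = t B_N t^{-1}. \]
All three are finitely generated, $B_N$ and $C_N$ sit inside $A_N$, and conjugation by $t$ furnishes an isomorphism $B_N \to C_N$. Since each relator $r_\ell$ has total $\phi$-weight zero, inserting $t^{-i} t^i$ pairs rewrites $r_\ell$ as a word in the $s_{j,i}$ whose indices lie in a finite interval $[-M_\ell, M_\ell]$, with $M_\ell$ controlled by the word length of $r_\ell$. Choosing $N \geq \max_\ell M_\ell$ forces every relator to be a word in the generators of $A_N$.

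I would then consider the HNN-extension
\[ \widetilde G = \bigl\langle A_N, \tau \;\bigm|\; \tau b \tau^{-1} = t b t^{-1} \text{ for } b \in B_N \bigr\rangle, \]
where the defining relations of $A_N$ are tacitly included, and verify that the natural map $\widetilde G \to G$ sending $\tau \mapsto t$ and fixing $A_N$ is an isomorphism. Surjectivity is immediate, and the choice of $N$ guarantees that every relator of $G$ already holds in $\widetilde G$. Injectivity is the crux: I would appeal to Bass--Serre theory, pulling back the standard action of $\Z$ on its Cayley tree along $\phi$ to produce a $G$-action on a tree whose vertex and edge stabilizers are precisely $A_N$ and $B_N$ (after choosing a fundamental domain), or equivalently apply Britton's lemma to the normal form obtained by shuffling all occurrences of $\tau^{\pm 1}$ to the left via the HNN-relations. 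The main obstacle is exactly this injectivity step, since surjectivity and the finite-generation counts are bookkeeping; once injectivity is in hand, $G$ is exhibited as the desired HNN-extension of the finitely generated base $A_N$ with finitely generated associated subgroups $B_N$ and $C_N$.
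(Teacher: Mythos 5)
The paper does not prove this lemma at all: it is quoted from the literature (it is essentially the classical Bieri--Strebel argument), so there is no in-paper proof to compare against. Judged on its own, your construction of the candidate splitting is the standard and correct one: normalizing the generators so that $\phi(t)=1$ and $\phi(s_j)=0$, passing to the shifted generators $s_{j,i}=t^is_jt^{-i}$ of $\ker\phi$, and using finite presentability to see that all relators rewrite into a window $[-N,N]$ is exactly where the hypothesis of finite presentation enters, and your $A_N$, $B_N$, $C_N$ are the right base and associated subgroups.

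The gap is the injectivity of $\widetilde G\to G$, which you flag but whose proposed resolutions do not work as stated. Pulling back the $\Z$-action on its Cayley tree along $\phi$ gives a $G$-action on a line whose vertex and edge stabilizers are all equal to $\ker\phi$, not to $A_N$ and $B_N$; to get those stabilizers you would need the Bass--Serre tree of the very splitting you are trying to establish, which is circular. Likewise, Britton's lemma gives normal forms inside $\widetilde G$ but says nothing about whether a reduced word maps to a nontrivial element of $G$. The step you are missing is the construction of an inverse homomorphism $G\to\widetilde G$ sending $t\mapsto\tau$ and $s_j\mapsto s_{j,0}$: each relator $r_\ell(\tau,s_{j,0})$ can be rewritten, using only the HNN-relations $\tau s_{j,i}\tau^{-1}=s_{j,i+1}$ for $-N\le i\le N-1$ (legitimate because the prefix $t$-exponent sums of $r_\ell$ stay in $[-N,N]$ by your choice of $N$, and the total exponent is $0$ since $\phi(r_\ell)=0$), into a word $\tilde r_\ell$ in the generators of $A_N$; this word represents the identity in $G$, hence in the subgroup $A_N\le G$, hence in $\widetilde G$ because the base group of an HNN-extension embeds. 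This yields a well-defined homomorphism inverse to $\widetilde G\to G$ on generators, and injectivity follows with no appeal to trees or normal forms.
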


  \begin{remark}\label{rem:associatedsubgroups}
  According to the proof of Lemma~\ref{HNN}, one can describe the structure of the splitting of $G$. Choose an element of $G$ sent to $\pm 1$ in $\Z$. Then $G$ can be written as a finite presentation
  \[G=\langle t,a_1,\ldots,a_n \, | \, R_1,\ldots, R_m\rangle.\]
  Note that after taking conjugation by a suitable power of $t$, every $R_i$ can be written as a product of elements of the form $t^{\beta}a_{\alpha}^{\pm 1} t^{-\beta}$, where $\beta$ is a nonnegative integer. Denote $b_{\alpha \beta}=t^{\beta}a_{\alpha}^{\pm 1} t^{-\beta}$, and let $B$ be a set of all $b_{\alpha \beta}$ appeared in $\{R_1,\ldots,R_m\}$ as a letter. Then we obtain a new presentation of $G$
  \[G=\langle t, B \, | \, S_1,\ldots, S_m, \mathcal{T}\rangle,\]
  where $S_i$ is obtained from $R_i$ after the rewriting using $b_{\alpha \beta}$'s and $\mathcal{T}$ is the set of all relations of the form
  \[b_{\alpha,\beta+1}=t b_{\alpha,\beta} t^{-1}.\]
  Therefore, the associated subgroups are $\{b_{\alpha\beta} \, | \,1\leq \alpha \leq n,\, 0\leq \beta\leq N-1\}$ and $\{b_{\alpha\beta} \, | \,1\leq \alpha \leq n,\, 1\leq \beta\leq N\}$, where $N$ is the maximal number of occurrences of $t$ in the relators $\{R_1,\ldots,R_m\}$. We use this structure to prove our theorems subsequently.
  \end{remark}
  
We explain why we only focus on the subgroup $\mathrm{PSO}(A_{\Gamma})$ instead of the whole group $\Out(A_{\Gamma})$. Even though $\mathrm{PSO}(A_{\Gamma})$ can be written as an HNN-extension with stable letter $t$, $\Out(A_{\Gamma})$ may not be an HNN-extension with the same stable letter. For example, suppose $\Gamma$ is a graph with a maximal SIL-pair system containing at least three simultaneously shared components. Let $t=P_{w_1}^{C_3}$ be the stable letter, and let $\iota_{w_1}$ be the inversion defined by $w_1\mapsto w_1^{-1}$ and fixing other generators. Then $\iota_{w_1} t\iota_{w_1}^{-1}=t^{-1}$, and since $\iota_{w_1}$ has order 2, the map $\Out(A_{\Gamma})\to \mathbb{Z}$ sending $t$ to $1$ is not well-defined.
  
We may try to apply Minasyan-Osin's theorem to decide acylindrical hyperbolicity of an HNN-extension group.
  
\begin{lem}[\cite{minasyan2015acylindrical}]\label{minasyanosin}
    Let $\widetilde{G}$ be an HNN-extension of a group $G$ with associated subgroups $A$ and $B$. Suppose that $A\neq G \neq B$ and there is an element $g\in \widetilde{G}$ such that $|gAg^{-1} \cap A|<\infty$. Then $\widetilde{G}$ is acylindrically hyperbolic.
\end{lem}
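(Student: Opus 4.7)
The plan is to exploit the natural action of $\widetilde{G}$ on its Bass--Serre tree $T$ and invoke Osin's criterion: a group admitting a non-elementary isometric action on a hyperbolic space with a loxodromic WPD element is acylindrically hyperbolic. Since $T$ is a simplicial tree, it is $0$-hyperbolic, so it suffices to verify that $\widetilde{G}\curvearrowright T$ is non-elementary and that there exists a loxodromic element satisfying the WPD condition.

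For non-elementariness, recall that in the Bass--Serre tree of an HNN-extension, vertex stabilizers are conjugates of $G$ while edge stabilizers are conjugates of $A$ (identified with $B$ through the stable letter $t$). The assumption $A \neq G \neq B$ ensures that every vertex stabilizer properly contains its adjacent edge stabilizers; hence no vertex of $T$ is globally fixed, and the stable letter $t$ acts as a loxodromic isometry along an axis $\ell_t$. Picking an element $h \in G \setminus A$ (which exists since $A \neq G$) yields a conjugate $h t h^{-1}$ whose axis differs from $\ell_t$, and a standard ping-pong argument on $T$ produces two independent loxodromic isometries; this rules out any global fixed point or fixed pair in $\partial T$.

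For WPD, the goal is to upgrade the single hypothesis $|gAg^{-1}\cap A| < \infty$ into a uniform finite-intersection statement along the axis of some loxodromic element. Using $g$ together with $t$, one constructs a loxodromic $f\in \widetilde{G}$ whose axis contains both an edge $e$ stabilized by a conjugate of $A$ and a later edge $f^N e$ whose stabilizer is a conjugate of $gAg^{-1}$. Then any element of $\widetilde{G}$ that moves both $e$ and $f^N e$ by a bounded amount must lie in a bounded perturbation of (a conjugate of) $A \cap gAg^{-1}$, which is finite by assumption. Together with non-elementariness, Osin's criterion yields that $\widetilde{G}$ is acylindrically hyperbolic.

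The main obstacle is the WPD step: the hypothesis provides only a \emph{single} pair $(A, gAg^{-1})$ with finite intersection, whereas WPD demands a uniform finiteness statement for all sufficiently long translations along an axis. Propagating this one-off finiteness into the tree-theoretic WPD condition is the technical heart of the Minasyan--Osin argument, requiring a careful combinatorial construction of the loxodromic element as a word in $t$, $g$, and conjugating elements of $G$ that forces the axes and the relevant intersections to align. Once that is achieved, the rest is routine tree geometry and an application of Osin's characterization of acylindrical hyperbolicity.
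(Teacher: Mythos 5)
This lemma is quoted by the paper from Minasyan--Osin \cite{minasyan2015acylindrical} without proof, so there is no in-paper argument to compare against; your proposal is an attempt to reconstruct the source's proof. Your skeleton is the right one (act on the Bass--Serre tree of the HNN-extension, show the action is non-elementary, exhibit a loxodromic WPD element, invoke Osin's criterion), but as written it has a genuine gap at exactly the point you flag yourself: the WPD step is described, not carried out. The hypothesis gives a \emph{single} element $g$ with $|gAg^{-1}\cap A|<\infty$, i.e.\ finiteness of the common stabilizer of one particular pair of edges in the tree, and you assert without construction that ``one constructs a loxodromic $f$'' whose axis realizes this pair and for which the bounded-displacement set is controlled by that single finite intersection. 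This is the entire content of the theorem: Minasyan--Osin prove a general statement that a minimal, non-elliptic action on a tree which is \emph{weakly acylindrical} (some two points at distance $\geq$ some $d$ have finite common stabilizer) already forces the existence of a WPD loxodromic, and the globalization from one finite intersection to the WPD condition for all large powers along an axis is a nontrivial combinatorial argument using minimality to translate the good pair of edges along the axis. Saying that this ``is the technical heart'' and that ``once that is achieved, the rest is routine'' leaves the proof incomplete.

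Two smaller points. First, your non-elementariness argument needs care: exhibiting $t$ and $hth^{-1}$ with distinct axes does not by itself give two \emph{independent} loxodromics (their endpoint pairs at infinity could still intersect, or the action could be lineal); one should argue that the non-ascending hypothesis $A\neq G\neq B$ makes the tree non-linear and the action minimal on an unbounded subtree with infinitely many ends. Second, Osin's criterion also requires ruling out that $\widetilde{G}$ is virtually cyclic, which again follows from $A\neq G\neq B$ but should be said. None of this is fatal to the strategy --- it is the correct strategy --- but the proposal as it stands is an outline of \cite{minasyan2015acylindrical} rather than a proof.
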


If $|gAg^{-1} \cap A|<\infty$ holds for some $g\in \widetilde{G}$, then we say that $A$ is weakly malnorlmal in $\widetilde{G}$. We also say that the HNN-extension is non-ascending if $A\neq G\neq B$. In our cases, the following lemma is useful to prove weak malnormality.

\begin{lem}[Britton's lemma, \cite{miller2002combinatorial}]
    Let $\widetilde{G}$ be an HNN-extension of $G$ with associated subgroups $A$ and $B$. Let $t$ be the stable letter.  If a word $w=g_0 t^{\epsilon_1} g_1 t^{\epsilon_2} \cdots g_{n-1} t^{\epsilon_n} g_n$ with $g_i\in G$ and with $\epsilon_i=\pm 1$ is equal to the identity in $\widetilde{G}$, then either $n=0$ and $g_0=1$ in $G$, or $w$ contains a subword of the form $tat^{-1}$ where $a\in A$, or of the form $t^{-1}bt$ where $b\in B$.
\end{lem}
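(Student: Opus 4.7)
The plan is to prove Britton's lemma by realizing $\widetilde{G}$ as the fundamental group of a graph of groups with a single vertex (with vertex group $G$) and a single edge (a loop, with edge group $A$), where the two edge inclusions are $A \hookrightarrow G$ and the map $\phi : A \to B \subset G$ defining the HNN-extension (so that $tat^{-1} = \phi(a)$ for $a \in A$). Bass--Serre theory then gives a simplicial action of $\widetilde{G}$ on a tree $T$ with a base vertex $v_0$ whose stabilizer is $G$, and a distinguished edge $e_0$ from $v_0$ to $tv_0$ with stabilizer $A$; the edges emanating from $v_0$ are parametrized by the disjoint union of coset spaces $G/A$ and $G/B$.

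First, I would show that any word $w = g_0 t^{\epsilon_1} g_1 \cdots t^{\epsilon_n} g_n$ traces an $n$-step edge walk in $T$ starting at $v_0$: the prefix through $g_{i-1}$ determines a vertex, $t^{\epsilon_i}$ moves along an incident edge, and the next $G$-factor $g_i$ acts by a stabilizer rotation at the new vertex before the next $t^{\pm 1}$ chooses an outgoing edge. Second, and crucially, I would verify that the existence of a pinch---a subword $tat^{-1}$ with $a \in A$ or $t^{-1}bt$ with $b \in B$---is precisely the condition for this walk to backtrack across the edge it just traversed, because $A$ and $B$ are exactly the stabilizers of the two ends of $e_0$ viewed from the two incident vertices. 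In other words, the hypothesis that $w$ has no pinch is equivalent to the corresponding walk being non-backtracking.

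Since $T$ is a tree, a non-backtracking walk of length $n$ ends at a vertex at distance $n$ from its start, so $w \cdot v_0 = v_0$ forces $n = 0$; when $n = 0$ one has $w = g_0 \in G$, and the injection $G \hookrightarrow \widetilde{G}$ (itself a standard output of the graph-of-groups construction) gives $g_0 = 1$. The main obstacle is setting up the Bass--Serre machinery carefully for a loop graph of groups---verifying the stabilizer descriptions and pinning down the identification of pinches with backtracking---after which the argument collapses to the elementary fact that non-backtracking paths in trees are geodesics. A classical alternative is the direct normal-form proof: choose systems of coset representatives for $A$ and $B$ in $G$, show every word reduces to a pinch-free word using $tat^{-1} = \phi(a)$ and coset absorption, and prove uniqueness of the resulting normal form by constructing a faithful permutation representation of $\widetilde{G}$ on the set of normal-form sequences; there the hard step is checking that the candidate action of the generator $t$ is well-defined, which encodes exactly the same combinatorial content as the non-backtracking observation above.
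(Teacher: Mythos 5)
The paper does not prove this statement at all: Britton's lemma is quoted verbatim from the cited reference (Miller's combinatorial group theory notes) and used as a black box in the proof of Theorem \ref{beingAH}, so there is no in-paper argument to compare yours against. Judged on its own, your sketch is a correct and standard route. The Bass--Serre plan (pinch-free words trace non-backtracking edge-paths in the tree, non-backtracking paths in a tree are geodesics, hence a pinch-free word of positive $t$-length cannot act trivially on the base vertex) is the geometric proof found in Serre's \emph{Trees} and in Scott--Wall, and your alternative via coset representatives and the van der Waerden permutation-representation trick is the classical combinatorial proof in Lyndon--Schupp. One point deserves emphasis, and you partly flag it: in the Bass--Serre approach the assertion that the coset graph (vertices $gG$, edges $gA$) is actually a \emph{tree}, and the injectivity of $G\hookrightarrow\widetilde{G}$, are not free inputs --- they are essentially equivalent to the normal form theorem you are trying to prove, so a self-contained write-up must either prove treeness directly (e.g.\ via the graph-of-spaces/topological realization) or fall back on the normal-form argument, in which case the ``hard step'' you identify (well-definedness of the action of $t$ on normal-form sequences) is where all the content lives. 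With that caveat made explicit, either of your two routes fills in to a complete proof.
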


We now state the theorem proving that $\mathrm{PSO}(A_{\Gamma})$ is acylindrically hyperbolic in certain cases.

\begin{thm}\label{beingAH}
    Let $\Gamma$ be a graph with a maximal SIL-pair system $\{w_1,w_2,w_3\}$ and no additional components. Suppose the following conditions hold:
    \begin{enumerate}
        \item Each vertex in each simultaneously shared component either has the star whose complement is connected, or defines only subordinate partial conjugations, all of whose supports contain a vertex that also defines subordinate partial conjugations.
        \item The map $\varphi : K\to\Aut(H_3)$ given by Theorem \ref{semidirect} is injective.
    \end{enumerate}
    Then $\mathrm{PSO}(A_{\Gamma})$ is acylindrically hyperbolic if and only if there is no nontrivial partial conjugation defined by vertices in $\cap_i \lk(w_i)$.
\end{thm}

\begin{proof}
    If there is no nontrivial partial conjugation defined by vertices in $\cap_i \lk(w_i)$, then by Proposition \ref{product} and Theorem \ref{semidirect}, $\mathrm{PSO}(A_{\Gamma})$ is not acylindrically hyperbolic, since $G$ is non-trivial. This proves the ``if" part.
    
    To prove the ``only if" part, let $v_{1,1}=w_1,\ldots,v_{1,{n_1}}\in C_1$, $v_{2,1}=w_2,\ldots,v_{2,{n_2}}\in C_2$, and $v_{3,1}=w_3,\ldots,v_{3,{n_3}}\in C_3$ be vertices defining subordinate partial conjugations. Without loss of generality, we assume that $n_1 \leq n_2 \leq n_3$. Note that $n_1\geq 2$ holds by our assumption in the theorem.
    
    By Theorem~\ref{semidirect}, $\mathrm{PSO}(A_{\Gamma})$ is generated by all subordinate partial conjugations, dominant partial conjugations of the form $P_{v_1}^{C_3}$ for $v_1\in C_1$, $P_{v_2}^{C_3}$ for $v_2\in C_2$, and $P_{v_3}^{C_2}$ for $v_3\in C_3$ whenever they are defined. Indeed, the missing generators of the form $P_{v_1}^{C_2}$ for $v_1\in C_1$, $P_{v_2}^{C_1}$ for $v_2\in C_2$, and $P_{v_3}^{C_1}$ for $v_3\in C_3$ can be represented by the product of inverses of other generators, so we can remove the last relation in Theorem~\ref{poutpresentation} using Tietze transformation. Once we forget these missing generators, defining a map $\mathrm{PSO}(A_{\Gamma})\to \Z$ given by sending $P_{w_1}^{C_3}$ to 1 and other generators to $0$, this leads us to apply Lemma~\ref{HNN} with the stable letter $t=P_{w_1}^{C_3}$. By the arguments in Remark~\ref{rem:associatedsubgroups}, the associated subgroups $A=B$ are generated by all subordinate partial conjugations and dominant partial conjugations of the form $P_{v_1}^{C_3}$ for $v_1\in C_1$ except $P_{w_1}^{C_3}$. The existence of a map from $\mathrm{PSO}(A_{\Gamma})$ to the infinite cyclic group generated by a generator, which is not in the generating set of $A$, ensures that the given HNN-extension is non-ascending.
    
    Let
    \[P_1^i=P_{v_{2,i}}^{C_3}P_{v_{1,i}}^{C_3}P_{v_{3,i}}^{C_2},\quad P_2^i=P_{v_{2,i}}^{C_3}P_{v_{1,n_1}}^{C_3}P_{v_{3,i}}^{C_2},\quad P_3^i=P_{v_{2,{n_2}}}^{C_3}P_{v_{1,n_1}}^{C_3}P_{v_{3,i}}^{C_2},\]
    and let
    \[g=(\prod _{i=1}^{n_1} P_1^i) (\prod _{i=n_1 +1}^{n_2}P_2^i) (\prod_{i=n_2+1}^{n_3} P_3^i).\]
    Now we show that $gAg^{-1}\cap A$ is trivial.
    
    Choose $a\in A$ so that $gag^{-1}\in A$. Then $gag^{-1}a'=1$ for some $a'\in A$. Note that there is only one $t$ and one $t^{-1}$ in the word presentation of $gag^{-1}a'$. Thus Britton's lemma implies that $g_1 a g_{1}^{-1}\in A$ where
    \[g_1=P_{w_3}^{C_2}(\prod _{i=2}^{n_1} P_1^i)(\prod _{i=n_1 +1}^{n_2}P_2^i) (\prod_{i=n_2+1}^{n_3} P_3^i).\]
    Then $g_1 a g_1^{-1} a''=1$ for some $a''\in A$. By the same idea of the interpretation of $\mathrm{PSO}(A_{\Gamma})$ as the HNN-extension with the stable letter $t=P_{w_1}^{C_3}$, we can choose another stable letter $t_1=P_{w_3}^{C_2}$ and associate subgroups $A_1=B_1$ generated by all subordinate partial conjugations and dominant partial conjugations of the form $P_{v_3}^{C_2}$ for $v_3\in C_3$ except $P_{w_3}^{C_2}$. Applying Britton's lemma again, we have $g_2 a g_2^{-1}\in A_1$ where
    \[g_2=(\prod _{i=2}^{n_1} P_1^i)(\prod _{i=n_1 +1}^{n_2}P_2^i) (\prod_{i=n_2+1}^{n_3} P_3^i)\]
    since there is no letter $t_1$ in $a$ and $a''$, so there is only one $t_1$ and one $t_1^{-1}$ in the word presentation of $g_1 a g_1^{-1}a''$. Now we know that $g_2 a g_2^{-1} a_1=1$ for some $a_1\in A_1$. Letting $t_2=P_{v_{2,2}}^{C_3}$, $\mathrm{PSO}(A_{\Gamma})$ is the HNN-extension with the stable letter $t_2$ and associated subgroups $A_2=B_2$ generated by all subordinate partial conjugations and dominant partial conjugations of the form $P_{v_2}^{C_3}$ for $v_2\in C_2$ except $P_{v_{2,2}}^{C_3}$. By the same idea, we have that $g_3 a g_3^{-1}\in A_2$ where
    \[g_3=P_{v_{1,2}}^{C_3}P_{v_{3,2}}^{C_2}(\prod _{i=3}^{n_1} P_1^i)(\prod _{i=n_1 +1}^{n_2}P_2^i) (\prod_{i=n_2+1}^{n_3} P_3^i).\] Then $g_3 a g_3^{-1}a_2=1$ for some $a_2\in A_2$.
    
    Letting $t_3=P_{v_{1,2}}^{C_3}$, $\mathrm{PSO}(A_{\Gamma})$ is the HNN-extension with the stable letter $t_3$ and associated subgroups $A_3=B_3$ generated by all subordinate partial conjugations and dominant partial conjugations of the form $P_{v_1}^{C_3}$ for $v_1\in C_1$ except $P_{v_{1,2}}^{C_3}$. To apply Britton's lemma, one must be careful that $A$ contains $P_{v_{1,2}}^{C_3}$ so that there is more than one $t_3$ or $t_3^{-1}$ in the word presentation of $g_3 a g_3^{-1}$. Removing subwords of the form $t_3 a_3 t_3^{-1}$ where $a_3\in A_3$ and of the form $t_3^{-1} b_3 t_3$ where $b_3\in B_3$, we can see that $a$ can be written as
    \[a=x_0 t_3^{\epsilon_1} x_1 t_3^{\epsilon_2}\cdots x_{n-1}t_3^{\epsilon_n}x_n\]
    with $x_i$ a product of subordinate partial conjugations and dominant partial conjugations of the form $P_{v_1}^{C_3}$ for $v_1\in C_1$ except $P_{w_1}^{C_3}$ and $P_{v_{1,2}}^{C_3}$, $\epsilon_i=\pm 1$ and without subwords of the form $t_3 a_3 t_3^{-1}$ where $a_3\in A_3$ and of the form $t_3^{-1} b_3 t_3$ where $b_3\in B_3$. Suppose at least one $t_3$ or one $t_3^{-1}$ exists in the word expression of $a$. We know that
    \[g_4=P_{v_{3,2}}^{C_2}(\prod _{i=3}^{n_1} P_1^i)(\prod _{i=n_1 +1}^{n_2}P_2^i) (\prod_{i=n_2+1}^{n_3} P_3^i)\notin A_3\]
    so that there are no subwords of the form $t_3 a_3 t_3^{-1}$ where $a_3\in A_3$ and of the form $t_3^{-1} b_3 t_3$ where $b_3\in B_3$ in the presentation $g_3 a g_3^{-1} a_2$. It contradicts Britton's lemma, so there is no letter $t_3$ or $t_3^{-1}$ in the word presentation of $a$. This also implies that $g_4 a g_4^{-1}\in A_3$. Applying this procedure several times, we conclude that $a\in A'$ where $A'$ is generated by all subordinate partial conjugations and dominant partial conjugations of the form $P_{v_3}^{C_2}$ for $v_3\in C_3$ except $P_{v_{3,n_3}}^{C_2}$. By uniqueness of the expression of an element in a semi-direct product group, $a\in A\cap A'=K$ since both $A$ and $A'$ contain $K$ and do not share the same dominant partial conjugations.
    
    We also have that $P_{v_{3,n_3}}^{C_2} a (P_{v_{3,n_3}}^{C_2})^{-1}\in P_{v_{3,n_3}}^{C_2} K (P_{v_{3,n_3}}^{C_2})^{-1}\cap A''$ where $A''$ is generated by all subordinate partial conjugations and dominant partial conjugations of the form $P_{v_1}^{C_3}$ for $v_1\in C_1$ except $P_{v_{1,n_1}}^{C_3}$. By uniqueness of the expression of elements in the semi-direct product group, we have $P_{v_{3,n_3}}^{C_2} K (P_{v_{3,n_3}}^{C_2})^{-1}\cap A''\subset K$, and hence $P_{v_{3,n_3}}^{C_2} a (P_{v_{3,n_3}}^{C_2})^{-1}\in K$. Since
    \[P_{v_{3,n_3}}^{C_2} a (P_{v_{3,n_3}}^{C_2})^{-1}=P_{v_{3,n_3}}^{C_2} \varphi(a)((P_{v_{3,n_3}}^{C_2})^{-1})a,\]
    we have
    \[\varphi(a)(P_{v_{3,n_3}}^{C_2})=aP_{v_{3,n_3}}^{C_2}a^{-1}=P_{v_{3,n_3}}^{C_2},\]
    so $P_{v_{3,n_3}}^{C_2} a (P_{v_{3,n_3}}^{C_2})^{-1}=a$. By the same reason, we get $P_{v_{1,n_1}}^{C_3} a (P_{v_{1,n_1}}^{C_3})^{-1}\in K$ and so $P_{v_{1,n_1}}^{C_3} a (P_{v_{1,n_1}}^{C_3})^{-1}=a$. We can conclude that $\varphi(a)$ fixes every generator in $H_3$ so that $\varphi(a)$ is the trivial automorphism. Therefore, the identity is the only element in $gAg^{-1}\cap A$. By Lemma \ref{minasyanosin}, $\mathrm{PSO}(A_{\Gamma})$ is acylindrically hyperbolic.
\end{proof}

\begin{remark}
    If there are more than three simultaneously shared components, then the associated subgroups may have more generators compared to the previous case. For example, for $n\geq 4$, if we fix $t=P_{w_1}^{C_n}$ as the stable letter of an HNN-extension, then the generators of the form $P_{v_i}^{C_j}$ with $i,j\neq 1,n$ commute with $t$. The ideas applying Britton's lemma do not hold or become more complicated.
\end{remark}

  It is tempting to speculate that the injectivity of the map $\varphi : K\to \Aut(H_3)$ from Theorem \ref{beingAH} automatically holds. However, it is not easy to control all subordinate partial conjugations only by consideration of relations. Instead, we give a corollary of the theorem when it is evident that $\ker \varphi$ is trivial.
  
\begin{cor}\label{beingAHcor}
Let $\Gamma$ be a graph with a maximal SIL-pair system $\{w_1,w_2,w_3\}$ and no additional components. Suppose the following conditions hold:
    \begin{enumerate}
        \item Each vertex in each simultaneously shared component either has the star whose complement is connected, or defines only subordinate partial conjugations, all of whose supports contain a vertex that also defines subordinate partial conjugations.
        \item For each subordinate partial conjugation $P_{v_i}^C$ for $v_i\in C_i$, $C$ contains all vertices in $C_i$ except $v_i$ that define subordinate partial conjugations.
    \end{enumerate}
    Then $\mathrm{PSO}(A_{\Gamma})$ is acylindrically hyperbolic if and only if there is no nontrivial partial conjugation defined by vertices in $\cap_i \lk(w_i)$.
\end{cor}

\begin{proof}
    Consider the map $\varphi : K\to \Aut(H_3)$ given by Theorem \ref{semidirect}. Let $P_{v_i}^C$ be the subordinate partial conjugation with $v_i\in C_i$. By the assumption, $v_i$ generates exactly one subordinate partial conjugation. Then
    \[\varphi(P_{v_i}^{C})(P_{v_{i}'}^{C_j})
    =\begin{cases}(P_{v_{i}}^{C_j})^{-1}P_{v_{i}'}^{C_j}P_{v_{i}}^{C_j}, &\text{if $v_{i}'\in C_i$},\\ P_{v_{i}'}^{C_j}, &\text{otherwise.}\end{cases}
    \]
    To prove that $\ker \varphi$ is trivial, since two subordinate partial conjugations whose supports are included in different simultaneous shared component commute, without loss of generality, it is enough to show that for each product $P$ of subordinate partial conjugations whose supports are included in $C_1$, $\varphi(P)=id_{A_{\Gamma}}$ implies that $P=id_{A_{\Gamma}}$. We first collect all vertices $v_1,\ldots,v_n$ defining subordinate partial conjugations $P_{v_1}^{C^1},\ldots,P_{v_n}^{C^n}$ whose supports are included in $C_1$. Let $P=(P_{v_{i_1}}^{C^{i_1}})^{\epsilon_1}\cdots(P_{v_{i_k}}^{C^{i_k}})^{\epsilon_k}$ where $\epsilon=\pm 1$. Then
    \[\varphi(P)(P_{v_i}^{C_3})=(P_{v_{i_1}}^{C_3})^{-\epsilon_1}\cdots(P_{v_{i_k}}^{C_3})^{-\epsilon_k}P_{v_i}^{C_3}(P_{v_{i_k}}^{C_3})^{\epsilon_k}\cdots(P_{v_{i_1}}^{C_3})^{\epsilon_1}.\]
    Note that $\langle P_{v_1}^{C_3}\ldots,P_{v_n}^{C_3}\rangle\cong \mathbb{F}_n$ as $\langle v_1,\ldots,v_n\rangle\cong \mathbb{F}_n$. Thus for each $i$ the centralizer of $P_{v_i}^{C_3}$ in $K$ is the cyclic group generated by $P_{v_i}^{C_3}$. This means that $P$ should be the identity if $\varphi(P)$ fixes all $P_{v_i}^{C_3}$. Therefore, $\ker \varphi$ is trivial and the corollary follows from Theorem~\ref{beingAH}.
\end{proof}

\begin{remark}
    One may apply Genevois' theorem in \cite{genevois2019negative} to prove acylindrical hyperbolicity of semi-direct product groups. In our cases, however, it is challenging to prove acylindrical hyperbolicity of the subgroup generated by dominant partial conjugations outlined in Theorem \ref{semidirect}. In the case of Corollary \ref{beingAHcor}, if it can be shown that $H_3$ is a free group, then it can be concluded that $\mathrm{PSO}(A_{\Gamma})$ is acylindrically hyperbolic by Genevois' theorem. While it appears that $H_3$ is a free group, but the authors have been unable to identify a general approach for proving this.
    
    In the case where there are precisely two simultaneous shared components, we know that $\Out(A_{\Gamma})$ is not acylindrically hyperbolic in the absence of transvections, despite the absence of nontrivial partial conjugations defined by vertices in $\cap_i \lk(w_i)$. In this case, the dominant partial conjugation is equal to the product of inverses of subordinate partial conjugations, so $\mathrm{PSO}(A_{\Gamma})$ can be written as a product of two infinite subgroups.
\end{remark}

\subsection{Examples} \label{subsec:examples}

Let $\Gamma_2$ be the graph on the left in Figure \ref{gamma23}. In this graph, we can choose a maximal SIL-pair system $\{w_1, w_2, w_3\}$ consisting of one vertex from each of the three vertical branches. The intersection of their links is exactly the set of two horizontal extremal vertices $l_1$ and $l_2.$ Note that there are three simultaneously shared components and no additional components. $\Out^*(A_{\Gamma_2})$ has partial conjugations defined by (two) vertices in $\cap \lk(w_i)$, and by Theorem \ref{semidirect}, $\Out^*(A_{\Gamma_2})$ is a direct product of two infinite subgroups. Therefore, $\Out^*(A_{\Gamma_2})$ is not acylindrically hyperbolic.
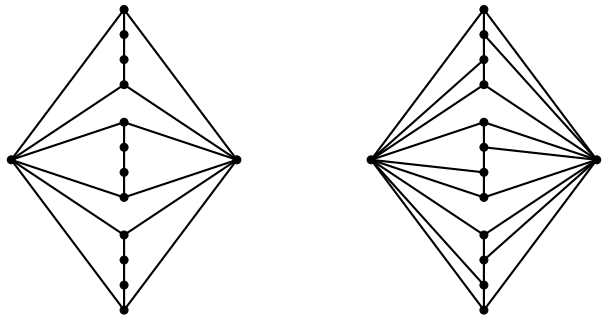
\begin{figure}[ht]
\centering
\begin{tikzpicture}
\draw[black, thick] (0,0.5) -- (-1.5,0);
\draw[black, thick] (0,0.5) -- (1.5,0);
\draw[black, thick] (0,-0.5) -- (-1.5,0);
\draw[black, thick] (0,-0.5) -- (1.5,0);
\draw[black, thick] (0,-0.5) -- (0,0.5);
\draw[black, thick] (0,2) -- (-1.5,0);
\draw[black, thick] (0,2) -- (1.5,0);
\draw[black, thick] (0,1) -- (-1.5,0);
\draw[black, thick] (0,1) -- (1.5,0);
\draw[black, thick] (0,1) -- (0,2);
\draw[black, thick] (0,-2) -- (-1.5,0);
\draw[black, thick] (0,-2) -- (1.5,0);
\draw[black, thick] (0,-1) -- (-1.5,0);
\draw[black, thick] (0,-1) -- (1.5,0);
\draw[black, thick] (0,-1) -- (0,-2);
\filldraw[black] (-1.5,0) circle (1.5pt)
node[anchor=east] {$l_1$};
\filldraw[black] (1.5,0) circle (1.5pt)
node[anchor=west] {$l_2$};
\filldraw[black] (0,0.5) circle (1.5pt)
node[anchor=west] {$w_2$};
\filldraw[black] (0,-0.5) circle (1.5pt);
\filldraw[black] (0,1/6) circle (1.5pt);
\filldraw[black] (0,-1/6) circle (1.5pt);
\filldraw[black] (0,2) circle (1.5pt)
node[anchor=west] {$w_1$};
\filldraw[black] (0,1) circle (1.5pt);
\filldraw[black] (0,5/3) circle (1.5pt);
\filldraw[black] (0,4/3) circle (1.5pt);
\filldraw[black] (0,-2) circle (1.5pt);
\filldraw[black] (0,-1) circle (1.5pt)
node[anchor=west] {$w_3$};
\filldraw[black] (0,-5/3) circle (1.5pt);
\filldraw[black] (0,-4/3) circle (1.5pt);
\end{tikzpicture}
\qquad \qquad
\begin{tikzpicture}
\draw[black, thick] (0,0.5) -- (-1.5,0);
\draw[black, thick] (0,0.5) -- (1.5,0);
\draw[black, thick] (0,-0.5) -- (-1.5,0);
\draw[black, thick] (0,-0.5) -- (1.5,0);
\draw[black, thick] (0,-0.5) -- (0,0.5);
\draw[black, thick] (0,1/6) -- (1.5,0);
\draw[black, thick] (0,-1/6) -- (-1.5,0);
\draw[black, thick] (0,2) -- (-1.5,0);
\draw[black, thick] (0,2) -- (1.5,0);
\draw[black, thick] (0,1) -- (-1.5,0);
\draw[black, thick] (0,1) -- (1.5,0);
\draw[black, thick] (0,1) -- (0,2);
\draw[black, thick] (0,5/3) -- (1.5,0);
\draw[black, thick] (0,4/3) -- (-1.5,0);
\draw[black, thick] (0,-2) -- (-1.5,0);
\draw[black, thick] (0,-2) -- (1.5,0);
\draw[black, thick] (0,-1) -- (-1.5,0);
\draw[black, thick] (0,-1) -- (1.5,0);
\draw[black, thick] (0,-1) -- (0,-2);
\draw[black, thick] (0,-5/3) -- (-1.5,0);
\draw[black, thick] (0,-4/3) -- (1.5,0);
\filldraw[black] (-1.5,0) circle (1.5pt);
\filldraw[black] (1.5,0) circle (1.5pt);
\filldraw[black] (0,0.5) circle (1.5pt)
node[anchor=west] {$w_2$};
\filldraw[black] (0,-0.5) circle (1.5pt)
node[anchor=west] {$w_2'$};
\filldraw[black] (0,1/6) circle (1.5pt);
\filldraw[black] (0,-1/6) circle (1.5pt);
\filldraw[black] (0,2) circle (1.5pt)
node[anchor=west] {$w_1$};
\filldraw[black] (0,1) circle (1.5pt)
node[anchor=west] {$w_1'$};
\filldraw[black] (0,5/3) circle (1.5pt);
\filldraw[black] (0,4/3) circle (1.5pt);
\filldraw[black] (0,-2) circle (1.5pt)
node[anchor=west] {$w_3'$};
\filldraw[black] (0,-1) circle (1.5pt)
node[anchor=west] {$w_3$};
\filldraw[black] (0,-5/3) circle (1.5pt);
\filldraw[black] (0,-4/3) circle (1.5pt);
\end{tikzpicture}
\caption{The graph $\Gamma_2$ and $\Gamma_3$.}
\label{gamma23}
\end{figure}
We give more examples satisfying the assumptions in Corollary~\ref{beingAHcor}. These examples suggest infinitely many cases of having acylindrically hyperbolic $\Out^*(A_{\Gamma})$.

Define a graph $\Gamma_3$ as illustrated to the right in Figure \ref{gamma23}. In this case, $\mathrm{PSO}(A_{\Gamma_3})$ is isomorphic to a subgroup of Fouxe-Rabinovitch group $\Out(\mathbb{F}_6;$ $\{\langle x_1,x_2 \rangle,$ $\langle x_3,x_4 \rangle, \langle x_5,x_6 \rangle\}^t)$ where $x_i's$ are the usual free generators of $\mathbb{F}_6$. See \cite{day2019relative} for the details.

The existence of a maximal SIL-pair system allows us to decompose the graph $\Gamma_3$. It has three simultaneously shared components $C_1$, $C_2$, and $C_3$. Each component $C_i$ has four vertices $w_i, w_i', v_i, v_i'$ where $w_i$ and $w_i'$ are the topmost and the bottommost vertices respectively, $v_i$ is the vertex adjacent to $w_i$, and $v_i'$ is the vertex adjacent to $w_i'$. Let $C^i=\{v_i', w_i'\}$ and $C'^i=\{v_i, w_i\}$. By Theorem \ref{semidirect}, $\mathrm{PSO}(A_{\Gamma_3})=H_3\rtimes K$ where
\[ H_3=\langle P_{w_1}^{C_3}, P_{w_1'}^{C_3}, P_{w_2}^{C_3}, P_{w_2'}^{C_2}, P_{w_3}^{C_2}, P_{w_3'}^{C_2}\rangle\]
and
\[K= \langle P_{w_1}^{C^1},  P_{w_1'}^{C'^{1}}, P_{w_2}^{C^2},  P_{w_2'}^{C'^{2}}, P_{w_3}^{C^3},  P_{w_3'}^{C'^{3}}\rangle.\]
Let $t=P_{w_1}^{C_3}$. Then $\mathrm{PSO}(A_{\Gamma_3})$ is an HNN-extension group of $\langle P_{w_1'}^{C_3}, P_{w_2}^{C_3}, P_{w_2'}^{C_2},$ $P_{w_3}^{C_2}, P_{w_3'}^{C_2},  P_{w_1}^{C^1},  P_{w_1'}^{C'^{1}}, P_{w_2}^{C^2},  P_{w_2'}^{C'^{2}}, P_{w_3}^{C^3},  P_{w_3'}^{C'^{3}}\rangle$ with associate subgroups
\[A=B=\langle P_{w_1'}^{C_3}, P_{w_1}^{C^1},  P_{w_1'}^{C'^{1}}, P_{w_2}^{C^2},  P_{w_2'}^{C'^{2}}, P_{w_3}^{C^3},  P_{w_3'}^{C'^{3}}\rangle.\] The relations containing $t$ can be rewritten as follows:
\begin{enumerate}
    \item $tP_{w_1}^{C^{1}} t^{-1}=P_{w_1}^{C^{1}}$,
    \item $tP_{w_1'}^{C_3} t^{-1}=(P_{w_1}^{C^{1}})^{-1} P_{w_1'}^{C_3} P_{w_1}^{C^{1}}$,
    \item $tP_{w_1'}^{C'^{1}}t^{-1}=P_{w_1'}^{C'^{1}} P_{w_1'}^{C_3} (P_{w_1}^{C^1})^{-1} (P_{w_1'}^{C_3})^{-1} P_{w_1}^{C^1}$,
    \item $tP_{w_i}^{C^i} t^{-1}=P_{w_i}^{C^i}$ for each $i=2,3$,
    \item $tP_{w_i'}^{C'^i} t^{-1}=P_{w_i'}^{C'^i}$ for each $i=2,3$.
\end{enumerate}
Since there are no transvections in $\Gamma_3$, $\mathrm{PSO}(A_{\Gamma_3})=\Out^*(A_{\Gamma_3})$ is a finite index subgroup of $\Out(A_{\Gamma_3})$, and by Corollary \ref{beingAHcor}, it is acylindrically hyperbolic. Each $C_i$ consists of two vertices that define exactly one subordinate partial conjugation.
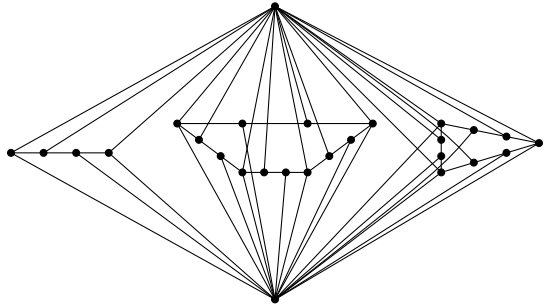
\begin{figure}[h]
\centering
\begin{tikzpicture}[scale = 1.3]
\filldraw[black] (0,1.5) circle (1pt);
\filldraw[black] (0,-1.5) circle (1pt);
\filldraw[black] (-1.7,0) circle (1pt);
\filldraw[black] (-61/30,0) circle (1pt);
\filldraw[black] (-71/30,0) circle (1pt);
\filldraw[black] (-2.7,0) circle (1pt);
\filldraw[black] (1,0.3) circle (1pt);
\filldraw[black] (-1,0.3) circle (1pt);
\filldraw[black] (1/3,-0.2) circle (1pt);
\filldraw[black] (-1/3,-0.2) circle (1pt);
\filldraw[black] (-1/9,-0.2) circle (1pt);
\filldraw[black] (1/9,-0.2) circle (1pt);
\filldraw[black] (5/9,-1/30) circle (1pt);
\filldraw[black] (-5/9,-1/30) circle (1pt);
\filldraw[black] (7/9,4/30) circle (1pt);
\filldraw[black] (-7/9,4/30) circle (1pt);
\filldraw[black] (1/3,0.3) circle (1pt);
\filldraw[black] (-1/3,0.3) circle (1pt);
\filldraw[black] (1.7,0.3) circle (1pt);
\filldraw[black] (1.7,-0.2) circle (1pt);
\filldraw[black] (1.7,4/30) circle (1pt);
\filldraw[black] (1.7,-1/30) circle (1pt);
\filldraw[black] (2.7,0.1) circle (1pt);
\filldraw[black] (71/30,1/6) circle (1pt);
\filldraw[black] (61/30,7/30) circle (1pt);
\filldraw[black] (71/30,0) circle (1pt);
\filldraw[black] (61/30,-0.1) circle (1pt);
\draw[black] (1.7,0.3) -- (1.7,-0.2);
\draw[black] (1.7,0.3) -- (2.7,0.1);
\draw[black] (1.7,-0.2) -- (2.7,0.1);
\draw[black] (1.7,4/30) -- (0,1.5);
\draw[black] (71/30,1/6) -- (0,1.5);
\draw[black] (61/30,-0.1) .. controls (1.7,0.25) .. (0,1.5);
\draw[black] (1.7,-1/30) -- (0,-1.5);
\draw[black] (61/30,7/30) .. controls (1.7,-0.16) .. (0,-1.5);
\draw[black] (71/30,0) -- (0,-1.5);
\draw[black] (1.7,0.3) -- (0,1.5);
\draw[black] (1.7,0.3) -- (0,-1.5);
\draw[black] (1.7,-0.2) -- (0,1.5);
\draw[black] (1.7,-0.2) -- (0,-1.5);
\draw[black] (2.7,0.1) -- (0,1.5);
\draw[black] (2.7,0.1) -- (0,-1.5);
\draw[black] (1,0.3) -- (0,1.5);
\draw[black] (-1,0.3) -- (0,1.5);
\draw[black] (1/3,-0.2) -- (0,1.5);
\draw[black] (-1/3,-0.2) -- (0,1.5);
\draw[black] (-7/9,4/30) -- (0,1.5);
\draw[black] (-1/9,-0.2) -- (0,1.5);
\draw[black] (5/9,-1/30) -- (0,1.5);
\draw[black] (1/3,0.3) -- (0,1.5);
\draw[black] (-1/3,0.3) -- (0,-1.5);
\draw[black] (7/9,4/30) -- (0,-1.5);
\draw[black] (-5/9,-1/30) -- (0,-1.5);
\draw[black] (1/9,-0.2) -- (0,-1.5);
\draw[black] (1,0.3) -- (0,-1.5);
\draw[black] (-1,0.3) -- (0,-1.5);
\draw[black] (1/3,-0.2) -- (0,-1.5);
\draw[black] (-1/3,-0.2) -- (0,-1.5);
\draw[black] (1,0.3) -- (-1,0.3);
\draw[black] (1,0.3) -- (1/3,-0.2);
\draw[black] (-1,0.3) -- (-1/3,-0.2);
\draw[black] (-1/3,-0.2) -- (1/3,-0.2);
\draw[black] (-1.7,0) -- (0,1.5);
\draw[black] (-71/30,0) -- (0,1.5);
\draw[black] (-2.7,0) -- (0,1.5);
\draw[black] (-1.7,0) -- (0,-1.5);
\draw[black] (-61/30,0) -- (0,-1.5);
\draw[black] (-2.7,0) -- (0,-1.5);
\draw[black] (-1.7,0) -- (-2.7,0);
\end{tikzpicture}
\caption{The graph $\Gamma(2,4,3)$.}
\label{Fig:Gamma(2,4,3)}
\end{figure}

For each triple $(p,q,r)$ of integers with $p,q,r\geq 2$, we defined the graph $\Gamma(p,q,r)$ above Corollary \ref{maincor2}. See Figure \ref{Fig:Gamma(2,4,3)} as an example. Indeed, $\Gamma_3$ in Figure \ref{gamma23} is the same as $\Gamma(2,2,2)$. One can construct a maximal SIL-pair system in $\Gamma(p,q,r)$ with three simultaneous shared components such that each component has $p$, $q$, or $r$ vertices defining exactly one subordinate partial conjugation. The other vertices define only trivial partial conjugations in $\Out(A_{\Gamma(p,q,r)})$, and there are no transvections in $\Gamma(p,q,r)$. Consequently, $\mathrm{PSO}(A_{\Gamma(p,q,r)})=\Out^*(A_{\Gamma(p,q,r)})$ is an acylindrically hyperbolic finite index subgroup of $\Out(A_{\Gamma(p,q,r)})$ by Corollary~\ref{beingAHcor}.

\section{Proof of the main corollary}\label{sec:proof-main-cor}

In this section we prove Corollary~\ref{maincor1}. The argument proceeds in two steps. We first establish that $\Gamma$ has no SIL-pair with high probability under the stated hypotheses (Lemma~\ref{lem:no-sil-random} below); we then deduce the failure of acylindrical hyperbolicity by analyzing equivalent and comparable pairs in $\Gamma$ and applying Theorem~\ref{mainthmA}.

\begin{lem}\label{lem:no-sil-random}
Let $\Gamma=\Gamma(n,p)$ and let $q=1-p$.
Assume
\[
p \ \ge\ \frac{\log n+2\log\log n+\omega(n)}{n}\quad\text{with}\quad \omega(n)\to\infty,
\qquad\text{and}\qquad qn\to\infty.
\]
Then $\Gamma$ has no SIL-pair with high probability.
\end{lem}

\begin{proof}
We split into two cases according to $p$. It suffices to show that $\Gamma$ has no SIL-pair with high probability separately in the cases $p\le \tfrac12$ and $p>\tfrac12$.

\smallskip\noindent
\textbf{Case 1: $p\le \tfrac12$.}
We further split according to $pn$.

\smallskip\noindent
\emph{Subcase 1a: $pn\le 10\log n$.}
Then $p\le 10\log n/n$, hence $n^5p^6\to 0$ and
\[
\mathbb P\Big(\exists\, a\neq b:\ |\lk(a)\cap\lk(b)|\ge 3\Big)
\ \le\ \binom{n}{2}\binom{n}{3}p^6
\ =\ O(n^5p^6)\ =\ o(1).
\]
Therefore, with high probability,
\begin{equation}\label{eq:max-common-nei-2}
|\lk(a)\cap\lk(b)|\le 2\quad\text{for all }a\neq b.
\end{equation}
We claim that, under our assumption
\[
p\ge \frac{\log n+2\log\log n+\omega(n)}{n}\qquad\text{with }\omega(n)\to\infty,
\]
the random graph $\Gamma=\Gamma(n,p)$ is $3$--vertex-connected with high probability.

\smallskip\noindent
\emph{Minimum degree.}
We first show that $\delta(\Gamma)\ge 3$ with high probability, where $\delta(\Gamma)=\min_{v\in \Gamma}\deg_\Gamma(v)$ denotes the minimum degree of $\Gamma$.
For $k\in\{0,1,2\}$ let $X_k$ denote the number of vertices of degree $k$ in $\Gamma$.
Since $X_k\ge 0$ is integer-valued, Markov's inequality gives
\[
\mathbb P(X_k\ge 1)\le \mathbb E[X_k].
\]
For a fixed vertex $v$ we have $\deg(v)\sim \mathrm{Bin}(n-1,p)$, hence
\[
\mathbb E[X_k]
= n\,\mathbb P(\deg(v)=k)
= n\binom{n-1}{k}p^k(1-p)^{n-1-k}.
\]
Using $(1-p)^{n-1-k}\le e^{-p(n-1-k)}$ and our assumption $pn\ge \log n+2\log\log n+\omega(n)$, we obtain
\[
\mathbb E[X_0]\le n e^{-p(n-1)} = o(1),\qquad
\mathbb E[X_1]\le n(n-1)p\,e^{-p(n-2)} = o(1),
\]
and
\[
\mathbb E[X_2]\le n\binom{n-1}{2}p^2\,e^{-p(n-3)} = o(1).
\]
Therefore,
\[
\mathbb P(\delta(\Gamma)\le 2)\le \sum_{k=0}^2 \mathbb P(X_k\ge 1)
\le \sum_{k=0}^2 \mathbb E[X_k]=o(1),
\]
so $\delta(\Gamma)\ge 3$ with high probability.

\smallskip\noindent
\emph{From $\delta\ge 3$ to $3$--vertex-connected.}
Let $K_n$ denote the complete graph on $n$ vertices and set $N=\binom{n}{2}=|E(K_n)|$.
Let $(G_t)_{t=0}^N$ be the random graph process obtained by ordering the $N$ edges of $K_n$ uniformly at random and letting $G_t$ consist of the first $t$ edges.
For a graph property $Q$, write
\[
\tau(Q)=\min\{t:G_t \text{ has } Q\}.
\]
Couple $\Gamma=\Gamma(n,p)$ with this process as follows: let $M\sim\mathrm{Bin}(N,p)$ and set $\Gamma:=G_M$.
Then $\Gamma$ has the same distribution as $\Gamma(n,p)$.

By \cite[Chapter~7.2, Theorem~7.4]{bollobas2001random}, asymptotically almost surely
\[
\tau(\delta\ge 3)=\tau(\kappa\ge 3),
\]
Here $\kappa(\Gamma)$ denotes the vertex connectivity of $\Gamma$, i.e.\ the largest integer $k$ such that removing any set of at most $k-1$ vertices keeps $\Gamma$ connected.
On this event, if $\delta(\Gamma)\ge 3$ then $\tau(\delta\ge 3)\le M$, hence $\tau(\kappa\ge 3)\le M$, so $\kappa(\Gamma)\ge 3$.
In particular, removing at most two vertices cannot disconnect $\Gamma$.

Thus, combining with \eqref{eq:max-common-nei-2}, for every nonadjacent $a,b$ the graph $\Gamma-(\lk(a)\cap\lk(b))$ is connected with high probability, and therefore no SIL-pair exists with high probability.

\smallskip\noindent
\emph{Subcase 1b: $pn>10\log n$.}
Fix an ordered pair $(a,b)$ of distinct vertices and let
\[
S=\lk(a)\cap\lk(b),\qquad R=V(\Gamma)\setminus(\{a,b\}\cup S).
\]
Define
\[
A=\{u\in R:\{a,u\}\in E(\Gamma),\{b,u\}\notin E(\Gamma)\},\,
B=\{u\in R:\{b,u\}\in E(\Gamma),\{a,u\}\notin E(\Gamma)\}.
\]
\emph{Claim.} If $A\cup B\neq\varnothing$ and $\Gamma[R]$ is connected, where $\Gamma[R]$ be the induced subgraph on $R$, then $(a,b)$ is not a SIL-pair.

Indeed, if $A\neq\varnothing$ then some vertex of $R$ is adjacent to $a$ but not to $b$, hence lies in the same connected component as $a$ inside $\Gamma-S$; if $\Gamma[R]$ is connected, then all of $R$ lies in that component, so $\Gamma-S$ has no component disjoint from $\{a,b\}$.
The same holds if $B\neq\varnothing$.

Hence,
\[
\mathbb P\big((a,b)\text{ is a SIL-pair}\big)
\ \le\ \mathbb P(A\cup B=\varnothing)\ +\ \mathbb P(\Gamma[R]\text{ disconnected}).
\]

First, for $n$ large enough, we have
\[
\mathbb P(A\cup B=\varnothing)=(p^2+q^2)^{n-2}=(1-2pq)^{n-2}\le e^{-2pq(n-2)}
\le e^{-p(n-2)}\le e^{-pn/2} \le n^{-5},
\]
using $q\ge 1/2$ and $pn>10\log n$.

Next, $|S|\sim \mathrm{Bin}(n-2,p^2)$ with mean $\mu=(n-2)p^2$.
We claim that $\mathbb P(|S|\ge pn)=o(n^{-2})$.
Indeed, we distinguish two cases.

\smallskip\noindent
If $p\ge \tfrac14$, then we have $pn\ge 2\mu$ for $n$ large enough.
By a Chernoff bound,
\[
\mathbb P(|S|\ge pn)\ \le\ \mathbb P(|S|\ge 2\mu)\ \le\ \exp(-c\mu)\ \le\ \exp(-c'n)
\]
for some constants $c,c'>0$, hence $\mathbb P(|S|\ge pn)\le n^{-10}$ for all sufficiently large $n$.

\smallskip\noindent
If $p<\tfrac14$, let $k:=\lceil pn\rceil$ (so $k\ge pn>10\log n$).
Since $|S|\sim \mathrm{Bin}(n-2,p^2)$, the union bound gives
\[
\mathbb P(|S|\ge pn) = \mathbb P(|S|\ge k)
\le \binom{n}{k}(p^2)^k.
\]
Using $\binom{n}{k}\le \left(\frac{en}{k}\right)^k$ and $k\ge pn$, we get
\[
\mathbb P(|S|\ge pn)
\le \left(\frac{en}{k}\right)^k p^{2k}
\le \left(\frac{en}{pn}\right)^k p^{2k}
= (ep)^k.
\]
Since $p<\tfrac14$, we have $ep\le e/4<1$, hence
\[
(ep)^k\ \le\ (e/4)^k\ \le\ (e/4)^{10\log n}
= n^{-10\log(4/e)}=n^{-3.86...}.
\]
In particular, $\mathbb P(|S|\ge pn)=o(n^{-2})$.

On the event $|S|<pn$, we have
\[
|R|=n-2-|S|\ \ge\ n-2-pn\ \ge\ n/2
\qquad(\text{since }p\le 1/2).
\]
Let $m=|R|$. Conditional on $m\ge n/2$ and on the edges incident to $a$ or $b$, the induced graph $\Gamma[R]$ is distributed as $\Gamma(m,p)$.

We bound $\mathbb P(\Gamma(m,p)\text{ disconnected})$ by a cut union bound.
If $\Gamma(m,p)$ is disconnected, then there exists a nonempty $T\subseteq R$ with $|T|\le \lfloor m/2 \rfloor$ such that there are no edges between $T$ and $R\setminus T$.
Thus, using $q=1-p$ and $q\le e^{-p}$,
\[
\mathbb P(\Gamma(m,p)\text{ disconnected})
\le \sum_{k=1}^{\lfloor m/2\rfloor} \binom{m}{k} q^{k(m-k)}
\le \sum_{k=1}^{\lfloor m/2\rfloor} \binom{m}{k} e^{-p k(m-k)}.
\]
For $k=1$, this is at most $m e^{-p(m-1)}\le n e^{p-5\log n}\le e^{1/2}n^{-4}$ since $m\ge n/2$ and $pn>10\log n$.
For $2\le k\le m/2$, we have $m-k\ge m/2$, hence
\[
\binom{m}{k}e^{-p k(m-k)}\le \left(\frac{em}{k}\right)^k e^{-pkm/2},
\]
and
\[
\sum_{k\ge2}\left(\frac{em}{k}\right)^k e^{-pkm/2}
\le \sum_{k\ge2} (en\cdot n^{-2.5})^k
=\frac{(en^{-1.5})^2}{1-en^{-1.5}}
=O(n^{-3}).
\]
Thus, $\mathbb P(\Gamma(m,p)\text{ disconnected})=O(n^{-3})$. Therefore, for each ordered pair $(a,b)$ we have
\[
\mathbb P\big((a,b)\text{ is a SIL-pair}\big)=O(n^{-3}).
\]

A union bound over at most $n(n-1)$ ordered pairs $(a,b)$ shows that with high probability no SIL-pair exists.

\smallskip\noindent
\textbf{Case 2: $p>\tfrac12$.}
Fix an ordered triple of distinct vertices $(a,b,u)$.
Let $E_{a,b,u}$ be the event that
\begin{enumerate}
\item $\{a,b\}\notin E(\Gamma)$;
\item $\{u,a\}\notin E(\Gamma)$ and $\{u,b\}\notin E(\Gamma)$;
\item for every $x\in \Gamma\setminus\{a,b,u\}$, if $x$ is adjacent to exactly one of $a,b$, then $\{u,x\}\notin E(\Gamma)$.
\end{enumerate}
We claim that if $(a,b)$ is a SIL-pair, then $E_{a,b,u}$ holds for some $u$. Indeed, if $C$ is a connected component of $\Gamma-(\lk(a)\cap\lk(b))$ containing neither $a$ nor $b$, pick $u\in C$. Then $u$ is adjacent to neither $a$ nor $b$, and if $u$ had an edge to some vertex $x$ adjacent to exactly one of $a,b$, then $u$ would be connected to $a$ or $b$ in $\Gamma-(\lk(a)\cap\lk(b))$, contradicting $u\in C$.

Now we bound $\mathbb P(E_{a,b,u})$.
The conditions in (1)--(2) contribute a factor $q^3$.
For each $x\notin\{a,b,u\}$, the condition in (3) fails precisely if $x$ is adjacent to exactly one of $a,b$ (probability $2pq$) and $\{u,x\}\in E(\Gamma)$ (probability $p$), so the failure probability is $2p^2q$.
Hence,
\[
\mathbb P(E_{a,b,u}) \ \le\ q^3(1-2p^2q)^{n-3}\ \le\ q^3\exp\!\big(-2p^2q(n-3)\big).
\]
Since $p>\tfrac12$, we have $p^2\ge 1/4$, so
\[
\mathbb P(E_{a,b,u})\ \le\ q^3\exp\!\big(-\tfrac12\,qn\big).
\]
By the union bound over all ordered triples $(a,b,u)$, the probability that some $E_{a,b,u}$ occurs is at most
\[
n(n-1)(n-2)\,q^3\exp\!\big(-\tfrac12\,qn\big)\ \le\ (qn)^3\,\exp\!\big(-\tfrac12\,qn\big)\ \xrightarrow[n\to\infty]{}\ 0,
\]
since $qn\to\infty$ by assumption. By the claim, this implies that there is no SIL-pair with high probability.

Combining Cases 1 and 2 completes the proof.
\end{proof}

\begin{proof}[Proof of Corollary~\ref{maincor1}]
By Lemma~\ref{lem:no-sil-random}, $\Gamma$ has no SIL-pair with high probability.
Work on this event.

By Theorem~\ref{mainthmA}, since $\Gamma$ has no SIL-pair, if $\Out(A_\Gamma)$ were acylindrically hyperbolic then there would exist an equivalence class $\{a,b\}$ (with respect to the link-star equivalence) such that the only ordered comparable pairs $(v,w)$ with $v\le w$ are $(a,b)$ and $(b,a)$.
We show that this necessary uniqueness fails with high probability, splitting into two cases. Write $t=npq$.

\smallskip\noindent
\textbf{Case 1: $t\ge \log n+\log\log n$.}
For distinct vertices $a,b$, we have
\[
\mathbb P(a\sim b)=(p^2+q^2)^{n-2}=(1-2pq)^{n-2}\le \exp\!\big(-2pq(n-2)\big)
= \exp\!\big(-2t(1-2/n)\big).
\]
Hence the expected number of equivalent pairs is at most
\[
\binom{n}{2}\,e^{-2t(1-2/n)}\ \le\ \frac{n^2}{2}\,e^{-2(\log n+\log\log n)(1-2/n)}\ =\ o(1).
\]
By Markov's inequality, with high probability, there is no equivalent pair at all, so the necessary condition from Theorem~\ref{mainthmA} fails. Thus, $\Out(A_\Gamma)$ is not acylindrically hyperbolic.

\smallskip\noindent
\textbf{Case 2: $t<\log n+\log\log n$.}
Note that $pq=t/n=o(1)$. Let $X=\#\{(w,v):w\neq v,\ w\le v\}$ be the number of ordered comparable pairs.

For fixed $w\neq v$, one has
\[
\mathbb P(w\le v)=(1-pq)^{n-2},
\]
since for each $u\neq w,v$ the forbidden pattern ``$\{u,w\}\in E(\Gamma)$ and $\{u,v\}\notin E(\Gamma)$'' has probability $pq$, independently across $u$.
Therefore
\[
\mathbb E[X]=n(n-1)(1-pq)^{n-2}.
\]
Moreover,
\[
(1-pq)^{n-2}=\exp\!\big((n-2)\log(1-t/n)\big)
\ge \exp\!\big(-(n-2)\tfrac{t/n}{1-t/n}\big)
= \exp\!\Big(-\tfrac{t(1-2/n)}{1-t/n}\Big).
\]
Since $t/n\to 0$, the exponent satisfies $\tfrac{t(1-2/n)}{1-t/n}=t+o(1)$, and therefore
\[
(1-pq)^{n-2}\ge e^{-t-o(1)}.
\]
Consequently,
\[
\mathbb E[X]\ge n(n-1)e^{-t-o(1)}.
\]
If $t<\log n+\log\log n$, then $e^{-t}> \frac{1}{n\log n}$, and hence
\[
\mathbb E[X]\ge \frac{n(n-1)}{n\log n}\,e^{-o(1)}\ \longrightarrow\ \infty.
\]

We now show $\mathrm{Var}(X)=o(\mathbb E[X]^2)$.
Write $X=\sum_{w\neq v} I_{w,v}$, where $I_{w,v}$ is the indicator of $w\le v$, i.e. $I_{w,v}=1$ if $w\le v$ and $I_{w,v}=0$ otherwise.
If $(w,v)$ and $(x,y)$ are ordered pairs with four distinct vertices, then a direct computation gives
\[
\mathbb E[I_{w,v}I_{x,y}]
=(p^4+pq+q^4)\,(1-pq)^{2(n-4)}.
\]
Since $pq=o(1)$, one has
\[
\frac{p^4+pq+q^4}{(1-pq)^4}=1+O(pq)=1+o(1),
\]
and hence, the total contribution of disjoint ordered pairs to $\mathbb E[X^2]$ is $(1+o(1))\mathbb E[X]^2$.
The remaining overlap cases involve at most three distinct vertices, hence there are $O(n^3)$ such ordered pairs, each contributing at most $(1-pq)^{2(n-3)}$; therefore, their total contribution is
\[
O\bigl(n^3(1-pq)^{2(n-3)}\bigr)=o\bigl(n^4(1-pq)^{2(n-2)}\bigr)=o(\mathbb E[X]^2).
\]
Consequently, $\mathrm{Var}(X)=o(\mathbb E[X]^2)$. By Chebyshev's inequality, for every $\varepsilon>0$,
\[
\mathbb P\!\left(\left|\frac{X}{\mathbb E[X]}-1\right|\ge \varepsilon\right)
=\mathbb P\!\left(|X-\mathbb E[X]|\ge \varepsilon\,\mathbb E[X]\right)
\le \frac{\mathrm{Var}(X)}{\varepsilon^2\,\mathbb E[X]^2}.
\]
Since $\mathrm{Var}(X)=o(\mathbb E[X]^2)$, the right-hand side tends to $0$, hence $X/\mathbb E[X]\to 1$ in probability.
In particular, $X\to\infty$ with high probability.

Thus, with high probability, there are many ordered relations $w\le v$, so it is impossible that the only ordered comparable pairs are $(a,b)$ and $(b,a)$ for a single equivalence class $\{a,b\}$.
Hence, the necessary condition from Theorem~\ref{mainthmA} fails, and $\Out(A_\Gamma)$ is not acylindrically hyperbolic.

Combining the two cases completes the proof.
\end{proof}
\medskip
\bibliographystyle{alpha}
\bibliography{AH}
\end{document}